\newtheorem{theorem}{Theorem}[section]
\newtheorem{theorem*}{Theorem}
\newtheorem{proposition}[theorem]{Proposition}                                        
\newtheorem{corollary}[theorem]{Corollary} 
\theoremstyle{definition}
\newtheorem{definition}[theorem]{Definition}
\newtheorem{example}[theorem]{Example}
\theoremstyle{remark}
\numberwithin{equation}{section}     
\newcommand{\iso}{\cong} 
\newcommand{\bC}{{\mathbf C}}
\newcommand{\bZ}{{\mathbf Z}}
\DeclareMathOperator{\Tr}{Tr}
\providecommand{\abs}[1]{\lvert#1\rvert}
\providecommand{\Babs}[1]{\Bigl\lvert#1\Bigr\rvert}
\newcommand{\ra}{\rightarrow}                                                                
\begin{document}                                                                             
\title{Polynomial Invariants, Knot Homologies, and Higher Twist Numbers of Weaving Knots $W(3,n)$}
\author{
Rama Mishra
\thanks{We thank the office of the Dean of the College of Arts and Sciences,
  New Mexico State University, for arranging a visiting appointment.}
\\
Department of Mathematics
\\
IISER Pune
\\
Pune, India
\and 
Ross Staffeldt 
\thanks{We thank the office of the Vice-President for Research, New Mexico
  State University, for arranging a visit to IISER-Pune to discuss
  implementation of an MOU between IISER-Pune and NMSU.}
\\Department of Mathematical Sciences\\
New Mexico State University\\
Las Cruces, NM 88003 USA}
\maketitle
\begin{abstract}
We investigate several conjectures in  
geometric topology by assembling computer data obtained by studying  
weaving knots, a doubly infinite family $W(p,n)$ of examples of  
hyperbolic knots. 
 In  particular, we compute some important polynomial knot invariants, as well as knot homologies, for the subclass $W(3,n)$ of this  
 family. 
We use these knot invariants to conclude that all knots $W(3,n)$ are  fibered
knots and provide estimates for some geometric invariants of these
knots.  Finally,  we study the asymptotics of the ranks of their Khovanov homology groups. 
Our investigations provide evidence for our conjecture that, asymptotically as $n$ grows large,
 the ranks of Khovanov homology groups of $W(3,n)$ are normally distributed. 
\end{abstract}
\section{Introduction} \label{Introduction}

Distinguishing knots and links up to ambient isotopy is the central problem in knot theory. 
The recipe that a knot theorist uses is to compute some knot invariants and see if one of them can be of help.
Classically one used  the Alexander polynomial, which has a topological
definition
\cite[p.160]{Rolfsen}, to distinguish knots.
Over the last 35 years tremendous progress has been made in the development of several new knot 
 invariants, starting with the Jones polynomial and the HOMFLY-PT polynomial
\cite{Jones_poly86}.
Recently even more sophisticated invariants such as Heegard-Floer homology groups \cite{Manolescu} 
and Khovanov homology groups \cite{BarNatan_Categorification} have been added to the toolkit.
These homology theories are called {\it categorifications} of polynomial invariants, because 
passage to an appropriate Euler characteristic retrieves a polynomial invariant.
Though these invariants are well understood to a great extent, 
 it is very difficult to compute them for a given knot whose crossing number is only moderately large.
Moreover, some of these invariants yield a very large amount of numerical data, and the problem  arises of 
parsing, or summarizing, the data in a reasonable way.

Since the Alexander polynomial can clearly be related to topology, it is  natural to want to relate the
new invariants to topological or geometric features of knots or links. 
In the 1980s William Thurston's seminal result
\cite[Corollary 2.5]{Thurston82}
 that most knot complements have
the structure of a hyperbolic manifold,
combined with Mostow's rigidity theorem 
\cite[Theorem 3.1]{Thurston82}
giving uniqueness of such structures,
establishes a strong connection between hyperbolic geometry and knot theory, since knots are determined by their complements.
Indeed, any geometric invariant of a knot complement, such as the hyperbolic volume, becomes a topological invariant of the knot.
Thus,  investigating if data derived from the new knot invariants is related to natural differential 
geometric invariants becomes another natural problem.

In this paper, we take up the family of weaving 
knots $W(p,n)$, where $p$ and $n$ are positive integers. We compute the 
signature for the general knot $W(p,n)$, and compute the polynomial knot invariants  for the subfamily $W(3,n)$.  
We explore the two problems just mentioned using our computations.  
In particular, Dasbach and Lin  \cite{DasbachLin} have provided some bounds on the hyperbolic volume of alternating knots 
in terms of coefficients of the Jones polynomial. They also define {\it higher twist numbers} for knots in terms of coefficients
of the Jones polynomials and suggested that these may have some correlation with the hyperbolic volume of the knots. 
We investigate this idea for the $W(3,n)$ knots.  As for parsing the enormous amount of numerical information
yielded by our methods, we explore the approximation of normalized Khovanov homology by normal distributions
from mathematical statistics.  A preprint   \cite{Distributions} developing this idea further is in preparation. 

Let us pause for more explanation of our decision to focus on weaving knots. 
According to \cite{Weaving_vol}, these knots have recently attracted interest, 
because it was conjectured that their complements would have the largest hyperbolic volume for a fixed crossing number.  
Concerning the conjecture about the volume, we cite the following theorem.  
\begin{theorem*}
  [Theorem 1.1, \cite{Weaving_vol}] 
If $p \geq 3$ and $q \geq 7$, then
\begin{equation} \label{CKPbounds}
  v_{{\rm oct}}(p-2)\,q\,\biggl(1 - \frac{(2\pi)^2}{q^2}\biggr)^{3/2} \leq {\rm vol}(W(p,q)) 
             < \bigl(v_{{\rm oct}}(p-3) + 4\,v_{{\rm tet}}) q.
\end{equation}
\end{theorem*}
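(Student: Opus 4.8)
The plan is to treat the two inequalities by entirely different methods: the upper bound is a combinatorial consequence of the alternating polyhedral decomposition, while the lower bound is an analytic estimate coming from hyperbolic Dehn filling.

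\textbf{Upper bound.} Since $W(p,q)$ is a prime alternating link, I would begin from the Menasco--Thurston ideal polyhedral decomposition of the complement induced by its alternating diagram, which here is the $q$-periodic closed-braid projection of $(\sigma_1\cdots\sigma_{p-1})^q$ with $(p-1)q$ crossings. Cutting along the projection sphere produces finitely many ideal polyhedra whose combinatorics are read off from the checkerboard graph. The volume of any ideal hyperbolic polyhedron is at most that of its regular (maximal-volume) representative, and the regular ideal octahedron and tetrahedron have volumes $v_{\mathrm{oct}}$ and $v_{\mathrm{tet}}$. The argument then reduces to a bookkeeping count: each of the $q$ identical columns of the diagram contributes $(p-3)$ octahedral pieces (from interior crossings) and $4$ tetrahedral pieces (from the two boundary strands of the braid), so the total volume is strictly less than $(v_{\mathrm{oct}}(p-3)+4v_{\mathrm{tet}})q$. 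Strictness follows because $W(p,q)$ is genuinely hyperbolic, so the true dihedral angles cannot be simultaneously those of the regular pieces.

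\textbf{Lower bound.} Here I would first identify the geometric limit. Let $M_p$ be the complement of the singly periodic $p$-strand weave; a combinatorial verification shows $M_p$ carries a complete structure decomposing into $(p-2)$ regular ideal octahedra per period, so its $q$-fold cyclic cover $M_p^{(q)}$ is hyperbolic with $\mathrm{vol}(M_p^{(q)}) = v_{\mathrm{oct}}(p-2)q$. The essential step is to realize $W(p,q)$ as a Dehn filling of $M_p^{(q)}$ along a single torus cusp, where the $q$-fold periodicity forces the normalized length $\ell$ of the filling slope to satisfy $\ell \geq q$. Once $q \geq 7$ we have $\ell \geq q > 2\pi$, so I may invoke the negatively curved cone-deformation volume estimate of Hodgson--Kerckhoff, in the packaged form due to Futer--Kalfagianni--Purcell: filling a slope of normalized length $\ell \geq 2\pi$ changes volume by at most the factor $(1-(2\pi/\ell)^2)^{3/2}$, so
\[
  \mathrm{vol}(W(p,q)) \;\geq\; \Bigl(1-(2\pi/\ell)^2\Bigr)^{3/2}\mathrm{vol}(M_p^{(q)}) \;\geq\; \Bigl(1-(2\pi/q)^2\Bigr)^{3/2} v_{\mathrm{oct}}(p-2)q,
\]
which is the claimed bound; the hypothesis $p \geq 3$ is what guarantees hyperbolicity and nondegeneracy of the octahedral decomposition.

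\textbf{Main obstacle.} I expect the lower bound to be the crux, and within it two precise computations. First, one must verify that the limiting gluing genuinely realizes \emph{regular} ideal octahedra, with exactly $(p-2)$ per period; this means checking that Thurston's gluing and completeness equations are solved with all dihedral angles equal to $\pi/2$, i.e.\ that the relevant structure is the complete hyperbolic one rather than merely an angle structure. Second, one must pin the normalized length of the filling slope down to $\ell \geq q$ (not merely $\ell \asymp q$), since the exact constant is what makes the factor emerge as $(1-(2\pi/q)^2)^{3/2}$ and what makes $q \geq 7$ the sharp threshold $q > 2\pi$. A subsidiary technical point is to justify the geometric convergence $W(p,q) \to M_p$ and to control the tube radius so that the cone-manifold deformation remains nonsingular throughout the filling; the quantitative hypothesis $q \geq 7$ is precisely the input that makes this work.
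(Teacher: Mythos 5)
A preliminary remark: the paper you were given does not prove this statement at all --- it is quoted, with attribution, from Champanerkar--Kofman--Purcell \cite{Weaving_vol} purely as motivation, so the only meaningful comparison is with the published proof in \cite{Weaving_vol}. Measured against that, your proposal reconstructs the architecture remarkably well: the published argument also drills the braid axis $B$, uses the fact that $S^3\setminus(W(p,q)\cup B)$ is the $q$-fold cyclic cover of $S^3\setminus(W(p,1)\cup B)$ (this cover is your $M_p^{(q)}$), bounds the per-period volume from below by $(p-2)v_{\rm oct}$ and from above by $(p-3)v_{\rm oct}+4v_{\rm tet}$, recovers $W(p,q)$ by Dehn filling the axis cusp, and applies the Futer--Kalfagianni--Purcell volume theorem, with $q\geq 7$ playing exactly the role you assign it, namely $q>2\pi$.

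That said, there are two genuine gaps, and the first is fatal as written. You claim the $q$-fold periodicity forces the \emph{normalized} length of the filling slope to satisfy $\ell\geq q$. It does not: under the $q$-fold cyclic cover the meridian of the axis unrolls $q$ times, but the cusp area is also multiplied by $q$, so normalized length (length divided by the square root of cusp area) grows like $\sqrt{q}$, not $q$. Run honestly, your estimate yields a factor of the form $\bigl(1-4\pi^2/(c_p^2\,q)\bigr)^{3/2}$, with $c_p$ a constant of the base manifold, which is weaker than and does not converge to the claimed $\bigl(1-(2\pi)^2/q^2\bigr)^{3/2}$. The quantity that does scale like $q$ is the \emph{unnormalized} slope length measured on an embedded horoball neighborhood, and that is the hypothesis in the theorem of Futer, Kalfagianni, and Purcell (slope length greater than $2\pi$ on disjoint embedded horoball cusps): pull back a maximal cusp of $S^3\setminus(W(p,1)\cup B)$, on which every slope has length at least $1$; the pulled-back cusp is embedded and the meridian on it has length at least $q>2\pi$ once $q\geq 7$. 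Second, your geometric input --- that the strip complement $M_p$ carries a complete structure decomposing into $p-2$ \emph{regular} ideal octahedra per period, so that ${\rm vol}(M_p^{(q)})=(p-2)\,v_{\rm oct}\,q$ exactly --- is precisely what cannot be assumed, and it very likely fails for $p\geq 4$: it is the biperiodic infinite weave whose complement is right-angled octahedral, while the singly periodic strip suffers boundary effects from the two outermost strands. What the published proof establishes is only the one-sided inequality ${\rm vol}\geq (p-2)v_{\rm oct}$ per period. Indeed, if your exact claim held, Thurston's strict volume decrease under Dehn filling would immediately give the upper bound $(p-2)v_{\rm oct}\,q$, which is strictly smaller than the stated $\bigl(v_{\rm oct}(p-3)+4v_{\rm tet}\bigr)q$; that \cite{Weaving_vol} asserts only the weaker upper bound is indirect evidence that the per-period volume strictly exceeds $(p-2)v_{\rm oct}$. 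A smaller issue of the same kind: your per-column count of $(p-3)$ octahedra and $4$ tetrahedra for the upper bound has the right totals but is asserted rather than derived, and the attribution is off --- the Menasco checkerboard decomposition consists of two polyhedra in total, while the crossing-by-crossing octahedra come from D.~Thurston's decomposition.
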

Here $v_{{\rm oct}}$ and $v_{{\rm tet}}$ denote the volumes of the ideal octahedron and ideal tetrahedron respectively.
Champanerkar, Kofman, and Purcell call these bounds asymptotically sharp because their ratio approaches 1, 
as $p$ and $q$ tend to infinity.  Since the crossing number of $W(p,q)$ is known to be $(p{-}1)q$, the volume 
bounds in the theorem imply
\begin{equation*}
  \lim_{p,q \to \infty}\frac{{\rm vol}(W(p,q))}{c(W(p,q))} = v_{{\rm oct}} \approx 3.66
\end{equation*}
Their study raises the general question of examining the asymptotic behaviour of  other
invariants of weaving knots.  We also investigate the efficiency of the upper and lower bounds given in the theorem
for weaving knots $W(3,n)$.  

Turning to practical matters, here is the weaving knot $W(3,4)$.
\begin{figure}[h!]  
  \begin{equation*}
  \xygraph{  !{0;/r1.5pc/:}
!{\hcap}[u]
!{\hcap[3]}[u]
!{\hcap[5]}[llllllll]
!{\xcaph[-8]@(0)}[dl]
!{\xcaph[-8]@(0)}[dl]
!{\xcaph[-8]@(0)}[uul]
!{\hcap[-5]}[d]
!{\hcap[-3]}[d]
!{\hcap[-1]}[d]
!{\xcaph[1]@(0)}[dl]
!{\htwist}[d]
!{\xcaph[1]@(0)}[uul]
!{\htwistneg}
!{\xcaph[1]@(0)}[dl]
!{\htwist}[d]
!{\xcaph[1]@(0)}[uul]
!{\htwistneg}
!{\xcaph[1]@(0)}[dl]
!{\htwist}[d]
!{\xcaph[1]@(0)}[uul]
!{\htwistneg}
!{\xcaph[1]@(0)}[dl]
!{\htwist}[d]
!{\xcaph[1]@(0)}[uul]
!{\htwistneg}
}    
  \end{equation*} \vspace{-8em}
\end{figure} 
\newline 
Enumerating strands $1, \ldots, p$ from the outside inward, our example is
the closure of the braid $(\sigma_1 \sigma_2^{-1})^4$ on three strands.
The weaving knot $W(p,n)$ is obtained from the torus knot $T(p,n)$
by making a standard diagram of the torus knot alternating. Symbolically, $T(p,n)$ is the
closure of the braid $(\sigma_1 \sigma_2 \cdots \sigma_{p-1})^n$, and
$W(p,n)$ is the closure of the braid $(\sigma_1 \sigma_2^{-1} \cdots \sigma_{p-1}^{\pm 1})^n$.
Obviously, the parity of $p$ is important.  If the greatest common divisor $\gcd(p,q) > 1$, 
then $T(p,n)$ and $W(p,n)$ are both links with $\gcd(p,n)$ components.  In general we
are interested only in the cases when $W(p,n)$ is an actual knot.  
The first invariant that we compute for $W(p,n)$ is the signature $\sigma(W(p,n))$ 
using a combinatorial method useful for alternating knot diagrams discussed 
in \cite{Lee_Endo04}.  We then  focus on the knots $W(3,n)$ which are closures of $3$-strand braids. 

From early in the development of Khovanov homology, computer experimentation has played a role in advancing the theory.
For example, \cite{BarNatan_Categorification} provided {\em Mathematica} routines to
compute Khovanov homology of knots of up to 10 crossings and provided tables of Betti numbers.  Based on the computations,
he makes a number of observations and conjectures about the structure of Khovanov homology. 
Subsequently, \cite{Khovanov_Patterns} recorded some observations about patterns in Khovanov homology and a remarkable 
relationship between the volume of a knot complement and the total rank of the knot's Khovanov homology. Some of the
conjectures on patterns are proved in \cite{Lee_Endo04}, on which our results depend.
In this paper we start a study of the asymptotic behavior of Khovanov homology of weaving knots.  With the assistance
of the computer algebra system {\em Maple}, we provide data on
the Khovanov homology of weaving knots $W(3,n)$ with up to 652 crossings.  The Khovanov homology groups are truly enormous,
so we approximate the distribution of dimensions using probability  density functions. We find that
normal distributions fit the data surprisingly well.

In more detail, this paper is organized as follows.
In section \ref{Weaving}, we discuss the generalities of weave knots and compute the signature $\sigma(W(p,n))$. 
We also make observation on Rasmussen's invariant \cite{Rasmussen} for these knots.
In section \ref{Hecke} we prepare to  follow the development of polynomial invariants in \cite{Jones_poly86}, starting from 
representations of braid groups into Hecke algebras.  For weaving knots $W(3,n)$, which are naturally represented as the closures 
of braids on three strands, we develop recursive formulas for their representations in the Hecke algebras. 
We note that the  survey \cite{BirmanMenasco} collects a number of facts and tools to facilitate computations
of invariants of knots and links that are the closures of 3-strand braids, as well as classifying the prime knots that are 
closure of 3-strand braids but not of 2-strand braids.  We would also like to point out that 
\cite{Takahashi} studies the Jones polynomials of knots that are the closures of general 3-strand braids, 
but the method is based on the  skein relation satisfied by the Jones polynomial. 
Our formulas are used not only 
in computer calculations of the polynomial invariants we need, but also in the development of information about particular
coefficients of these polynomials.  

Section \ref{Polys} builds on the recursion formulas 
to develop information about the Alexander polynomial $\Delta_{W(3,n)}(t)$ and Jones polynomial $V_{W(3,n)}(t)$.
As an application we exploit the relation between the Heegard-Floer homology associated to an alternating knot
and its Alexander polynomial  given in \cite[Theorem 1.3]{OSFloer},
as well as properties of Heegard-Floer homology,
 to prove that the complements of the knots $W(3,n)$ are fibered over $S^1$. 

In section \ref{TwistNumbersVolume} we investigate for knots $W(3,n)$ the higher twist numbers defined by Dasbach and Lin 
in \cite{DasbachLin} in terms of the Jones polynomial.  They ask if there is a correlation between the values of the higher
twist numbers and the hyperbolic volume of the knot complements.   As new results we have formulas for the second and 
third twist numbers of the knots $W(3,n)$, as well as conjectures for even higher twist numbers.  We believe that
improved lower bounds on the volume of knots $W(3,n)$ can be derived from the higher twist numbers, displaying 
results of computer experiments to support this idea.   We also exhibit plots of higher twist numbers against volume
that support the idea that values of higher twist numbers are correlated with volume.  

 In section \ref{Jones-to-Khovanov} we explain how to obtain the two-variable Poincar\'{e} polynomial for Khovanov homology 
with rational coefficients, and 
we present the results of calculations in a few relatively small examples.  
Using recent results of Shumakovitch \cite{Khovanovtorsion} that explain how, for alternating knots, the rational  Khovanov homology
determines the integral Khovanov homology, we provide a display of the integral Khovanov homology of the knot $W(3,4)$.
Concerning rational Khovanov homology, we observe that the distributions of 
dimensions in Khovanov homology resemble normal distributions.  We explore this further in section \ref{Data}, where we present
tables displaying summaries of calculations for weaving knots $W(3,n)$ for selected values of $n$ satisfying $\gcd(3, n) = 1$ 
and ranging up to $n=326$.  The standard deviation $\sigma$ of the normal distribution we attach to the Khovanov homology
of a weaving knot is a significant parameter.  The geometric significance of this number is an open question. 

In section \ref{PolynomialsExtra} we display expressions for a few polynomials arising from the Hecke algebra representations
of braid representations of $W(3,n)$, as they are used in section \ref{TwistNumbersVolume}, and values of the Jones polynomial,
Alexander polynomial and HOMFLY-PT polynomial for knots $W(3,4)$, $W(3,5)$, $W(3,10)$ and $W(3, 11)$. Finally, in section \ref{ComputerNotes}
we provide some information about the computer experiments we have performed with the knots $W(3,n)$ and how the experimental
results influenced the formulations of propositions and theorems. 

Finally, we thank several colleagues, especially Dr.~Joan Birman, for advice and suggestions concerning the exposition.
\section{Generalities on Weaving knots} \label{Weaving}
We have already mentioned that weaving knots are alternating by definition.
Various facts about alternating knots facilitate our calculations of the Khovanov homology of weaving knots $W(3,n)$.
For example, we appeal first to the following theorem of Lee.
\begin{theorem}[Theorem 1.2, \cite{Lee_Endo04}]  \label{locateKHLee}
For any alternating knot $L$ the Khovanov invariants
${\mathcal H}^{i,j}(L)$ 
are supported in two lines
\begin{equation*}
  j = 2i -\sigma(L) \pm 1.  \qed
\end{equation*}
 \end{theorem}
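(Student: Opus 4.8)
The plan is to prove the statement by establishing the stronger phenomenon of \emph{homological thinness}: I would show that the \emph{reduced} Khovanov homology $\widetilde{\mathcal{H}}^{i,j}(L)$ of a non-split alternating link is supported on the single diagonal $j - 2i = -\sigma(L)$, and then deduce the unreduced two-diagonal statement. Over $\bQ$ the reduced theory is the natural object: the reduced and unreduced groups are tied together by the reduced--unreduced long exact sequence, so single-diagonal support for $\widetilde{\mathcal{H}}$ forces support of $\mathcal{H}^{i,j}(L)$ on the two adjacent lines $j = 2i - \sigma(L) \pm 1$. The quantity to track throughout is the diagonal grading $\delta = j - 2i$, and thinness is exactly the assertion that $\delta$ takes a single value on $\widetilde{\mathcal{H}}$.

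The engine of the proof is an induction on the crossing number of a reduced alternating diagram $D$, driven by the unoriented skein long exact sequence. Choosing a crossing of $D$, the $0$- and $1$-smoothings produce diagrams $D_0$ and $D_1$ that sit in a long exact sequence together with $\mathcal{H}(D)$, each entering with homological and quantum grading shifts determined by the sign of the crossing. For a reduced alternating diagram one smoothing is again reduced alternating with one fewer crossing, while the other, after removing any nugatory crossings by Reidemeister~I, reduces to a reduced alternating diagram with strictly fewer crossings (the split-union and connected-sum cases being handled separately). The induction hypothesis places each of $\widetilde{\mathcal{H}}(D_0)$ and $\widetilde{\mathcal{H}}(D_1)$ on a single $\delta$-diagonal, and exactness then confines $\widetilde{\mathcal{H}}(D)$ to at most two diagonals a priori.

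The heart of the matter is to show that these two candidate diagonals in fact coincide, that is, that the connecting homomorphisms are forced to be injective or surjective in precisely the degrees where the diagonals of $D_0$ and $D_1$ would otherwise disagree, collapsing the support onto the single line $\delta = -\sigma(D)$. To pin this line to the signature I would anchor the induction at the unknot, where $\sigma = 0$ and $\widetilde{\mathcal{H}}$ is $\bQ$ in bidegree $(0,0)$, so that $\delta = 0 = -\sigma$, and then propagate the equality through the exact sequence using a diagrammatic formula for the signature of an alternating link, for instance a Gordon--Litherland or Murasugi-type expression in terms of the writhe and the Seifert or Goeritz data of $D$. The grading shifts in the skein sequence, combined with how $\sigma$ and the writhe change under the two smoothings, must match up exactly so that the single diagonal is preserved at each stage.

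I expect the main obstacle to be this last bookkeeping step rather than the homological algebra. One must verify that the grading shifts in the skein exact sequence, the change of signature under resolving a crossing of an alternating diagram, and the change in the number of Seifert circles all conspire to carry the diagonal $\delta = -\sigma(D_k)$ of each resolution onto the \emph{same} diagonal $\delta = -\sigma(D)$ of $D$, leaving no room for a second diagonal in the reduced theory. The subsidiary technical points---enforcing the non-split hypothesis, treating the smoothing that disconnects the diagram, and the reduction by nugatory crossings---are routine but must be arranged so that the induction hypothesis genuinely applies to smaller reduced alternating diagrams throughout.
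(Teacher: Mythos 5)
Your proposal cannot be matched against an internal argument, because the paper does not prove Theorem~\ref{locateKHLee} at all: it is quoted, with attribution, as Theorem~1.2 of \cite{Lee_Endo04}, and is used downstream (Theorem~\ref{locateKH}, Section~\ref{Jones-to-Khovanov}) purely as a black box. The meaningful benchmark is therefore Lee's published proof, and your outline is in essence a reconstruction of it: Lee argues by induction on the crossing number of a reduced non-split alternating (link) diagram, using the skein long exact sequence for Khovanov homology together with the combinatorial signature formula $\sigma(L)=o(D)-y(D)-1$ --- the very formula this paper imports as Proposition~\ref{basic_signature} --- to control the diagonal grading $\delta=j-2i$. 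Your variant of first establishing one-diagonal support (``thinness'') for \emph{reduced} Khovanov homology of non-split alternating links and then deducing the unreduced two-line statement is also a known route (it is the shape of the later Manolescu--Ozsv\'ath treatment of quasi-alternating links), and that final deduction is sound: over $\bQ$ the unreduced theory is two quantum-shifted copies of the reduced one, so single-diagonal reduced support forces support on $j=2i-\sigma(L)\pm 1$.

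There is, however, a genuine gap at exactly the step you call the heart of the matter, and you mis-describe the mechanism there. You propose to collapse the two candidate diagonals by showing the connecting homomorphisms are ``forced to be injective or surjective'' in the offending degrees; nothing in the exact sequence forces such behavior, and no argument along those lines exists. The actual mechanism is purely arithmetic and involves no analysis of connecting maps: the $\delta$-shifts with which $\widetilde{\mathcal H}(D_0)$ and $\widetilde{\mathcal H}(D_1)$ enter the sequence are determined by the crossing sign and by the number of negative crossings created when the unoriented resolution is re-oriented, and the induction closes because the signatures $\sigma(L_0)$ and $\sigma(L_1)$ differ from $\sigma(L)$ by precisely the amounts that cancel those shifts, so that \emph{both} resolutions land on the single diagonal $\delta=-\sigma(L)$ and exactness alone confines $\widetilde{\mathcal H}(L)$ to it. These signature-change lemmas --- proved by applying Proposition~\ref{basic_signature} to $D$, $D_0$, and $D_1$ after removing nugatory crossings, which is where reducedness and non-splitness are genuinely used --- are not deferrable ``bookkeeping''; they are the entire mathematical content of Lee's induction, and your proposal contains neither their statements nor their proofs. (In the quasi-alternating generalization, where no alternating diagram is available, the analogous role is played by the determinant additivity $\det(L)=\det(L_0)+\det(L_1)$, which pins down the signature changes; for alternating links the diagrammatic formula suffices.) As written, then, your plan identifies the correct strategy but omits the decisive lemma and, if followed literally at the connecting-map step, would stall.
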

We will see that this result also has several practical implications.   
For example, to obtain a vanishing result for a particular alternating
knot, it suffices to compute the signature.  
Likewise, in connection with Heegard-Floer homology for $W(p,n)$,
\cite[Theorem 1.3]{OSFloer}
 essentially says that Heegard-Floer homology for an alternating knot is completely determined 
by the coefficients of its Alexander polynomial and the signature.   

Thus, it is important to compute the signature.
 Indeed, it turns out that there is a combinatorial formula for the
signature of oriented non-split alternating links. To state the formula requires the following terminology.
\begin{definition} \label{crossings}
  For a link diagram $D$ let $c(D)$ be the number of crossings of $D$, let $x(D)$ be number of
negative crossings, and let $y(D)$ be the number of positive crossings. For an
oriented link diagram, let $o(D)$ be the number
of components of $D(\emptyset)$, the diagram obtained by $A$-smoothing every crossing.
\end{definition}
\vspace{-3em}
\begin{figure}[h!]
  \begin{minipage}[h!]{0.5\linewidth} 
  \centering
\begin{equation*}
\UseComputerModernTips  \xygraph{ !{0;/r3pc/:} 
!{\htwist=>}[rr] 
!{\htwistneg=>}[rr]
}
\end{equation*}   
\caption{Positive and negative crossings}
  \end{minipage}
 \begin{minipage}[h!]{0.5\linewidth}
  \centering
\begin{equation*}
\UseComputerModernTips  \xygraph{ !{0;/r3pc/:} 
!{\huntwist}[rr]
!{\vuntwist}
}
\end{equation*}   
\caption{$A$-smoothing a positive, resp., negative, crossing}
  \end{minipage}
   \label{fig:crossings_smoothings}
\end{figure}
In words, $A$-regions in a neighborhood of a crossing are the regions swept out as the upper strand sweeps
counter-clockwise toward the lower strand.  An $A$-smoothing removes the crossing to connect these regions.
With these definitions,  we may cite the following proposition.
\begin{proposition}[Proposition 3.11, \cite{Lee_Endo04}]  
\label{basic_signature} 
For an oriented non-split alternating link $L$ and a reduced alternating diagram $D$ of $L$, 
$\sigma(L) = o(D) - y(D) -1$. \qed
\end{proposition}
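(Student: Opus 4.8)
The plan is to derive the formula from the Gordon--Litherland signature formula applied to a checkerboard spanning surface of the reduced alternating diagram $D$, using the classical fact that the associated symmetric form is definite for alternating diagrams. First I would checkerboard-color $D$, choosing the coloring (say white) so that at each crossing the $A$-smoothing opens the channel between the two white regions; since $D$ is alternating this convention is globally consistent. Under this convention the loops of the all-$A$ state $D(\emptyset)$ are in bijection with the white regions, so $o(D)$ equals the number $W$ of white regions. Euler's formula for the connected $4$-valent planar graph underlying $D$ gives $W + B = c(D) + 2$, where $B$ is the number of black regions; here connectedness is guaranteed because $L$ is non-split.

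Next I would pass to the black spanning surface $F$ obtained by plumbing half-twisted bands at the crossings onto the black regions. An Euler-characteristic count gives $\chi(F) = B - c(D)$, hence $b_1(F) = 1 - \chi(F) = c(D) - B + 1 = W - 1 = o(D) - 1$, so the Gordon--Litherland form $\mathcal{G}_F$ on $H_1(F)$ has rank $o(D) - 1$. This form is represented by the Goeritz matrix of the white regions, which is, up to sign, the reduced Laplacian of the Tait graph $\Gamma$ whose vertices are the white regions and whose edges are the crossings. The crucial point is that, because $D$ is alternating, every edge of $\Gamma$ carries the same sign, so this matrix is the reduced Laplacian of an ordinary connected weighted graph and is therefore definite; reducedness of $D$ (no nugatory crossings) ensures $\Gamma$ has no loops and the rank count is clean. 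With the sign conventions fixed as above, $\mathcal{G}_F$ is positive definite, so $\operatorname{sig}(\mathcal{G}_F) = o(D) - 1$.

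It then remains to evaluate the Gordon--Litherland correction term $\mu(F)$ in the identity $\sigma(L) = \operatorname{sig}(\mathcal{G}_F) - \mu(F)$. Here I would use that, relative to $F$, the crossings of an alternating diagram all have the same Gordon--Litherland type, so that $\mu(F)$ collapses to a signed count of crossings; tracking the incidence numbers against the orientation of $L$, this signed count should equal precisely the number of positive crossings, $\mu(F) = y(D)$. Substituting gives $\sigma(L) = (o(D) - 1) - y(D) = o(D) - y(D) - 1$, as claimed. Sanity checks on the positive and negative trefoils and on the Hopf link confirm the normalization.

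The main obstacle is the sign bookkeeping, and it enters at several places that must be made mutually consistent: the choice of which color the $A$-smoothing opens (fixing $o(D) = W$ rather than $B$), the resulting sign of definiteness of $\mathcal{G}_F$, and the sign of each crossing's contribution to $\mu(F)$. Establishing the definiteness of the Tait-graph Laplacian is the genuine mathematical content, and it is exactly where the alternating and non-split hypotheses are used; by contrast, verifying that $\mu(F)$ equals $y(D)$ exactly, rather than the writhe $y(D) - x(D)$ or $x(D)$, is a delicate but purely local computation at a single crossing. I would pin all the conventions down on a minimal example before committing to them globally.
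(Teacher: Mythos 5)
The paper does not actually prove this proposition: it is quoted, tombstone and all, from Proposition 3.11 of \cite{Lee_Endo04}, so there is no internal argument to compare yours against. Your chosen route --- the Gordon--Litherland identity $\sigma(L)=\operatorname{sig}(\mathcal{G}_F)-\mu(F)$ for a checkerboard surface, definiteness of the Goeritz form of a reduced alternating diagram via the Tait-graph Laplacian, and evaluation of the correction term as a crossing count --- is the standard way this formula is established, so your strategy is the right one.

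However, your very first identification is backwards, and the error propagates to a formula that contradicts the proposition. If the coloring is chosen so that the $A$-smoothing opens the channel between the two \emph{white} quadrants at every crossing, then the loops of $D(\emptyset)$ are in bijection with the \emph{black} regions, not the white ones: the two smoothing arcs at a crossing cut off the corners of the two black quadrants, so each black region becomes a disk bounded by exactly one state circle, while all white regions fuse into a single region through the channels; hence $o(D)=B$. The positive trefoil, the closure of $\sigma_1^3$, makes this concrete: the $A$-smoothing of each positive crossing is the braid-identity smoothing, so $o(D)=2$, whereas the channel-connected class (the three between-strand regions) has $W=3$. Since $W+B=c(D)+2$, your chain of equalities actually gives $\operatorname{sig}(\mathcal{G}_F)=W-1=c(D)+1-o(D)$, and the Gordon--Litherland identity would then return $\sigma(L)=c(D)+1-o(D)-y(D)$, which on the positive trefoil equals $3+1-2-3=-1$ rather than $\sigma=-2$. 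So the sanity check you invoke refutes, rather than confirms, your conventions; note that the Hopf link and the figure-eight knot cannot detect the mistake, because for those diagrams $W=B$, and the trefoil is precisely the example that does. The repair is to build the spanning surface from the channel-connected (white) regions, so that the relevant Gordon--Litherland form is the Goeritz form of the black, i.e.\ loop, class, whose rank is $B-1=o(D)-1$; after that swap, your Laplacian-definiteness argument and the local verification $\mu(F)=y(D)$ are indeed the two remaining steps, but all of the deferred sign bookkeeping must be redone relative to the corrected surface, since it was calibrated against the wrong one.
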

We now use this result to compute the signatures of weaving knots. 
For a knot or link $W(m, n)$ drawn in the usual way,  the number of crossings $c(D) = (m{-}1)n$.
In particular, for $W(2k{+}1,n)$, $c\bigl(W(2k{+}1,n)\bigr)= 2kn$;   for $W(2k, n)$, $c\bigl( W(2k, n) \bigr) = (2k{-}1)n$. 
Evaluating the other quantities in definition \ref{crossings}, we calculate the signatures of weaving knots. 
\begin{proposition} \label{weavingsignature}
  For a weaving knot $W(2k{+}1,n)$, $\sigma\bigl( W(2k{+}1,n) \bigr) = 0$,  
and for $W(2k, n)$,   $\sigma\bigl( W(2k, n) \bigr) =  -n{+}1$.
\end{proposition}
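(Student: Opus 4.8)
The plan is to apply Proposition~\ref{basic_signature}, $\sigma(L) = o(D) - y(D) - 1$, to the standard reduced alternating diagram $D$ of the closed braid $(\sigma_1\sigma_2^{-1}\cdots\sigma_{p-1}^{\pm 1})^n$; this is legitimate since the weaving diagram is reduced, alternating, and (whenever $W(p,n)$ is a knot) non-split. The whole problem then reduces to evaluating the two quantities $y(D)$ and $o(D)$.

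First I would pin down $y(D)$. Orienting every strand of the closed braid downward makes the sign of each crossing equal to the sign of its generator, and I fix the convention that $\sigma_i$ is a positive crossing, checking it against $W(2,3) = T(2,3)$, the right-handed trefoil, whose signature is $-2$. The positive crossings are then exactly those coming from the odd-indexed generators $\sigma_1, \sigma_3, \ldots$, of which each period $\sigma_1\sigma_2^{-1}\cdots\sigma_{p-1}^{\pm1}$ contains $k$, whether $p = 2k+1$ or $p = 2k$. Hence $y(D) = kn$ in both cases.

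The heart of the argument is $o(D)$, the number of circles in the all-$A$-smoothing $D(\emptyset)$. With the convention above, the $A$-smoothing replaces each positive crossing $\sigma_i$ by the oriented (identity) smoothing and each negative crossing $\sigma_i^{-1}$ by the cap-cup smoothing $e_i$ (the tangle joining strands $i$ and $i{+}1$ by a cap above and a cup below) --- a local fact to read off from Figure~\ref{fig:crossings_smoothings} and to cross-check on $T(2,n)$, whose all-$A$-state is the pair of Seifert circles. Consequently $D(\emptyset)$ is the trace closure of $(e_2 e_4 \cdots)^n$: within one period the identity smoothings vanish and the disjoint cap-cups survive, leaving $e_2 e_4 \cdots e_{2k}$ (one through-strand and $k$ cap-cup pairs) when $p = 2k+1$, and $e_2 e_4 \cdots e_{2k-2}$ (two through-strands and $k-1$ cap-cup pairs) when $p = 2k$. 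I would then count circles by stacking the $n$ periods: each of the $n-1$ internal junctions of consecutive periods closes off one circle per cap-cup pair (a cup meeting a cap), while the trace-closure arcs turn each through-strand and each surviving top-level cap-cup pair into one further circle. For $p = 2k+1$ this gives $o(D) = (n-1)k + (k+1) = kn + 1$, whence $\sigma = (kn+1) - kn - 1 = 0$; for $p = 2k$ it gives $o(D) = (n-1)(k-1) + (k+1) = (k-1)n + 2$, whence $\sigma = ((k-1)n+2) - kn - 1 = -n+1$.

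The main obstacle is carrying out this circle count honestly rather than heuristically: one must verify that consecutive periods really meet cup-to-cap so that exactly one circle is born at each internal junction per pair, and one must handle the trace-closure arcs with care, since it is precisely the number of through-strands --- one for odd $p$, two for even $p$ --- that produces the $+1$ versus $+2$ and thereby the qualitative difference between the two signature formulas. I would guard against off-by-one errors by checking the base cases $W(3,n)$ directly (giving $o(D) = n+1$ and $\sigma = 0$) and $W(2,n) = T(2,n)$ (giving $o(D) = 2$ and $\sigma = -n+1$, in agreement with the known torus-knot signature).
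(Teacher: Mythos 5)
Your proposal is correct and follows essentially the same route as the paper: both apply Lee's formula $\sigma(L) = o(D) - y(D) - 1$ to the standard reduced alternating closed-braid diagram, count $y(D) = kn$ positive crossings in both parity cases, and establish $o(D) = kn+1$ for $p = 2k{+}1$ and $o(D) = (k{-}1)n + 2$ for $p = 2k$. The only difference is bookkeeping: you count the circles of the all-$A$ state by stacking the $n$ periods of the Temperley--Lieb word $e_2 e_4 \cdots$ and tracking junctions and closure arcs, while the paper counts the same circles ring-by-ring in the annular picture (one bounding circle from the outer ring, a cyclic chain of $n$ circles per even-numbered ring, and an extra inner bounding circle when $p$ is even) --- the two counts agree term for term.
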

\begin{corollary}
  For a weaving knot $W(2k{+}1, n)$, the Rasmussen $s$-invariant is zero.  For
  a weaving knot $W(2k, n)$, the Rasmussen $s$-invariant is $-n{+}1$.
\end{corollary}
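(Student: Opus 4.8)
The plan is to read the corollary off directly from Proposition~\ref{weavingsignature}, using the fact that for alternating knots the Rasmussen $s$-invariant is governed entirely by the signature. Recall from the opening of this section that every weaving knot is alternating by construction, so any statement about alternating knots applies to all $W(m,n)$ that are genuine knots.

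The one substantive ingredient is Rasmussen's theorem \cite{Rasmussen}, which asserts that for an alternating knot $K$ the $s$-invariant and the signature determine one another: up to a fixed sign depending only on the chosen conventions, $s(K)$ equals $\sigma(K)$, with no lower-order correction terms. Since $W(2k{+}1,n)$ and $W(2k,n)$ are alternating, this identity applies without modification, and the computation of $s$ reduces to the computation of $\sigma$ already carried out. Substituting the values from Proposition~\ref{weavingsignature} then gives $s\bigl(W(2k{+}1,n)\bigr) = 0$ from $\sigma\bigl(W(2k{+}1,n)\bigr)=0$, and $s\bigl(W(2k,n)\bigr) = -n{+}1$ from $\sigma\bigl(W(2k,n)\bigr) = -n{+}1$.

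The only point requiring care --- and hence the sole real obstacle --- is the reconciliation of sign conventions. The signature in Proposition~\ref{weavingsignature} is normalized by the Lee--Endo formula $\sigma(L) = o(D) - y(D) - 1$ of Proposition~\ref{basic_signature}, whereas Rasmussen's identity is usually quoted in the classical signature convention. I would therefore fix the overall sign once by testing it against a single known example, most conveniently the figure-eight knot $W(3,2) = 4_1$, whose signature and $s$-invariant both vanish, together with a chiral example such as $W(2,n)$ to detect the sign. Once the normalization is pinned down so that the Lee--Endo signature matches the convention under which the corollary is phrased, the stated equalities follow verbatim; there is no genuine difficulty beyond this bookkeeping, since all the analytic content already resides in Proposition~\ref{weavingsignature} and in Rasmussen's identification.
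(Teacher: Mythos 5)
Your proposal is correct and is essentially the paper's own proof: the paper likewise cites Rasmussen's theorems that the $s$-invariant coincides with the signature for alternating knots and substitutes the values from Proposition~2.3. Your additional calibration of sign conventions (e.g.\ via the figure-eight knot $W(3,2)$) is sensible bookkeeping but adds nothing beyond what the paper's one-line argument already relies on.
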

\begin{proof}
  For alternating knots, it is known that the $s$-invariant coincides with the
  signature \cite[Theorems 1--4]{Rasmussen}.
\end{proof}
\begin{figure}[h!]
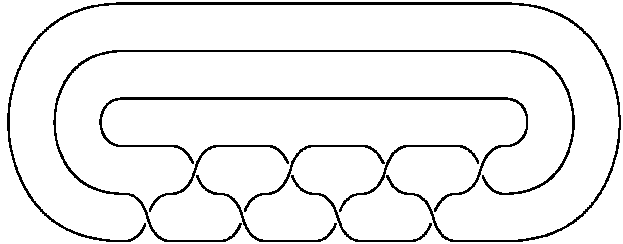

  \centering
\begin{equation*}
\xygraph{  !{0;/r1.5pc/:}
!{\hcap}[u]
!{\hcap[3]}[u]
!{\hcap[5]}[llllllll]
!{\xcaph[-8]@(0)}[dl]
!{\xcaph[-8]@(0)}[dl]
!{\xcaph[-8]@(0)}[uul]
!{\hcap[-5]}[d]
!{\hcap[-3]}[d]
!{\hcap[-1]}[d]
!{\xcaph[1]@(0)}[dl]
!{\htwist}[d]
!{\xcaph[1]@(0)}[uul]
!{\htwistneg}
!{\xcaph[1]@(0)}[dl]
!{\htwist}[d]
!{\xcaph[1]@(0)}[uul]
!{\htwistneg}
!{\xcaph[1]@(0)}[dl]
!{\htwist}[d]
!{\xcaph[1]@(0)}[uul]
!{\htwistneg}
!{\xcaph[1]@(0)}[dl]
!{\htwist}[d]
!{\xcaph[1]@(0)}[uul]
!{\htwistneg}
}   
\end{equation*}   \vspace{-9em}
  \caption{The weaving knot $W(3,4)$}
  \label{fig:w34}
\end{figure}
\begin{figure}[h!]
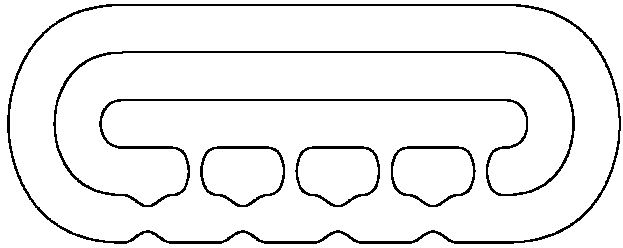

  \centering
\begin{equation*}  
  \xygraph{  !{0;/r1.5pc/:}
!{\hcap}[u]
!{\hcap[3]}[u]
!{\hcap[5]}[llllllll]
!{\xcaph[-8]@(0)}[dl]
!{\xcaph[-8]@(0)}[dl]
!{\xcaph[-8]@(0)}[uul]
!{\hcap[-5]}[d]
!{\hcap[-3]}[d]
!{\hcap[-1]}[d]
!{\xcaph[1]@(0)}[dl]
!{\huntwist}[u]
!{\huncross}[ddl]
!{\xcaph[1]@(0)}[uu]
!{\xcaph[1]@(0)}[dl]
!{\huntwist}[u]
!{\huncross}[ddl]
!{\xcaph[1]@(0)}[uu]
!{\xcaph[1]@(0)}[dl]
!{\huntwist}[u]
!{\huncross}[ddl]
!{\xcaph[1]@(0)}[uu]
!{\xcaph[1]@(0)}[dl]
!{\huntwist}[u]
!{\huncross}[ddl]
!{\xcaph[1]@(0)}[uu]
}
\end{equation*}  \vspace{-9em}
  \caption{The $A$-smoothing of $W(3,4)$}
  \label{fig:w34a}
\end{figure}
\begin{proof}[Proof of Proposition \ref{weavingsignature}]
  Consider first the example $W(3,n)$, illustrated by figures \ref{fig:w34} and \ref{fig:w34a} for $W(3,4)$.
After $A$-smoothing the diagram, the outer ring of crossings produces a circle bounding the 
rest of the smoothed diagram.
On the inner ring of crossings the $A$-smoothings produce $n$ circles in a cyclic arrangement.  Therefore
$o\bigl( W(3,n) \bigr) = 1 + n$.  The outer ring of crossings consists of positive crossings
and the inner ring of crossings consists of negative crossings, so $x(D) = y(D) = n$.  
Applying the formula of proposition \ref{basic_signature}, we obtain the result $\sigma\bigl( W(3,n) \bigr)=0$.  

For the general case $W(2k{+}1, n)$, we have the following considerations.
The crossings are organized into  $2k$ rings.  Reading from the outside toward the center, we have
first a ring of positive crossings, then a ring of negative crossings, and so on, alternating
positive and negative.  Thus, $y(D) = kn$.  Considering the $A$-smoothing of the diagram of $W(2k{+}1,n)$,
as in the special case, a bounding circle appears from the smoothing of the outer ring.  
A chain of $n$ disjoint smaller circles appears inside the second ring.  No circles appear in the third ring, nor in any odd-numbered ring 
thereafter.  On the other hand, chains of $n$ disjoint smaller circles appear in each even-numbered ring.  Since there are $k$ even-numbered
rings, we have $o(D) = 1 + kn$. 
Applying the formula of proposition \ref{basic_signature}
\begin{equation*}
  \sigma\bigl( W(2k{+}1, n)\bigr)  = o(D) - y(D) -1 = (1+kn) - kn -1 = 0.
\end{equation*}
\begin{figure}[h!]
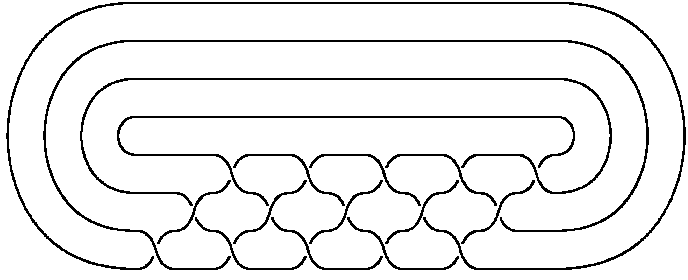

  \centering
\begin{equation*}
   \xygraph{  !{0;/r1.2pc/:}
!{\hcap}[u]
!{\hcap[3]}[u]
!{\hcap[5]}[u]
!{\hcap[7]}[lllllllllll]
!{\xcaph[-11]@(0)}[dl]
!{\xcaph[-11]@(0)}[dl]
!{\xcaph[-11]@(0)}[dl]
!{\xcaph[-11]@(0)}[uuul]
!{\hcap[-7]}[d]
!{\hcap[-5]}[d]
!{\hcap[-3]}[d]
!{\hcap[-1]}[d]
!{\xcaph[2]@(0)}[dl]
!{\xcaph[1]@(0)}[dl]
!{\htwist}[d]
!{\xcaph[1]@(0)}[uul]
!{\htwistneg}[u]
!{\htwist}[ddl]
!{\htwist}[d]
!{\xcaph[1]@(0)}[uul]
!{\htwistneg}[ul]
!{\xcaph[1]@(0)}
!{\htwist}[ddl]
!{\htwist}[d]
!{\xcaph[1]@(0)}[uul]
!{\htwistneg}[ul]
!{\xcaph[1]@(0)}
!{\htwist}[ddl]
!{\htwist}[d]
!{\xcaph[1]@(0)}[uul]
!{\htwistneg}[ul]
!{\xcaph[1]@(0)}
!{\htwist}[ddl]
!{\htwist}[d]
!{\xcaph[2]@(0)}[uul]
!{\htwistneg}[ul]
!{\xcaph[1]@(0)}
!{\htwist}[ddl]
!{\xcaph[1]@(0)}
}
\end{equation*} \vspace{-9em}
  \caption{The weaving knot $W(4,5)$}
  \label{fig:w45}
\end{figure}
\begin{figure}[h!]
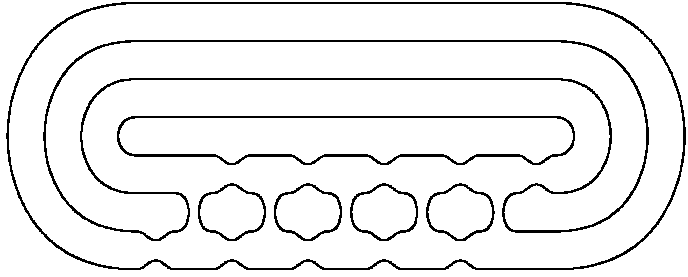

  \centering
\begin{equation*}  
  \xygraph{ !{0;/r1.2pc/:} 
!{\hcap}[u]
!{\hcap[3]}[u]
!{\hcap[5]}[u]
!{\hcap[7]}[lllllllllll]
!{\xcaph[-11]@(0)}[dl]
!{\xcaph[-11]@(0)}[dl]
!{\xcaph[-11]@(0)}[dl]
!{\xcaph[-11]@(0)}[uuul]
!{\hcap[-7]}[d]
!{\hcap[-5]}[d]
!{\hcap[-3]}[d]
!{\hcap[-1]}[d]
!{\xcaph[2]@(0)}[dl]
!{\xcaph[1]@(0)}[dl]
!{\huntwist}[d]
!{\xcaph[1]@(0)}[uul]
!{\huncross}[u]
!{\huntwist}[ddl]
!{\huntwist}[d]
!{\xcaph[1]@(0)}[uul]
!{\huncross}[ul]
!{\xcaph[1]@(0)}
!{\huntwist}[ddl]
!{\huntwist}[d]
!{\xcaph[1]@(0)}[uul]
!{\huncross}[ul]
!{\xcaph[1]@(0)}
!{\huntwist}[ddl]
!{\huntwist}[d]
!{\xcaph[1]@(0)}[uul]
!{\huncross}[ul]
!{\xcaph[1]@(0)}
!{\huntwist}[ddl]
!{\huntwist}[d]
!{\xcaph[2]@(0)}[uul]
!{\huncross}[ul]
!{\xcaph[1]@(0)}
!{\huntwist}[ddl]
!{\xcaph[1]@(0)}
}
\end{equation*} \vspace{-9em}  
  \caption{The $A$-smoothing of $W(4,5)$}
  \label{fig:w45a}
\end{figure}
For the case $W(2k, n)$,  we show  $W(4,5)$  in figures \ref{fig:w45} and \ref{fig:w45a} as an example. 
Our standard diagram may be organized into $2k{-}1$ rings of crossings.  
In each ring there are $n$ crossings, so the total number of crossings is $c(D) = (2k{-}1)n$. 
In our standard representation, there is an outer ring of $n$ positive crossings, next a ring of $n$ negative crossings, alternating 
until we end with an innermost ring of $n$ positive crossings.  There are thus $k$ rings of $n$ positive crossings and $k{-}1$ rings
of $n$ negative crossings.  Therefore,  $y(D) = kn$  and  $x(D) = (k{-}1)n$.
 Considering the $A$-smoothing of the diagram, a bounding circle appears from the smoothing of the outer ring.  As before, a chain of $n$ disjoint
smaller circles appears inside the second ring and in each successive even-numbered ring.  As previously noted, there are $k{-}1$ of these
rings.  No circles appear in odd-numbered rings, until we reach the last ring, where an inner bounding circle appears. 
Thus, $o(D) = 1 + (k{-}1)n + 1 = (k{-}1)n + 2$. 
Consequently, 
\begin{equation*}
    \sigma\bigl( W(2k, n)\bigr)  = o(D) - y(D) -1 = \bigl((k{-}1)n + 2\bigr) - kn - 1
                    = -n{+}1.  \qedhere
\end{equation*}
\end{proof}
\begin{theorem}
  \label{locateKH}
For a weaving knot $W(2k{+}1,n)$ the non-vanishing Khovanov homology ${\mathcal H}^{i,j}\bigl( W(2k{+}1, n) \bigr)$ lies on the 
lines
\begin{equation*}
  j = 2i \pm 1.
\end{equation*}
For a weaving knot $W(2k, n)$ the non-vanishing Khovanov homology ${\mathcal H}^{i,j}\bigl( W(2k, n) \bigr)$ lies on the lines
\begin{equation*}
  j = 2i + n -1 \pm 1
\end{equation*}
\end{theorem}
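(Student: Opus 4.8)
The plan is to deduce this theorem directly from Lee's support theorem (Theorem \ref{locateKHLee}) combined with the signature computations already established in Proposition \ref{weavingsignature}. The essential point is that weaving knots are alternating by definition, so Lee's theorem applies without any additional work: for any alternating knot $L$, the non-vanishing Khovanov homology $\mathcal{H}^{i,j}(L)$ is confined to the two diagonal lines $j = 2i - \sigma(L) \pm 1$. The theorem then becomes a matter of substituting the correct signature value into this formula.

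First I would confirm that the hypotheses of Theorem \ref{locateKHLee} are genuinely met. Since we restrict throughout to those $n$ for which $W(p,n)$ is an actual knot rather than a multi-component link (equivalently $\gcd(p,n) = 1$), and since the standard weaving diagram is by construction a reduced alternating diagram, both $W(2k{+}1,n)$ and $W(2k,n)$ are bona fide alternating knots. Hence Lee's support statement holds verbatim, and the only remaining task is arithmetic.

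Next I would carry out the two substitutions. For $W(2k{+}1,n)$, Proposition \ref{weavingsignature} gives $\sigma\bigl(W(2k{+}1,n)\bigr) = 0$, so the support lines become $j = 2i - 0 \pm 1 = 2i \pm 1$, which is precisely the first assertion. For $W(2k,n)$, the same proposition gives $\sigma\bigl(W(2k,n)\bigr) = -n{+}1$, so the support lines become $j = 2i - (-n{+}1) \pm 1 = 2i + n - 1 \pm 1$, which is the second assertion.

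There is essentially no genuine obstacle here beyond correctly handling the sign convention in Lee's formula. The signature enters Theorem \ref{locateKHLee} with a minus sign, so the negative signature of $W(2k,n)$ produces a \emph{positive} shift of the support lines by $n-1$; tracking this sign carefully is the only place where an error could slip in. Once that is done, both statements of the theorem follow immediately, and the proof reduces to citing Proposition \ref{weavingsignature} and performing the substitution.
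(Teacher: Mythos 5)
Your proposal is correct and is exactly the paper's argument: the paper's proof consists of one sentence instructing the reader to substitute the signature values from Proposition \ref{weavingsignature} into the formula of Theorem \ref{locateKHLee}, which is precisely your substitution $j = 2i - \sigma \pm 1$ with $\sigma = 0$ and $\sigma = -n{+}1$ respectively. Your additional care in checking that the knots are alternating and in tracking the sign of $-\sigma(L)$ is sound but adds nothing beyond what the paper leaves implicit.
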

\begin{proof}
  Substitute the calculations made in proposition \ref{weavingsignature} into
  the formula of theorem \ref{locateKHLee}.
\end{proof}
\section{Recursion in the Hecke algebra} \label{Hecke}
We review briefly the definition of the Hecke algebra $H_{N+1}$ on generators $1$ and $T_1$ through $T_N$,
and we define the representation of the braid group $B_3$ on three strands in $H_3$. 
Theorem \ref{heckerecursion} sets up recursion relations for the coefficients in the expansion 
of the image in $H_3$ of the braid $(\sigma_1 \sigma_2^{-1})^{n}$,
whose closure is the weaving knot $W(3,n)$.
These coefficients are polynomials in a parameter $q$, which is built into the definition \ref{Heckealgebras} of
the Hecke algebra.  The recursion relations are  essential for automating 
the calculation of the Jones polynomial for the knots $W(3,n)$.  
Proposition \ref{C121} uses the relations developed in theorem \ref{heckerecursion}
to prove a vanishing result for one of the coefficients.  Being able to ignore one of 
the coefficients speeds up the computations slightly.
 Proposition \ref{trailing} evaluates the constant terms of the families of polynomials,
proposition \ref{degreeone} evaluates the degree one coefficients, and 
theorem \ref{palindromes} proves certain identities satisfied by the polynomials.
These identities imply symmetry properties of the coefficients and enable calculation 
of the degrees of the polynomials in corollary \ref{degrees1}.
\begin{definition}
  \label{Heckealgebras}
Working  over the ground field $K$ containing an element $q \neq 0$, the Hecke
algebra $H_{N+1}$ is the associative algebra with $1$ on generators $T_1$, \ldots, $T_N$ satisfying these relations.
\begin{align}
  T_iT_j &= T_jT_i, \quad \text{whenever $\abs{i-j} \geq 2$,}  \label{commutativity}
\\
  T_iT_{i+1}T_i &= T_{i+1}T_iT_{i+1}, \quad \text{for $1 \leq i \leq N{-}1$,} \label{interchange} 
\intertext{and,  finally,}
  T_i^2 &= (q{-}1)T_i + q,\quad  \text{for all $i$.} \label{inverse}
\end{align}
It is well-known 
\cite{Jones_poly86}
that  $(N{+}1)!$ is the dimension of $H_{N+1}$ over $K$.
\end{definition}
Recasting the relation 
$T_i^2 = (q{-}1)T_i + q$ in the form $q^{-1} \bigl(T_i - (q-1)\bigr)\cdot T_i = 1$ shows that 
$T_i$ is invertible in $H_{N+1}$ with 
$T_i^{-1} = q^{-1}\bigl( T_i - (q-1) \bigr)$.
Consequently, the specification
$\rho(\sigma_i)  = T_i$, combined with relations \eqref{commutativity} and \eqref{interchange}, defines  a homomorphism 
$\rho \colon B_{N+1} \ra H_{N+1} $ from $B_{N+1}$, the group of braids on $N{+}1$ strands,
 into the multiplicative monoid of $H_{N+1}$. 

For work in $H_3$, choose the  ordered basis
$\{1, T_1, T_2, T_1T_2, T_2T_1, T_1T_2T_1\}$.
The word in the Hecke algebra corresponding to the knot $W(3,n)$ is formally
\begin{equation}
  \label{eq:basicw3n}
 \rho\bigl( (T_1T_2^{-1})^n\bigr)  
        = q^{-n}\bigl(C_{n,0}+ C_{n,1}\cdot T_1 + C_{n,2} \cdot T_2 + C_{n,12} \cdot T_1T_2  
               + C_{n,21} \cdot T_2T_1  + C_{n,121} \cdot T_1T_2T_1 \bigr),
\end{equation}
where the coefficients  $C_{n,*} = C_{n,*}(q)$ of the monomials in $T_1$ and $T_2$ are  polynomials in $q$.
For $n = 1$, 
  \begin{equation*}
  \rho ( \sigma_1 \sigma_2^{-1}) = T_1T_2^{-1} = q^{-1}\cdot\bigl(  T_1 ( -(q{-}1) + T_2)  \bigr) 
  = q^{-1}\bigl( -(q{-}1)\cdot T_1 + T_1T_2 \bigr),
  \end{equation*}
so we have initial values 
\begin{equation}  \label{initialCs}
  C_{1,0}(q) = 0, \; C_{1,1}(q) = -(q{-}1), \; C_{1,2}(q) = 0, \; C_{1,12}(q) = 1,
                   \;  C_{1,21}(q) = 0, \; \text{and} \; C_{1,121}(q) = 0.
\end{equation}
\begin{theorem}
  \label{heckerecursion}
These polynomials satisfy the following recursion relations.
\begin{align}
C_{n,0}(q) &= q^2\cdot C_{n-1,21}(q) - q(q{-}1)\cdot C_{n-1,1}(q)  \label{cn0}
\\
  C_{n,1}(q) &= - (q{-}1)^2\cdot C_{n-1,1}(q) - (q{-}1)\cdot C_{n-1,0}(q) + q^2\cdot C_{n-1,121}(q) \label{cn1}
\\
  C_{n,2}(q) &=  q\cdot C_{n-1,1}(q)  \label{cn2}
\\
 C_{n,12}(q)  &= (q{-}1)\cdot C_{n-1,1}(q) + C_{n-1,0}(q)  \label{cn12}
\\
\begin{split}
  C_{n, 21}(q) &= -(q{-}1)\cdot C_{(n-1),2}(q) + q\cdot C_{n-1,12}(q)\\ 
  & \hspace{3em} - (q{-}1)^2\cdot C_{n-1,21}(q)  + q(q{-}1)\cdot C_{n-1, 121}(q) \label{cn21}
\end{split}
\\
C_{n,121}(q) &= C_{n-1,2}(q) + (q{-}1)\cdot C_{n-1,21}(q)  \label{cn121}
\end{align}
\end{theorem}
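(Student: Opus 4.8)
The plan is to exploit the multiplicativity of $\rho$ together with the factorization $(T_1T_2^{-1})^n = (T_1T_2^{-1})^{n-1}\cdot(T_1T_2^{-1})$. First I would rewrite the rightmost factor in the standard basis: since $T_2^{-1} = q^{-1}\bigl(T_2 - (q-1)\bigr)$, the computation already displayed for the case $n=1$ gives $T_1T_2^{-1} = q^{-1}\bigl(-(q-1)T_1 + T_1T_2\bigr)$. Substituting the expansion \eqref{eq:basicw3n} for $\rho\bigl((T_1T_2^{-1})^{n-1}\bigr)$ on the right and clearing the common factor $q^{-n}$ from both sides, the problem reduces to expanding the single product
\begin{equation*}
\Bigl(\textstyle\sum_* C_{n-1,*}\cdot m_*\Bigr)\cdot\bigl(-(q-1)T_1 + T_1T_2\bigr)
\end{equation*}
in the ordered basis $\{1, T_1, T_2, T_1T_2, T_2T_1, T_1T_2T_1\}$, where $m_*$ ranges over that basis and $C_{n-1,*}$ are the coefficients from step $n-1$.

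The heart of the argument is therefore a right-multiplication table: I would compute $m_*\cdot T_1$ and $m_*\cdot(T_1T_2)$ for each of the six basis monomials, reducing every result back to the standard basis using the three defining relations \eqref{commutativity}, \eqref{interchange}, and \eqref{inverse}. The products with $T_1$ are routine, the only nontrivial reductions being $T_1\cdot T_1 = (q-1)T_1 + q$, then $T_2T_1\cdot T_1 = (q-1)T_2T_1 + qT_2$, and $T_1T_2T_1\cdot T_1 = (q-1)T_1T_2T_1 + qT_1T_2$, each a single application of \eqref{inverse}.

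The products with $T_1T_2$ are where the real bookkeeping lives. Some are short, such as $T_2\cdot T_1T_2 = T_2T_1T_2 = T_1T_2T_1$ by \eqref{interchange}, but the two longest, $T_2T_1\cdot T_1T_2$ and $T_1T_2T_1\cdot T_1T_2$, require interleaving the quadratic relation \eqref{inverse} with the braid relation \eqref{interchange} several times and can spread across all six basis vectors. I expect these two reductions to be the main obstacle, since a single misapplied braid move or a dropped substitution of $T_2^2 = (q-1)T_2 + q$ would corrupt several of the six output relations simultaneously; it is worth recording these two reductions fully, for example $T_2T_1\cdot T_1T_2 = (q-1)T_1T_2T_1 + q(q-1)T_2 + q^2$.

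Finally, collecting the coefficient of each basis monomial in the fully expanded product and matching it against the left-hand side \eqref{eq:basicw3n} yields the six identities \eqref{cn0}--\eqref{cn121} directly. For instance, reading off the constant term, which is fed only by $C_{n-1,1}\,T_1\cdot(-(q-1)T_1)$ and by $C_{n-1,21}\,T_2T_1\cdot T_1T_2$, gives $C_{n,0} = q^2\,C_{n-1,21} - q(q-1)\,C_{n-1,1}$, matching \eqref{cn0}; the remaining five relations follow by the same collection. As an independent consistency check at the end, I would substitute the initial values \eqref{initialCs} into the recursion and verify that the resulting expansion of $\rho\bigl((T_1T_2^{-1})^2\bigr)$ agrees with a direct computation of that square.
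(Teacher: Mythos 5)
Your proposal is correct and follows essentially the same route as the paper's proof: factor $(T_1T_2^{-1})^n$ as $(T_1T_2^{-1})^{n-1}\cdot q^{-1}\bigl(-(q{-}1)T_1 + T_1T_2\bigr)$, reduce each of the twelve right-products of basis monomials to the standard basis via the Hecke relations, and collect coefficients; your recorded key reduction $T_2T_1\cdot T_1T_2 = (q{-}1)T_1T_2T_1 + q(q{-}1)T_2 + q^2$ is exactly the paper's expansion \eqref{expg}. Organizing the work as a right-multiplication table rather than one long expansion is only a bookkeeping difference, and your closing $n=2$ consistency check mirrors the paper's Example \ref{secondCs}.
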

\begin{proof}
  We have
  \begin{multline} \label{exp0}
    \rho( T_1T_2^{-1} )^n =  \rho( T_1T_2^{-1} )^{n-1} \cdot \rho (T_1T_2^{-1}) 
\\
     =   q^{-n+1}\bigl(C_{n-1,0}+ C_{n-1,1}\cdot T_1 + C_{n-1,2} \cdot T_2 + C_{n-1,12} \cdot T_1T_2  
               + C_{n-1,21} \cdot T_2T_1  + C_{n-1,121}\cdot T_1T_2T_1 \bigr)
                 \\  \cdot q^{-1}\bigl( -(q{-}1)\cdot T_1 + T_1T_2 \bigr)
\\
    =  q^{-n}\biggl( -(q{-}1)C_{n-1,0}\cdot T_1  -(q{-}1)C_{n-1,1}\cdot T_1^2   -(q{-}1)C_{n-1,2} \cdot T_2T_1 
\\
                -(q{-}1)C_{n-1,12} \cdot T_1T_2T_1   -(q{-}1)C_{n-1,21} \cdot T_2T_1^2   -(q{-}1)C_{n-1,121}\cdot T_1T_2T_1^2
\\
          + C_{n-1,0} \cdot T_1T_2+ C_{n-1,1}\cdot T_1^2T_2 + C_{n-1,2} \cdot T_2T_1T_2
\\
           + C_{n-1,12} \cdot T_1T_2T_1T_2   + C_{n-1,21} \cdot T_2T_1^2T_2  + C_{n-1,121}\cdot T_1T_2T_1^2T_2 \biggr)
\\ 
   = q^{-n}\biggl( \Bigl( -(q{-}1)C_{n-1,0}\cdot T_1  -(q{-}1)C_{n-1,2} \cdot T_2T_1  -(q{-}1)C_{n-1,12} \cdot T_1T_2T_1 +  C_{n-1,0} \cdot T_1T_2\Bigr)
\\
  + \Bigl\{-(q{-}1)C_{n-1,1}\cdot T_1^2 -(q{-}1)C_{n-1,21} \cdot T_2T_1^2 -(q{-}1)C_{n-1,121}\cdot T_1T_2T_1^2 +  C_{n-1,1}\cdot T_1^2T_2
\\
     +C_{n-1,2} \cdot T_2T_1T_2  + C_{n-1,12} \cdot T_1T_2T_1T_2   + C_{n-1,21} \cdot T_2T_1^2T_2  + C_{n-1,121}\cdot T_1T_2T_1^2T_2 \Bigr\} \biggr)
  \end{multline}
after collecting powers of $q$ and expanding.  In the last grouping, the first four terms inside the parentheses $( \ )$ 
involve only elements of the preferred basis;
the second eight terms in the pair of braces $\{\ \}$ all require further expansion, as follows. 
\begin{align}
 \begin{split}  -(q{-}1)C_{n-1,1}\cdot T_1^2 &= -(q{-}1)C_{n-1,1}\cdot ((q-1)T_1 + q) 
\\   &= -(q-1)^2C_{n-1,1}\cdot T_1 - q(q-1)C_{n-1,1} \end{split} \label{expa}
\\
 \begin{split}  -(q{-}1)C_{n-1,21} \cdot T_2T_1^2 &= -(q{-}1)C_{n-1,21} \cdot T_2((q-1)T_1 + q) 
\\    &= -(q-1)^2C_{n-1,21}\cdot T_2T_1 - q(q-1)C_{n-1,21} \cdot T_2  \end{split} \label{expb}
\\
\begin{split} -(q{-}1)C_{n-1,121}\cdot T_1T_2T_1^2 &= -(q{-}1)C_{n-1,121} \cdot T_1T_2((q-1)T_1 + q) 
\\    &   = -(q-1)^2C_{n-1,121}\cdot T_1T_2T_1 - q(q-1)C_{n-1,121} \cdot T_1T_2 \end{split} \label{expc}
\\
\begin{split} C_{n-1,1}\cdot T_1^2T_2 &= C_{n-1,1}\cdot( (q{-}1) T_1 + q) T_2 
\\    &= (q{-}1)C_{n-1,1}\cdot T_1T_2 + qC_{n-1,1} \cdot T_2
\end{split} \label{expd}
\\
 C_{n-1,2} \cdot T_2T_1T_2 &= C_{n-1,2} \cdot T_1T_2T_1 \label{expe}
\\
\begin{split}   
  C_{n-1,12} \cdot T_1T_2T_1T_2   &=  C_{n-1,12}\cdot T_1^2T_2T_1 = C_{n-1,12}((q{-}1)T_1 + q)T_2T_1
\\ &= (q{-}1)C_{n-1,12}\cdot T_1T_2T_1 + qC_{n-1,12}\cdot T_2T_1
\end{split} \label{expf}
\\
\begin{split}
  C_{n-1,21} \cdot T_2T_1^2T_2  &= C_{n-1,21}\cdot T_2 ((q{-}1)T_1 + q) T_2
\\ &= (q{-}1)C_{n-1,21} \cdot T_2T_1T_2 + qC_{n-1,21} \cdot T_2^2
\\ &= (q{-}1)C_{n-1,21} \cdot T_1T_2T_1 + qC_{n-1,21}\cdot ((q{-}1)T_2 + q) 
\\ &=  (q{-}1)C_{n-1,21} \cdot T_1T_2T_1 + q(q{-}1)C_{n-1,21}\cdot T_2 + q^2C_{n-1,21})
\end{split} \label{expg}
\\
\begin{split}
 C_{n-1,121}\cdot T_1T_2T_1^2T_2  &=  C_{n-1,121} \cdot T_1T_2((q{-}1)T_1 + q)T_2 
\\ &= (q{-}1)C_{n-1,121} \cdot T_1T_2T_1T_2 + qC_{n-1,121} \cdot T_1T_2^2
 \\ &= (q{-}1)C_{n-1,121} \cdot T_1^2T_2T_1 + qC_{n-1,121}\cdot T_1((q{-}1)T_2+ q)
\\ &= (q{-}1)C_{n-1,121} \cdot ((q{-}1)T_1+ q)T_2T_1
\\  & \hspace{3em} + qC_{n-1,121}\cdot T_1((q{-}1)T_2+ q)
\\ &= (q{-}1)^2C_{n-1,121}\cdot T_1T_2T_1 + q (q{-}1)C_{n-1,121}\cdot T_2T_1 
\\  & \hspace{3em} + q(q{-}1)C_{n-1,121}\cdot T_1T_2+ q^2C_{n-1,121} \cdot T_1
\end{split} \label{exph}
\end{align}
Collecting the constant terms from \eqref{expa} and \eqref{expg}, we get
\begin{align*}
  C_{n,0} &=  - q(q-1)C_{n-1,1} + q^2C_{n-1,21}.
\intertext{Collecting coefficients of $T_1$ from \eqref{exp0}, \eqref{expa}, \eqref{exph}, we get}
  C_{n,1} &=  -(q{-}1)C_{n-1,0} -(q-1)^2C_{n-1,1} + q^2C_{n-1,121}.
\intertext{Collecting coefficients of $T_2$ from \eqref{expb}, \eqref{expd}, and \eqref{expg}, we get}
  C_{n,2} &=  - q(q-1)C_{n-1,21}  +  qC_{n-1,1}  + q(q{-}1)C_{n-1,21} = qC_{n-1,1}.
\intertext{Collecting coefficients of $T_1T_2$ from \eqref{exp0}, \eqref{expc}, \eqref{expd}, and \eqref{exph}, we get}
  C_{n,12} &=  C_{n-1,0}  - q(q-1)C_{n-1,121} + (q{-}1)C_{n-1,1}  + q(q{-}1)C_{n-1,121} =  C_{n-1,0}  + (q{-}1)C_{n-1,1}
\intertext{Collecting coefficients of $T_2T_1$ from \eqref{exp0}, \eqref{expb}, \eqref{expf}, and \eqref{exph}, we get}
  C_{n,21} &=  -(q{-}1)C_{n-1,2} -(q-1)^2C_{n-1,21} + qC_{n-1,12}  + q (q{-}1)C_{n-1,121}.
\intertext{Collecting coefficients of $T_1T_2T_1$ from \eqref{exp0}, \eqref{expc}, \eqref{expe}, \eqref{expf}, \eqref{expg}, and \eqref{exph},
we get}
 C_{n,121} &=   -(q{-}1)C_{n-1,12}  -(q-1)^2C_{n-1,121} + C_{n-1,2}  
\\ 
&\hspace{3em}+ (q{-}1)C_{n-1,12} +  (q{-}1)C_{n-1,21} + (q{-}1)^2C_{n-1,121}
\\
         &= C_{n-1,2} +  (q{-}1)C_{n-1,21}
\end{align*}
Up to simple rearrangements and expansion of notation, these are formulas \eqref{cn0} through ~\eqref{cn121}.
\end{proof}
\begin{example} \label{secondCs}
Applying the recursion formulas just proved to the table of initial polynomials, 
or by computing $\rho\bigl( (\sigma_1 \sigma_2^{-1})^2 \bigr)$ directly from the definitions, 
we find
\begin{align}
  C_{2,0}(q) &= q^2 \cdot C_{1,21}(q) - q(q{-}1)\cdot C_{1,1}(q) = q(q{-}1)^2,
\\
  C_{2,1}(q) &= -(q{-}1)^2\cdot C_{1,1}(q)-(q{-}1)\cdot C_{1,0}(q) = (q{-}1)^3,
\\
  C_{2,2}(q) &= q \cdot C_{1,1}(q) = -q(q{-}1),
\\
  C_{2,12}(q) &= (q{-}1)\cdot C_{1,1}(q) + C_{1,0}(q) = -(q{-}1)^2,
\\
  C_{2,21}(q) &= -(q{-}1)\cdot C_{1,2}(q) + q \cdot C_{1,12}(q) - (q{-}1)^2\cdot C_{1,21}(q) = q,
\\
  C_{2,121}(q) &=0.
\end{align}
  \end{example}
As a first application, we have the following vanishing result.
\begin{proposition}
  \label{C121}
For all $n$, $C_{n,121}(q) = 0$.
\end{proposition}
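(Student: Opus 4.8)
The plan is to prove the statement by induction, but the coefficient $C_{n,121}$ does not satisfy a self-contained recursion on its own: by \eqref{cn121} it is determined entirely by the stage $n-1$ data through $C_{n,121} = C_{n-1,2} + (q-1)\,C_{n-1,21}$. I would therefore shift attention to the auxiliary combination $E_m := C_{m,2} + (q-1)\,C_{m,21}$, which is precisely $C_{m+1,121}$. Proving $E_m = 0$ for all $m \geq 1$, together with the value $C_{1,121}=0$ recorded in \eqref{initialCs}, is then equivalent to the proposition, and the real task becomes finding a closed linear recurrence satisfied by the single quantity $E_m$.

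To produce such a recurrence, I would substitute \eqref{cn2} and \eqref{cn21} into the definition of $E_m$ and sort the resulting stage-$(m-1)$ terms into two groups. The terms carrying $C_{m-1,2}$ and $C_{m-1,21}$ reassemble, up to the factor $-(q-1)^2$, into $C_{m-1,2}+(q-1)\,C_{m-1,21} = E_{m-1}$. The remaining terms are $q\,C_{m-1,1} + q(q-1)\,C_{m-1,12}$, and the crux of the argument is the identity
\begin{equation*}
  C_{m,1} + (q-1)\,C_{m,12} = q^2\,C_{m-1,121},
\end{equation*}
which follows from \eqref{cn1} and \eqref{cn12} because the $C_{m-1,1}$ and $C_{m-1,0}$ contributions cancel exactly. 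Applying it at index $m-1$ collapses the leftover into $q^3\,C_{m-2,121}$. Since $C_{j,121}=E_{j-1}$, this yields the homogeneous three-step recurrence
\begin{equation*}
  E_m = -(q-1)^2\,E_{m-1} + q(q-1)^2\,E_{m-2} + q^3\,E_{m-3},
\end{equation*}
valid once $m$ is large enough that all the substitutions are legitimate.

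Finally I would anchor the induction by checking the three seed values $E_1$, $E_2$, $E_3$ directly from \eqref{initialCs}, Example \ref{secondCs}, and one further application of the recursion relations to obtain $C_{3,2}$ and $C_{3,21}$; each of these vanishes. A homogeneous linear recurrence with three consecutive zero initial terms forces $E_m = 0$ for every $m$, whence $C_{n,121} = E_{n-1} = 0$ for $n \geq 2$, while $C_{1,121}=0$ is immediate. I expect the main obstacle to be the recognition that $C_{m,2}+(q-1)\,C_{m,21}$ is the right object to track, together with the verification of the cancellation identity above, since it is precisely this cancellation that makes the recurrence homogeneous and closed in a single variable rather than coupling $C_{n,121}$ to the full six-dimensional system.
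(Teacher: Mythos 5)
Your proof is correct, and it is organized differently from the paper's. The paper also proceeds by induction on the relations of Theorem \ref{heckerecursion}, but it runs a strong induction directly on $C_{n,121}$, invoking the hypothesis $C_{k,121}=0$ for $k\leq n$ three separate times in the middle of the computation (to drop the $q(q{-}1)\cdot C_{n-1,121}$ term from \eqref{cn21}, to simplify $C_{n-1,1}$ via \eqref{cn1}, and at the very end), finally arriving at $C_{n+1,121} = -(q{-}1)^2\cdot C_{n,121} = 0$. You instead make the auxiliary quantity $E_m = C_{m,2} + (q{-}1)\,C_{m,21} = C_{m+1,121}$ the object of study and derive an \emph{unconditional} homogeneous recurrence
\begin{equation*}
  E_m = -(q{-}1)^2\,E_{m-1} + q(q{-}1)^2\,E_{m-2} + q^3\,E_{m-3}, \qquad m \geq 4,
\end{equation*}
whose key ingredient is the exact cancellation $C_{m,1} + (q{-}1)\,C_{m,12} = q^2\,C_{m-1,121}$, obtained by adding \eqref{cn1} to $(q{-}1)$ times \eqref{cn12}; the vanishing then follows from the three zero seeds $E_1=E_2=E_3=0$ with no conditional simplification anywhere. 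The underlying algebra is the same cancellation the paper exploits, but your packaging buys two things: the logical structure is cleaner (the induction is reduced to the triviality that a homogeneous linear recurrence with zero seeds has only the zero solution), and the identity $C_{m,1}+(q{-}1)\,C_{m,12}=q^2\,C_{m-1,121}$ is isolated as a reusable fact rather than buried in a chain of substitutions. The cost is one extra seed computation ($E_3$, requiring $C_{3,2}$ and $C_{3,21}$) and some care with the smallest indices, which you correctly flag: the step $C_{m-2,121}=E_{m-3}$ needs $m\geq 4$, so the recurrence cannot be pushed below that. One presentational nit: when you sort the stage-$(m-1)$ terms, the term $q(q{-}1)^2\,C_{m-1,121}$ coming from \eqref{cn21} should be listed alongside $q\,C_{m-1,1}+q(q{-}1)\,C_{m-1,12}$ as a leftover; your final recurrence accounts for it correctly (it is the $q(q{-}1)^2\,E_{m-2}$ term), but the prose momentarily omits it.
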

\begin{proof}
For $n \geq 1$, we claim $C_{n+1,121}(q) = 0$.
Make the inductive assumption that $C_{k,121}(q) = 0$ for $1 \leq k \leq n$. Apply \eqref{cn121}, \eqref{cn21}, and the 
inductive hypothesis to write
\begin{multline*}
       C_{n+1,121}(q) = C_{n,2}(q) + (q{-}1)\cdot C_{n,21}(q)
\\
  \shoveleft = C_{n,2}(q) 
\\
   + (q{-}1)\Bigl( -(q{-}1)\cdot  C_{n-1,2}(q) + q\cdot C_{n-1,12}(q) - (q{-}1)^2\cdot C_{n-1,21}(q)
 + q(q{-}1)\cdot C_{n-1, 121} \Bigr) 
\\
  = C_{n,2}(q) + 
    (q{-}1)\bigl( -(q{-}1)\cdot C_{n-1,2}(q) + q\cdot C_{n-1,12}(q) -
    (q{-}1)^2\cdot C_{n-1,21}(q) \bigr).
\end{multline*}
Using \eqref{cn2} to replace the first term $C_{n,2}(q)$ and \eqref{cn12} to replace the third term factor $C_{n-1,12}(q)$ on the right,
\begin{align*}
  C_{n+1, 121}(q)  &= q\cdot C_{n-1,1}(q) - (q{-}1)^2C_{n-1,2}(q) + q(q{-}1)\bigl( (q{-}1)C_{n-2,1}(q)+C_{n-2,0}(q) \bigr)
 \\
 &\hspace{3em} - (q{-}1)^3C_{n,21}(q) 
\\
  &= q\cdot C_{n-1,1}(q) - (q{-}1)^2C_{n-1,2}(q) + (q{-}1)^2\Bigl( q C_{n-2,1}(q) \Bigr) + q(q{-}1)C_{n-2,0}(q) 
\\ 
&\hspace{3em} - (q{-}1)^3C_{n,21}(q) 
\\
  &= q\cdot C_{n-1,1}(q) - (q{-}1)^2C_{n-1,2}(q) + (q{-}1)^2C_{n-1,2}(q)+ q(q{-}1)C_{n-2,0}(q)
\\ 
&\hspace{3em} - (q{-}1)^3C_{n,21}(q),
\end{align*}
where we use \eqref{cn2} in reverse to rewrite the term $q C_{n-2,1}(q) $.  Making the obvious cancellation,
\begin{align*}
  C_{n+1, 121}  &=  q\cdot C_{n-1,1}(q)+ q(q{-}1) \cdot C_{n-2,0}(q) - (q{-}1)^3 \cdot C_{n,21}(q) 
\\
&=q\bigl( C_{n-1,1}+ (q{-}1)C_{n-2,0}\bigr) - (q{-}1)^3 \cdot C_{n-1, 21}
\\
&= q\Bigl(\bigl(-(q{-}1)^2 \cdot C_{n-2,1} - (q{-}1) \cdot C_{n-2,0}\bigr) + (q{-}1) \cdot C_{n-2,0}\Bigr) - (q{-}1)^3 \cdot C_{n-1, 21},
\end{align*}
since 
\begin{align*}  
C_{n-1,1}(q) &= - (q{-}1)^2\cdot C_{n-2,1}(q) - (q{-}1)\cdot C_{n-2,0}(q) + q^2\cdot C_{n-2,121}(q)
\\
            & = - (q{-}1)^2\cdot C_{n-2,1}(q) - (q{-}1)\cdot C_{n-2,0}(q)
\end{align*}
by \eqref{cn1} and the inductive hypothesis.  Therefore, 
\begin{align*}
C_{n+1,121}(q)  &= -q(q{-}1)^2\cdot C_{n-2,1}(q)  - (q{-}1)^3\cdot C_{n-1, 21}(q)
\\
&= -(q{-}1)^2\cdot C_{n-1,2}(q) - (q{-}1)^3\cdot C_{n-1,21}(q),
\intertext{using \eqref{cn2} in the form $  C_{n-1,2}(q) =  q\cdot C_{n-2,1}(q) $,}
&= -(q{-}1)^2\bigl( C_{n-1, 2}(q) - (q{-}1)\cdot C_{n-1,21} (q)\bigr)
\\
&= -(q{-}1)^2\cdot C_{n, 121}(q) = 0,
\end{align*}
using \eqref{cn121} and the inductive hypothesis.
\end{proof}
\begin{proposition}
  \label{trailing}
For $n \geq 1$, the degree $0$ terms in the non-vanishing polynomials $C_{n,-}$ are as follows:
\begin{gather}
  C_{n,0}(0) = c_{n,0,0} = 0,\;C_{n,1}(0) = c_{n,1,0} = (-1)^{n-1},\; C_{n,2}(0) = c_{n,2,0}=0,\notag
\\
               C_{n,12}(0) =c_{n,12,0}= (-1)^{n-1},\; C_{n,21}(0) = c_{n,21,0}=0. \label{trailingcoefftable}
\end{gather}
\end{proposition}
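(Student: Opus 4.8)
The plan is to argue by a single-step induction on $n$, evaluating each of the recursion relations \eqref{cn0}--\eqref{cn121} at $q = 0$. The decisive simplification is that every monomial in those relations carrying an explicit factor of $q$ vanishes at $q=0$, while a factor $(q-1)$ contributes the sign $-1$ and a factor $(q-1)^2$ contributes $+1$. Thus \eqref{cn0} and \eqref{cn2} collapse immediately to $C_{n,0}(0) = 0$ and $C_{n,2}(0) = 0$ for every $n \geq 2$, since both right-hand sides are divisible by $q$. After invoking Proposition \ref{C121} to discard the terms involving $C_{n-1,121}$, the remaining relations reduce at $q = 0$ to
\begin{align*}
  C_{n,1}(0) &= -C_{n-1,1}(0) + C_{n-1,0}(0), \\
  C_{n,12}(0) &= -C_{n-1,1}(0) + C_{n-1,0}(0), \\
  C_{n,21}(0) &= C_{n-1,2}(0) - C_{n-1,21}(0).
\end{align*}

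First I would record the base case $n = 1$ directly from the initial values \eqref{initialCs}: there $C_{1,0}(0) = 0$, $C_{1,1}(0) = 1 = (-1)^{0}$, $C_{1,2}(0) = 0$, $C_{1,12}(0) = 1 = (-1)^{0}$, and $C_{1,21}(0) = 0$, in agreement with \eqref{trailingcoefftable}. For the inductive step I would assume the full tuple \eqref{trailingcoefftable} at index $n-1$. Substituting the inherited vanishings $C_{n-1,0}(0) = 0$ and $C_{n-1,2}(0) = 0$ into the three displayed recurrences decouples them into pure sign-flips: $C_{n,1}(0) = -C_{n-1,1}(0) = -(-1)^{n-2} = (-1)^{n-1}$, likewise $C_{n,12}(0) = -C_{n-1,1}(0) = (-1)^{n-1}$, and $C_{n,21}(0) = -C_{n-1,21}(0) = 0$. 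Together with $C_{n,0}(0) = C_{n,2}(0) = 0$ from the first paragraph, this reproduces \eqref{trailingcoefftable} at index $n$ and closes the induction.

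Once the recursions are specialized to $q = 0$ the argument is essentially mechanical, so I do not anticipate a genuine obstacle; the only thing to watch is the bookkeeping that couples the five quantities together. The point that makes the induction go through so cleanly is the prior vanishing $C_{n-1,0}(0) = C_{n-1,2}(0) = 0$, which severs the feedback from the coefficients $C_{\bullet,0}$ and $C_{\bullet,2}$ into the others and leaves the isolated recurrences $C_{n,1}(0) = -C_{n-1,1}(0)$ and $C_{n,21}(0) = -C_{n-1,21}(0)$. I would therefore establish the two vanishing statements first and then use them to streamline the computation of the surviving coefficients $C_{n,1}$, $C_{n,12}$, and $C_{n,21}$.
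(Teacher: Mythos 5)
Your proposal is correct and follows essentially the same route as the paper: a base case read off from the initial values \eqref{initialCs}, followed by an induction that evaluates the recursion relations \eqref{cn0}--\eqref{cn21} at $q=0$, where the vanishing of $C_{n-1,0}(0)$ and $C_{n-1,2}(0)$ decouples the system and leaves the sign-flip recurrences for $C_{n,1}(0)$, $C_{n,12}(0)$, and $C_{n,21}(0)$. The only cosmetic difference is that you invoke Proposition \ref{C121} to remove the $C_{n-1,121}$ terms, whereas the paper drops them silently because they carry explicit factors of $q^2$ or $q(q-1)$ and so vanish at $q=0$ regardless.
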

Since proposition \ref{C121} proves that $C_{n, 121}(q)$ is identically zero, it does not appear in the list just given or anywhere
in the later parts of this paper.
\begin{proof}
  Examinination of  the formulas for the polynomials $C_{1,-}(q)$ given in equations \eqref{initialCs} 
and for $C_{2, -}(q)$ given in example \ref{secondCs} starts the inductive proof. 
Substituting $q{=}0$ in the recursive formulas \eqref{cn0} and \eqref{cn2} immediately yields
$C_{n,0}(0) = 0$ and $C_{n,2}(0) = 0$. Substituting $q{=}0$ into formula \eqref{cn1} yields
\begin{equation*}
  c_{n,1,0} = C_{n,1}(0) =  - (0{-}1)^2\cdot C_{n-1,1}(0) - (0{-}1)\cdot C_{n-1,0}(0) = -(-1)^{n-2} + 0 = (-1)^{n-1},
\end{equation*}
by the inductive hypothesis.  Similarly, substituting $q{=}0$ into formula \eqref{cn12} yields
\begin{equation*}
  c_{n,12,0} = C_{n,12}(0) =  (0{-}1)\cdot C_{n-1,1}(0) + C_{n-1,0}(0) = (-1)\cdot(-1)^{n-2} + 0 = (-1)^{n-1}.
\end{equation*}
Finally, substituting $q{=}0$ into formula \eqref{cn21} yields
\begin{multline*}
  c_{n,21,0}= C_{n,21}(0) = -(0{-}1)\cdot C_{n-1,2}(0) + 0\cdot C_{n-1,12}(0) - (0{-}1)^2\cdot C_{n-1,21}(0)
\\  
         = 1\cdot 0 + 0\cdot(-1)^{n-2} - 1\cdot 0 = 0. \qedhere
\end{multline*}
\end{proof}
\begin{proposition} \label{degreeone}
  For $n \geq 2$ the degree one coefficients in the non-vanishing polynomials $C_{n,-}(q)$  are as follows:
  \begin{gather}
    c_{n,0,1} = (-1)^n, \quad c_{n,1,1} = (-1)^n(n{+}1), \quad c_{n,2,1} = (-1)^n,  \notag
\\ 
     c_{n,12,1} = (-1)^nn, \quad  c_{n,21,1} = (-1)^n.  \label{degonetable}
  \end{gather}
\end{proposition}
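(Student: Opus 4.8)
The plan is to run an induction on $n$ that extracts the coefficient of $q^1$ from each recursion relation of Theorem \ref{heckerecursion}, feeding in the degree-zero data already recorded in Proposition \ref{trailing}. Writing $[q^1]P$ for the coefficient of $q$ in a polynomial $P$, and denoting by $c_{*,0}$ and $c_{*,1}$ the constant and linear coefficients of a polynomial $C$, the elementary rules I will use are
\begin{align*}
  [q^1](q^2 C) &= 0, & [q^1](qC) &= c_{*,0}, & [q^1]\bigl((q-1)C\bigr) &= c_{*,0} - c_{*,1}, \\
  {}[q^1]\bigl(q(q-1)C\bigr) &= -c_{*,0}, & [q^1]\bigl((q-1)^2 C\bigr) &= c_{*,1} - 2c_{*,0}. &&
\end{align*}
The crucial structural observation is that a factor of $q^2$ never contributes to the linear coefficient: in particular the $q^2\cdot C_{n-1,21}$ term of \eqref{cn0} is invisible to $[q^1]$, while the $q^2\cdot C_{n-1,121}$ term of \eqref{cn1} and the $q(q-1)\cdot C_{n-1,121}$ term of \eqref{cn21} both vanish outright by Proposition \ref{C121}.

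The base case $n=2$ is read off directly from Example \ref{secondCs}: expanding $C_{2,0}=q(q-1)^2$, $C_{2,1}=(q-1)^3$, $C_{2,2}=-q(q-1)$, $C_{2,12}=-(q-1)^2$, and $C_{2,21}=q$ gives linear coefficients $1,3,1,2,1$, which match \eqref{degonetable} for $n=2$. For the inductive step I assume the formulas hold at level $n-1$ with $n-1\geq 2$. Two of the five coefficients need only Proposition \ref{trailing}: applying $[q^1]$ to \eqref{cn0} and to \eqref{cn2}, and using $c_{n-1,1,0}=(-1)^{n-2}$, gives
\begin{equation*}
  c_{n,0,1} = c_{n-1,1,0} = (-1)^{n-2} = (-1)^n, \qquad c_{n,2,1} = c_{n-1,1,0} = (-1)^n,
\end{equation*}
so these require no inductive hypothesis on linear coefficients at all.

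The remaining three coefficients genuinely invoke the inductive hypothesis. Applying $[q^1]$ to \eqref{cn12}, \eqref{cn1}, and \eqref{cn21} (discarding the $q^2$- and $C_{n-1,121}$-terms as above) produces
\begin{align*}
  c_{n,12,1} &= \bigl(c_{n-1,1,0} - c_{n-1,1,1}\bigr) + c_{n-1,0,1}, \\
  c_{n,1,1} &= -\bigl(c_{n-1,1,1} - 2c_{n-1,1,0}\bigr) - \bigl(c_{n-1,0,0} - c_{n-1,0,1}\bigr), \\
  c_{n,21,1} &= -\bigl(c_{n-1,2,0} - c_{n-1,2,1}\bigr) + c_{n-1,12,0} - \bigl(c_{n-1,21,1} - 2c_{n-1,21,0}\bigr).
\end{align*}
Into the right-hand sides I substitute the degree-zero values from Proposition \ref{trailing} and the degree-one values from the inductive hypothesis, then simplify using $(-1)^{n-1}=-(-1)^n$ and $(-1)^{n-2}=(-1)^n$. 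After the cancellations one obtains $c_{n,12,1}=(-1)^n n$, $c_{n,1,1}=(-1)^n(n{+}1)$, and $c_{n,21,1}=(-1)^n$, completing the induction.

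The argument is mechanical, so the main obstacle is bookkeeping rather than conceptual: one must keep straight the signs produced by the factors $(q-1)$ and $(q-1)^2$, and remember that a factor of $q^2$ contributes nothing to the linear term while a factor of $q$ merely shifts the constant term into it. I expect the cleanest route is to process the six relations in exactly the order above — the two trailing-only coefficients $c_{n,0,1},c_{n,2,1}$ first, then the three that close the induction — so that every quantity appearing on a right-hand side has already been established before it is used.
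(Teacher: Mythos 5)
Your proposal is correct and follows essentially the same route as the paper: the paper differentiates each recursion relation of Theorem \ref{heckerecursion} and evaluates at $q=0$, which is exactly your $[q^1]$ extraction, and it likewise feeds in the constant terms from Proposition \ref{trailing} together with the inductively known linear coefficients (discarding the $C_{n-1,121}$ terms via Proposition \ref{C121}). The only cosmetic difference is that for $c_{n,1,1}$ and $c_{n,21,1}$ the paper phrases the step as a first-order recursion (e.g.\ $c_{n,1,1}=(-1)^n-c_{n-1,1,1}$) solved from the $n=2$ initial values, whereas you substitute the inductive hypothesis directly.
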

\begin{proof}
  These are all handled in the same manner.  Namely, differentiate the recursive relations
for each successive polynomial, substitute $q{=}0$, and use the values from proposition \ref{trailing} as appropriate.
Concerning $C_{n,0}(q)$, differentiate \eqref{cn0} with respect to $q$, obtaining
\begin{equation*}
C_{n,0}'(q) = 2q \cdot  C_{n-1,21}(q) + q^2 C_{n-1,21}'(q)  - (2q{-}1)\cdot C_{n-1,1}(q) - q(q{-}1)\cdot C_{n-1,1}'(q),
\end{equation*}
whence
\begin{equation*}
  c_{n,0,1} = C_{n,0}'(0) = -(-1)\cdot C_{n-1,1}(0) = (-1)^{n-2}= (-1)^n.
\end{equation*}
Concerning $C_{n,1}(q)$, differentiate \eqref{cn1} with respect to $q$, obtaining
\begin{equation*}
C_{n,1}'(q) = -2(q{-}1) \cdot C_{n-1,1}(q) - (q{-}1)^2\cdot C_{n-1,1}'(q) - C_{n-1,0}(q) - (q{-}1)\cdot C_{n-1,0}'(q),
\end{equation*}
whence
\begin{multline*}
  c_{n,1,1} = C_{n,1}'(0) = 2\cdot C_{n-1,1}(0) - C_{n-1,1}'(0) -
  C_{n-1,0}(0) + C_{n-1,0}'(0)
\\
   = 2\cdot(-1)^{n-2}-c_{n-1,1,1} - 0 + (-1)^{n-1}.
\end{multline*}
Thus, we have the recursive formula $c_{n,1,1} = (-1)^n - c_{n-1,1,1}$. Starting from $C_{2,1}(q) = (q-1)^3$ and $c_{2,1,1} = 3$, we obtain
the closed form expression $c_{n,1,1} = (-1)^n(n{+}1)$.

Concerning $C_{n,2}(q)$, differentiate \eqref{cn2} with respect to $q$, obtaining
\begin{equation*}
C_{n,2}'(q) = C_{n-1,1}(q) + q \cdot C_{n-1,1}'(q),
\end{equation*}
whence 
\begin{equation*}
  c_{n,2,1} = C_{n,2}'(0) = C_{n-1,1}(0),\quad \text{and} \quad c_{n,2,1} = c_{n-1,1,0} = (-1)^{n-2}.
\end{equation*}

Concerning $C_{n,12}(q)$, differentiate \eqref{cn12} with respect to $q$, obtaining
\begin{equation*}
C_{n,12}'(q) = C_{n-1,1}(q) + (q{-}1)\cdot C_{n-1,1}'(q) + C_{n-1,0}'(q),
\end{equation*}
whence
\begin{multline*}
  c_{n,12,1} = C_{n,12}'(0) = C_{n-1,1}(0) - C_{n-1,1}'(0) + C_{n-1,0}'(0) 
       \\
 = c_{n-1,1,0} - c_{n-1,1,1} + c_{n-1,0,1} = (-1)^{n-2} -(-1)^{n-1}n +  (-1)^{n-1} = (-1)^nn.
\end{multline*}
At last, concerning $C_{n,21}(q)$, we have from \eqref{cn21}
\begin{multline*}
C_{n,21}'(q) = - C_{n-1,2}(q) - (q{-}1)\cdot C_{n-1,2}'(q)
\\ 
               + C_{n-1,12}(q) + q\cdot C_{n-1,12}'(q)
\\  
               -2(q{-}1)\cdot C_{n-1,21}(q) - (q-1)^2\cdot C_{n-1,21}'(q),
\end{multline*}
so, since $C_{n-1,2}(0) = 0$ and $C_{n-1,21}(0) = 0$,
\begin{align*}
  c_{n, 21,1} &= C_{n,21}'(0) = -(-1)\cdot C_{n-1,2}'(0) + C_{n-1,12}(0) - (-1)^2\cdot C_{n,21}'(0)
\\
   &= c_{n-1,2,1} + c_{n-1,12,0} - c_{n-1,21,1} = (-1)^{n-3} +  (-1)^{n-2} - c_{n-1,21,1}
\\
   &= -c_{n-1,21,1}.
\end{align*}
Starting from $C_{2,21}(q) = q$ and $c_{2,21,1} = 1$, we deduce $c_{n,21,1}= (-1)^n$.
\end{proof}
\begin{theorem}
  \label{palindromes}
The following identities are satisfied by the polynomials $C_{n,-}(q)$.
\begin{gather}
C_{n,0}(q) = q^{2n}C_{n,0}(q^{-1}), \quad
C_{n,1}(q) = -q^{2n-1}C_{n,1}(q^{-1}), \quad
C_{n,2}(q) = -q^{2n-1}C_{n,2}(q^{-1}),   \notag
\\
C_{n,12}(q) = q^{2n-2} C_{n,12}(q^{-1}),\quad
C_{n,21}(q) = q^{2n-2}C_{n,21}(q^{-1}).  \label{palindromicidentities}
\end{gather}
In terms of the coefficients of the various polynomials, we have
\begin{gather}
  c_{n,0,i} = c_{n,0,2n-i}, \quad c_{n,1,i} = -c_{n,1,2n-1-i}, \quad c_{n,2,i} = -c_{n,2,2n-1-i},  \notag
\\
  c_{n,12,i} = c_{n,12, 2n-2-i}, \quad c_{n,21,i} = c_{n,21,2n-2-i}.  \label{coeffidentities}
\end{gather}
\end{theorem}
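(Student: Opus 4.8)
The plan is to prove the polynomial identities \eqref{palindromicidentities} first, and then to deduce the coefficient identities \eqref{coeffidentities} by comparing coefficients. Concretely, I would note that writing $C_{n,*}(q) = \sum_i c_{n,*,i}\,q^i$, any functional equation $C_{n,*}(q) = \epsilon\,q^{d}\,C_{n,*}(q^{-1})$ with $\epsilon = \pm 1$ expands to $\sum_i c_{n,*,i}\,q^i = \epsilon\sum_i c_{n,*,i}\,q^{d-i}$, so equating the coefficient of $q^i$ gives $c_{n,*,i} = \epsilon\,c_{n,*,d-i}$. Taking the pairs $(\epsilon,d)$ to be $(+1,2n)$, $(-1,2n{-}1)$, $(-1,2n{-}1)$, $(+1,2n{-}2)$, $(+1,2n{-}2)$ for the indices $0,1,2,12,21$ reproduces \eqref{coeffidentities} exactly, so it suffices to establish \eqref{palindromicidentities}.

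For those I would argue by induction on $n$. Writing $R_d(P)(q) = q^{d}P(q^{-1})$ for the reflection of a polynomial $P$ about degree $d/2$, the five claims read $C_{n,0} = R_{2n}(C_{n,0})$, $C_{n,1} = -R_{2n-1}(C_{n,1})$, $C_{n,2} = -R_{2n-1}(C_{n,2})$, $C_{n,12} = R_{2n-2}(C_{n,12})$, and $C_{n,21} = R_{2n-2}(C_{n,21})$. The base case $n=1$ I would check directly from \eqref{initialCs}: for instance $C_{1,1}(q) = -(q{-}1)$ while $-R_{1}(C_{1,1})(q) = -q\cdot\frac{q-1}{q} = -(q{-}1)$, and the remaining initial polynomials are $0$ or $1$, for which the relevant identity is immediate. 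For the inductive step I would fix $n \geq 2$, assume \eqref{palindromicidentities} at level $n-1$, and feed it into the recursions of Theorem \ref{heckerecursion}, where Proposition \ref{C121} lets me drop every term involving $C_{n-1,121}$.

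The computation is the same in every case: substitute $q \mapsto q^{-1}$ in the recursion, multiply by the target power of $q$, and replace each $C_{n-1,*}(q^{-1})$ by the inverted hypothesis $C_{n-1,*}(q^{-1}) = \epsilon_* q^{-d_*}C_{n-1,*}(q)$. Every scalar prefactor is a product of powers of $q$ and $(q{-}1)$, and these transform by $q \mapsto q^{-1}$ and $(q{-}1) \mapsto -q^{-1}(q{-}1)$, so the powers of $q$ produced are exactly what is needed to absorb the inductive shift $d_*$ and recover the original coefficient. For $C_{n,2} = q\,C_{n-1,1}$, for example, $-R_{2n-1}(C_{n,2})(q) = -q^{2n-1}C_{n,2}(q^{-1}) = -q^{2n-2}C_{n-1,1}(q^{-1}) = -q^{2n-2}\bigl(-q^{3-2n}\bigr)C_{n-1,1}(q) = q\,C_{n-1,1}(q) = C_{n,2}(q)$, where the second equality uses $C_{n,2}(q^{-1}) = q^{-1}C_{n-1,1}(q^{-1})$ and the third uses the level-$(n{-}1)$ hypothesis $(\epsilon,d) = (-1,2n{-}3)$ for $C_{n-1,1}$. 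The cases $C_{n,0}$, $C_{n,1}$, and $C_{n,12}$ run identically, each built from one or two source polynomials whose shifts combine with the reflected prefactors to land on $\pm 1$ times the recursion coefficient.

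The step that will need the most care, and the one I regard as the main obstacle, is $C_{n,21}$: its recursion \eqref{cn21} mixes three source polynomials $C_{n-1,2}$, $C_{n-1,12}$, $C_{n-1,21}$ with three distinct shifts ($2n{-}3$, $2n{-}4$, $2n{-}4$) and two distinct signs, so the cancellations must line up simultaneously rather than one factor at a time. After substituting $q \mapsto q^{-1}$ and multiplying by $q^{2n-2}$, the three terms become $q^{2n-3}(q{-}1)\,C_{n-1,2}(q^{-1})$, $q^{2n-3}\,C_{n-1,12}(q^{-1})$, and $-q^{2n-4}(q{-}1)^2\,C_{n-1,21}(q^{-1})$, and applying the hypothesis termwise returns precisely $-(q{-}1)C_{n-1,2}(q) + q\,C_{n-1,12}(q) - (q{-}1)^2 C_{n-1,21}(q) = C_{n,21}(q)$. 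The genuine content of the theorem is exactly that the five shifts $(2n,2n{-}1,2n{-}1,2n{-}2,2n{-}2)$ and signs $(+,-,-,+,+)$ are mutually compatible under all of the recursions at once; once this bookkeeping checks out in the $C_{n,21}$ case, the induction closes, giving \eqref{palindromicidentities} and hence \eqref{coeffidentities}.
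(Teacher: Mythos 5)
Your proof is correct and follows essentially the same route as the paper: induction on $n$ using the recursions of Theorem \ref{heckerecursion} (with Proposition \ref{C121} eliminating the $C_{n-1,121}$ terms), where each reflected prefactor $q\mapsto q^{-1}$, $(q-1)\mapsto -q^{-1}(q-1)$ absorbs the inductive shifts exactly as in the paper's verification of the five identities. The only differences are presentational: you run the computation from $\epsilon\,q^{d}C_{n,*}(q^{-1})$ back to $C_{n,*}(q)$ rather than forward, and you make explicit the base case $n=1$ and the passage from \eqref{palindromicidentities} to \eqref{coeffidentities}, both of which the paper leaves implicit.
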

These identities reflect certain palindromic properties of the polynomials and permit us to compute their degrees in
corollary \ref{degrees1}.   We will say a polynomial $p(x) = a_0 + a_1\, x + \cdots + a_n\,x^n$ of degree $n$ is {\em palindromic} if
\begin{equation*}
  p(x) = x^np(x^{-1}) = x^n(a_0 + a_1\,x^{-1} + \cdots + a_n\,x^{-n}) = a_n + a_{n-1}\,x + \cdots +  a_0\, x^n.
\end{equation*}
We say a polynomial of degree $n$ is {\em skew-palindromic} if $p(x) = - x^np(x^{-1})$.
Obviously, for a palindromic or a skew-palindromic polynomial the leading and trailing coefficients are both non-vanishing.  
\begin{proof}
These are all proved by induction, using the recursive formulas from theorem \ref{heckerecursion}.  
For the first identity,
\begin{align*}
  C_{n,0}(q) &= q^2\cdot C_{n-1,21}(q) - q(q{-}1)\cdot C_{n-1,1}(q)
\\
& = q^2 \cdot q^{2n-4}C_{n-1,21}(q^{-1}) - q(q{-}1)\cdot \bigl(-q^{2n-3}C_{n-1,1}(q^{-1})\bigr)
\\
     &= q^{2n}\cdot\bigl(q^{-2}C_{n-1,21}(q^{-1})\bigr) - q^{2n}\cdot\bigl(q^{-1}(q^{-1}-1)\cdot C_{n-1,1}(q^{-1})\bigr) = q^{2n}\cdot C_{n,0}(q^{-1}).
\end{align*}
For the second identity, 
\begin{align*}
  C_{n,1}(q) &= - (q{-}1)^2\cdot C_{n-1,1}(q) - (q{-}1)\cdot C_{n-1,0}(q)
\\
            &= -(q{-}1)^2\cdot (-1) \cdot q^{2n-3}C_{n-1,1}(q^{-1}) -(q{-}1)\cdot q^{2n-2}C_{n-1,0}(q^{-1})
\\
            &= q^{2n-1}\cdot\bigl((1{-}q^{-1})^2\cdot C_{n-1,1}(q^{-1}) + (q^{-1}{-}1)\cdot C_{n-1,0}(q^{-1})\bigr)
\\ 
            &= -q^{2n-1}\cdot C_{n,1}(q^{-1}).
\end{align*}
For the third identity,
\begin{multline*}
  C_{n,2}(q) =q\cdot C_{n-1,1}(q) 
           =q\cdot(-1)\cdot q^{2n-3}\cdot C_{n-1,1}(q^{-1})
           \\
           = -q^{2n-1}\cdot\bigl( q^{-1}\cdot  C_{n-1,1}(q^{-1})
           = -q^{2n-1}\cdot C_{n,2}(q^{-1}).
\end{multline*}
For the fourth identity,
\begin{align*}
  C_{n,12}(q) &= (q{-}1)\cdot C_{n-1,1}(q) + C_{n-1,0}(q)
\\
             &=(q{-}1)\cdot (-1)\cdot q^{2n-3} \cdot C_{n-1,1}(q^{-1}) + q^{2n-2}\cdot C_{n-1,0}(q^{-1})
\\
             &=q^{2n-2}\cdot \bigl(q^{-1}{-}1)\cdot C_{n-1,1}(q^{-1}) + C_{n-1,0}(q^{-1}) \bigr)
\\
             &= q^{2n-2}\cdot C_{n,12}(q^{-1}).
\end{align*}
Finally, for the fifth identity,
\begin{align*}\begin{split}
  C_{n,21}(q) &= -(q{-}1)\cdot C_{n-1,2}(q) + q\cdot C_{n-1,12}(q) - (q{-}1)^2\cdot C_{n-1,21}(q)
\\ 
  &= -(q{-}1)\cdot (-1)\cdot q^{2n-3}\cdot C_{n-1,2}(q^{-1}) + q \cdot q^{2n-4} \cdot C_{n-1,12}(q^{-1})
\\ 
  &\hspace{18em}  - (q{-}1)^2\cdot q^{2n-4}\cdot C_{n-1,21}(q^{-1})
\\
 &= q^{2n-2}\cdot \bigl( q^{-1}{-}1)^2 \cdot C_{n-1,2}(q^{-1}) + q^{-1}\cdot C_{n-1,12}(q^{-1}) - (q^{-1}{-}1)^2\cdot C_{n-1,21}(q^{-1})\bigr)
\\
 &= q^{2n-2}\cdot C_{n,21}(q^{-1}).   \qedhere
\end{split}
\end{align*}
\end{proof}
Now we state the implications for the polyomials $C_{n,*}(q)$.
\begin{corollary}
  \label{degrees1}
The degrees of the polynomials $C_{n,*}(q)$ are as follows.
\begin{gather}
 \deg(C_{n,0}) = 2n{-}1, \quad \deg(C_{n,1}) = 2n{-}1 , \quad \deg(C_{n,2}) = 2n{-}2,  \notag
\\
\deg(C_{n,12}) = 2n{-}2, \quad \text{and} \quad \deg(C_{n,21}) = 2n{-}3.   \label{degreevalues}
\end{gather}
\end{corollary}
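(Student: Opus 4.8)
The plan is to read off each degree directly from the self-reciprocal identities of Theorem~\ref{palindromes}, once the order of vanishing of each polynomial at $q=0$ is known. The starting observation is purely formal: if a polynomial $p(q) = \sum_i a_i q^i$ satisfies $p(q) = \pm q^m p(q^{-1})$, then comparing coefficients gives $a_i = \pm a_{m-i}$ for every $i$, so the set of exponents carrying nonzero coefficients is symmetric about $m/2$. In particular, if $q^e$ is the lowest power appearing with a nonzero coefficient, then $q^{m-e}$ is the highest, and hence $\deg(p) = m-e$. Thus each degree is completely determined by $m$ (read off from the identities of Theorem~\ref{palindromes}) together with the order of vanishing $e$ at the origin.

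Second, I would pin down $e$ for each family from the explicit low-order coefficients already in hand. By Proposition~\ref{trailing}, the polynomials $C_{n,1}$ and $C_{n,12}$ have nonzero constant term $(-1)^{n-1}$, so $e=0$ for both; whereas $C_{n,0}$, $C_{n,2}$, and $C_{n,21}$ vanish at $q=0$, and Proposition~\ref{degreeone} exhibits a nonzero degree-one coefficient for each (namely $(-1)^n$, $(-1)^n$, and $(-1)^n$), so $e=1$ for these three.

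Third, I would simply combine $m$ and $e$. For $C_{n,0}$ the identity has $m=2n$ with $e=1$, giving degree $2n{-}1$; for $C_{n,1}$, $m=2n{-}1$ with $e=0$, giving $2n{-}1$; for $C_{n,2}$, $m=2n{-}1$ with $e=1$, giving $2n{-}2$; for $C_{n,12}$, $m=2n{-}2$ with $e=0$, giving $2n{-}2$; and for $C_{n,21}$, $m=2n{-}2$ with $e=1$, giving $2n{-}3$. These are precisely the values asserted in \eqref{degreevalues}.

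The only real subtlety, and the single place where the argument genuinely needs the earlier propositions rather than the symmetry alone, is that a palindromic identity by itself says nothing about the actual degree: it only links the lowest and highest nonzero terms. Hence the crux is guaranteeing that both endpoints are honestly nonzero, which is exactly why the explicit constant terms of Proposition~\ref{trailing} and the degree-one coefficients of Proposition~\ref{degreeone} are indispensable. Since Proposition~\ref{degreeone} assumes $n \geq 2$, the degree formulas for the three polynomials vanishing at the origin are to be understood for $n \geq 2$, with the degenerate case $n=1$ checked directly against the initial values in \eqref{initialCs}.
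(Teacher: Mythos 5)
Your proof is correct and follows essentially the same route as the paper's: both rest on combining the functional identities of Theorem~\ref{palindromes} with the nonvanishing of the lowest-order coefficients from Propositions~\ref{trailing} and~\ref{degreeone} to pin down both ends of the coefficient support. The only cosmetic difference is that the paper factors out $q$ explicitly (writing, e.g., $P_{n,0}(q)=q^{-1}C_{n,0}(q)$ and showing it is palindromic of degree $2n{-}2$) where you invoke the support symmetry $a_i = \pm a_{m-i}$ directly; your added remark restricting the statement to $n\geq 2$ (since $C_{1,0}$, $C_{1,2}$, $C_{1,21}$ are identically zero) is a precision the paper leaves implicit.
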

\begin{proof}
Consider first $C_{n,0}(q)$.   We know from proposition \ref{trailing} that $C_{n,0}(0){=}0$, so $q$ is a
factor.  That is, $P_{n,0}(q) = q^{-1}C_{n,0}(q)$ is also a polynomial, and its trailing coefficient is $c_{n,0,1} \neq 0$ by proposition
\ref{degreeone}. We observe
\begin{equation*}
  P_{n,0}(q) = q^{-1}C_{n,0}(q) = q^{-1}\cdot q^{2n}C_{n,0}(q^{-1}) = q^{2n-2}\cdot q\,C_{n,0}(q^{-1}) = q^{2n-2}\cdot P_{n,0}(q^{-1}).
\end{equation*}
Thus, $C_{n,0}(q)$ is $q$ times a palindromic polynomial of degree $2n{-}2$, which means $C_{n,0}(q)$ has degree $2n{-}1$. 
Similarly, we conclude that $C_{n,2}(q)$ and $C_{n,21}(q)$ are, respectively, $q$ times a skew-palindromic polynomial of 
degree $2n{-}3$ and $q$ times a palindromic polynomial of degree $2n{-}4$.  Thus $C_{n,2}(q)$ has degree $2n{-}2$ and
$C_{n,21}(q)$ has degree $2n{-}3$. Since $C_{n,1}(0){\neq}0$ and $C_{n,12}(0){\neq}0$, the identities stated in theorem
yield that $C_{n,1}(q)$ is a skew-palindromic polynomial of degree $2n{-}1$ and that $C_{n,12}(q)$ is a palindromic polynomial
of degree $2n{-}2$.  
\end{proof}
 Accordingly, set
\begin{align}
C_{n,0}(q) &= \sum_{i=1}^{2n-1} c_{n,0,i}q^i, &
  C_{n,1}(q) &= \sum_{i=0}^{2n-1} c_{n,1,i}q^i, & 
  C_{n,2}(q) &= \sum_{i=1}^{2n-2} c_{n,2,i}q^i,   \notag
\\
  C_{n,12}(q) &= \sum_{i=0}^{2n-2} c_{n,12,i}q^i,
& &\text{and} & 
 C_{n,21}(q) &= \sum_{i=1}^{2n-3} c_{n,21,i}q^i.  \label{Cexpansions}
\end{align}
\section{Obtaining  the Polynomial Invariants}   \label{Polys}
Following the construction given in 
\cite[p.288]{Jones_poly86}
we work over the function field $K = \bC(q,z)$
and follow their recipes to obtain expressions for the two-variable HOMFLY-PT polynomials,
the one-variable Jones polynomials $V_{W(3,n)}(t)$,
and the Alexander polynomials $\Delta_{W(3,n)}(t)$.
The expressions are subsequently refined to incorporate information obtained in section \ref{Hecke}.
From this point we evaluate the span of the Jones polynomial $V_{W(3,n)}(t)$ in proposition \ref{span}, a 
result already known to Kauffman \cite[Theorem~2.10]{States}, where we demonstrate how to use 
equations \eqref{trailingcoefftable} and \eqref{coeffidentities}.

Let $H_{N+1}$ be the Hecke algebra over $K$ corresponding
to $q$ with $N$ generators as in definition \ref{Heckealgebras}. The
starting point is the following theorem.
\begin{theorem}
  \label{traces}
For $N \geq 1$ there is a family of trace functions $\Tr \colon H_{N+1} \ra K$ compatible with the inclusions $H_N \ra H_{N+1}$ satisfying
\begin{enumerate}
 \item $\Tr(1) = 1$,
\item $\Tr$ is $K$-linear and $ \Tr(ab) = \Tr(ba)$,
\item If $a, b \in H_N$, then $\Tr (aT_Nb) = z\Tr(ab)$.
\end{enumerate}
\end{theorem}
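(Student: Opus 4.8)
The plan is to construct the functionals by induction on $N$, taking as the crucial structural input the fact that $H_{N+1}$ is free as a left module over the subalgebra $H_N$, with basis
\[
  \{1,\, T_N,\, T_NT_{N-1},\, \ldots,\, T_NT_{N-1}\cdots T_1\}.
\]
This is the standard reduced-word basis, and it may be obtained either by a spanning argument combined with a count against the known dimension $\dim H_{N+1} = (N{+}1)!$, or by invoking the normal form for reduced words in the symmetric group. Writing $v_0 = 1$ and $v_k = T_N w_k$ with $w_k = T_{N-1}\cdots T_{N-k+1} \in H_N$ for $1 \leq k \leq N$, every $x \in H_{N+1}$ has a unique expression $x = a_0 + \sum_{k=1}^N a_k\,T_N w_k$ with all $a_k \in H_N$. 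Assuming inductively that $\Tr_N$ on $H_N$ has been built (the base case being $N=1$, where $H_2$ has basis $\{1, T_1\}$ and $\Tr(1)=1$, $\Tr(T_1)=z$ are forced by the three properties), I would define $\Tr(x) = \Tr_N(a_0) + z\sum_{k=1}^N \Tr_N(a_k w_k)$, which is precisely the formula forced by linearity together with properties (1)--(3). Because the coefficients $a_k$ are uniquely determined, this is automatically well-defined and $K$-linear, and it restricts to $\Tr_N$ on $H_N$, giving the required compatibility with the inclusions $H_N \ra H_{N+1}$.

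It then remains to verify the three listed properties for the extended functional, and this is where the real work lies. Property (1) is immediate. The genuine obstacles are (2) and (3), and I expect property (2), the trace identity $\Tr(xy) = \Tr(yx)$, to be the main difficulty, since the definition privileges a one-sided normal form and so treats $x$ and $y$ asymmetrically.

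For property (3), I would take general $a, b \in H_N$ and compute $\Tr(aT_Nb)$ by reducing $aT_Nb$ to the normal form above. The essential point is to expand $T_N b$ in the module basis; multiplying that expansion on the left by $a$ and applying the definition, the claim $\Tr(aT_Nb) = z\,\Tr_N(ab)$ should follow from the defining relations already available for $\Tr_N$ on $H_N$. Here the Hecke relation $T_N^2 = (q-1)T_N + q$ and the braid relation $T_NT_{N-1}T_N = T_{N-1}T_NT_{N-1}$ are exactly what convert products involving two occurrences of $T_N$ back into the one-$T_N$ normal form, and one must carefully track the powers of $z$ they generate.

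For property (2), I would first reduce to the case where one argument is a single generator: by $K$-bilinearity and an induction on the length of $x$ as a word in the generators $T_1, \ldots, T_N$, it suffices to establish $\Tr(T_i h) = \Tr(h T_i)$ for every $h \in H_{N+1}$ and every generator $T_i$. For $i < N$ one has $T_i \in H_N$, so $T_i$ either commutes past $T_N$ in the relevant terms or interacts only through the braid relation with $T_{N-1}$, and the identity should descend from the inductively assumed trace property of $\Tr_N$ together with property (3). The case $i = N$ is the crux: here I would expand $h$ in the module basis and compute $\Tr(T_N h)$ and $\Tr(h T_N)$ directly, using property (3) and the two relations above to reconcile them. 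The persistent obstacle throughout is the bookkeeping forced by the asymmetry of the normal form, so I would organize the computation to exploit the inductive trace property of $\Tr_N$ at each stage rather than expanding every expression fully into the basis at once.
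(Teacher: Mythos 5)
First, a point of reference: the paper does not prove Theorem \ref{traces} at all. It is Ocneanu's trace theorem, imported from \cite{Jones_poly86}, and the paper's only gloss is the remark that property 3 lets one evaluate $\Tr$ on basis elements by induction. So your proposal can only be measured against the standard proof in the literature, which is in fact the route you chose: induct up the tower $H_N \subset H_{N+1}$, use the free left $H_N$-module basis $\{1, T_N, T_NT_{N-1}, \ldots, T_NT_{N-1}\cdots T_1\}$, and define $\Tr$ by the formula forced by properties (1)--(3). That structural input is correct (spanning plus the dimension count $(N{+}1)! = (N{+}1)\cdot N!$), your base case is right, uniqueness of the module coefficients does give well-definedness and compatibility with $H_N \ra H_{N+1}$, and your reduction of property (2) to the single-generator identity $\Tr(T_i h) = \Tr(h T_i)$ is sound.

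However, as written the proposal stops short of the theorem's actual content, and it misplaces where the work lies. Property (3) is essentially formal: for $a, b \in H_N$ the element $aT_Nb$ contains only \emph{one} occurrence of $T_N$, so no quadratic or braid relation is needed. Writing $b = b_0 + \sum_j b_j T_{N-1} u_j$ with $b_0, b_j \in H_{N-1}$ and $u_j$ a word in $T_{N-2}, T_{N-3}, \ldots$, and using that $H_{N-1}$ commutes with $T_N$, one gets $aT_Nb = (ab_0)\,T_N + \sum_j (ab_j)\,T_NT_{N-1}u_j$, which is already in normal form, so your definition yields $\Tr(aT_Nb) = z\,\Tr_N(ab)$ on the nose; your remark about ``products involving two occurrences of $T_N$'' does not apply here. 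The genuine crux, which you correctly flag but never carry out, is property (2) in the case $i = N$: there $h$ itself contains $T_N$, and one must reduce $T_N a T_N$ for $a \in H_N$ by splitting $a = a' + \sum_\beta a'_\beta T_{N-1} a''_\beta$ over $H_{N-1}$, applying $T_N^2 = (q{-}1)T_N + q$ to the first part and $T_NT_{N-1}T_N = T_{N-1}T_NT_{N-1}$ to the second, and then matching $\Tr(T_N a T_N b)$ against $\Tr(a T_N b T_N)$ using property (3) together with the inductively known trace property of $\Tr_N$. That computation is the entire substance of Ocneanu's theorem; without it, what you have is a correct plan with the right scaffolding rather than a proof.
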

Property 3 enables the calculation of $\Tr$ on basis elements of $H_{N+1}$ through use of the defining relations and induction. 
For $H_3$, note that
\begin{equation*}
  \Tr(T_1) = \Tr(T_2) = z, \quad \Tr(T_1T_2) = \Tr(T_2T_1) = z^2, \quad \Tr(T_1T_2T_1) = z \Tr(T_1^2) = z \bigl((q{-}1)z + q\bigr),
\end{equation*}
and we put $w=1{-}q{+}z$.  
The next step toward the polynomial invariants of the knot that is the closure of
the braid $\alpha \in B_{N+1}$ is given by the formula
\begin{equation*}
  V_{\alpha}(q,z) = \Bigl(\frac{1}{z}\Bigr)^{(N + e(\alpha))/2}
                        \cdot \Bigl( \frac{q}{w} \Bigr)^{(N-e(\alpha))/2}\cdot \Tr\bigl(\rho(\alpha)\bigr),
\end{equation*}
where $e(\alpha)$ is the exponent sum of the word $\alpha$. The expression defines an element in the quadratic extension $K(\sqrt{q/zw})$. 
For the weaving knot $W(3,n)$, viewed as the closure of $(\sigma_1\sigma_2^{-1})^n$, we have the exponent sum $e=0$, and $N=2$, and 
\begin{equation*}
 \rho\bigl( (\sigma_1\sigma_2^{-1})^n \bigr)= (T_1T_2^{-1})^n 
        = q^{-n}\bigl( C_{n,0}(q) +  C_{n,1}(q)\cdot T_1 + C_{n,2}(q) \cdot T_2 + C_{n,12}(q) \cdot T_1T_2  + C_{n,21}(q) \cdot T_2T_1\bigr) ,
\end{equation*}
thanks to proposition \ref{C121}, which says the expression for $(T_1T_2^{-1})^n$ requires only the use of the basis elements
$1$, $T_1$, $T_2$,  $T_1T_2$ and $T_2T_1$. Then we have 
\begin{multline}
  V_{(\sigma_1\sigma_2^{-1})^n}(q,z) \\  = \Bigl(\frac{1}{z}\Bigr)\cdot \Bigl( \frac{q}{w} \Bigr)\cdot q^{-n}
     \Tr \bigl(C_{n,0}(q) +  C_{n,1}(q)\cdot T_1 + C_{n,2}(q) \cdot T_2 + C_{n,12}(q) \cdot T_1T_2  
               + C_{n,21}(q) \cdot T_2T_1   \bigr)
\\
  = \Bigl(\frac{q}{zw}\Bigr)\cdot   q^{-n} \cdot \bigl( C_{n,0}(q) +   C_{n,1}(q) \cdot z + C_{n,2}(q) \cdot z + C_{n,12}(q) \cdot z^2
               + C_{n,21} (q) \cdot z^2 \bigr), \label{Heckeoutput}
\end{multline}
using the facts that $\Tr T_1 = \Tr T_2 = z$ and $\Tr T_1T_2 = \Tr T_2T_1 = z^2$. 
This expression is the starting point for our manipulations. 

Following \cite{Jones_poly86}, we point out that the universal skein invariant 
$P_{W(3,n)}(\ell, m)$, an element of the Laurent polynomial ring $\bZ[\ell, \ell^{-1}, m , m^{-1}]$,  is obtained by rewriting
$V_{(\sigma_1\sigma_2^{-1})^n}(q,z)$ in terms of $\ell = i(z/w)^{1/2}$ and $m = i(q^{-1/2} - q)$, a task
easily managed in a computer algebra system by simplifying $V_{(\sigma_1\sigma_2^{-1})^n}(q,z)$ with
respect to side relations.  Starting from 
$P_{W(3,n)}(\ell, m)$, the Jones polynomial $V_{W(3,n)}(t)$ is obtained by 
setting $\ell = it $ and $m = i (t^{1/2}{-}t^{-1/2}) $, the Alexander polynomial $\Delta_{W(3,n)}(t)$ is obtained
by setting $\ell = i$ and $m = i (t^{1/2}{-}t^{-1/2})$, and the HOMFLY-PT polynomial is obtained by
setting $\ell = ia$ and $m = iz$.  We have no specific use for the HOMFLY-PT polynomial
in this paper, so we content ourselves with a few values in section \ref{PolynomialsExtra}. 

To obtain the  Alexander polynomial  from $V_{(\sigma_1\sigma_2^{-1})^n}(q,z)$, it is useful to first rewrite
\begin{equation*}
  V_{(\sigma_1\sigma_2^{-1})^n}(q,z)
  =  q^{-n+1} \cdot \bigl( C_{n,0}(q)\cdot(zw)^{-1} +   \bigl(C_{n,1}(q){+}C_{n,2}(q)\bigr) \cdot w^{-1} 
       + \bigl(C_{n,12}(q) {+} C_{n,21} (q)\bigr) \cdot zw^{-1} \bigr).
\end{equation*}
First set $q=t$ and make the substitutions
\begin{equation*}
  z = \frac{\ell^2(t-1)}{1+\ell^2}, \quad w = 1- q + z = 1- t + z = \frac{-1(t{-}1)}{1+\ell^2}
\end{equation*}
to obtain an expression
\begin{multline*}
  t^{-n+1}\cdot \Bigl( C_{n,0}(t) \cdot \biggl(\frac{(1+\ell^2)}{\ell^2}\biggr)^2\cdot \frac{1}{(t-1)t}
   + \bigl(C_{n,1}(t) + C_{n,2}(t)\bigr)\cdot \frac{(-1)\dot( 1 + \ell^2)}{t-1} 
\\ 
          + \bigl(C_{n,12}(t)    + C_{n,21}(t)\bigr)\cdot (-1) \cdot \ell^2\Bigr)
\end{multline*}
Now make the substitution $\ell = i$ and we arrive at
\begin{equation}
  \label{eq:alexander}
  \Delta_{W(3,n)}(t) = t^{-n+1}  \bigl(C_{n, 12}(t) + C_{n, 21}(t)\bigr).
\end{equation}
Evidently a lot of information from the braid representation of $W(3,n)$ has been lost. 
To see what remains, corollary \ref{degrees1} says that the degree of $C_{n,12}(t)$ is $2n{-}2$ and
the degree of $C_{n,21}(t)$ is $2n{-}3$. It follows that the degree of $\Delta_{W(3,n)}(t)$ is 
$(2n{-}2) - n + 1= n{-}1$.  Moreover, the lowest order non-vanishing coefficients are
$c_{n, 12, 0} = (-1)^n $ and $c_{n,21,1} = (-1)^n$. By theorem \ref{palindromes} we also have
$c_{n, 12, 2n-2} = c_{n, 12, 0} = (-1)^n$ and $c_{n, 21, 2n-3} = c_{n, 21, 1}= (-1)^n$. Thus,
\begin{equation}
  \label{eq:alexanderw3n}
  \Delta_{W(3,n)}(t) =  a_0 + \sum_{s>0} a_s(t^s+t^{-s}) = (-1)^n \cdot t^{-n+1} + \cdots + (-1)^n\cdot t^{n-1}.
\end{equation}
\begin{theorem}
  \label{HFhomology}
The Seifert genus of $W(3,n)$ is $n{-}1$, and the complement of $W(3,n)$ is fibered over $S^1$.
\end{theorem}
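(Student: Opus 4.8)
The plan is to deduce both statements from the single computation already recorded in equation \eqref{eq:alexanderw3n}, namely that $\Delta_{W(3,n)}(t)$ is a symmetric Laurent polynomial whose extreme terms are $(-1)^n\,t^{\pm(n-1)}$, together with the fact that $W(3,n)$ is alternating and the description of its knot Floer homology supplied by \cite[Theorem 1.3]{OSFloer}. That theorem says that for an alternating knot $K$ of signature $\sigma$ the hat-flavored knot Floer group $\widehat{HFK}(K,s)$ is supported in the single Maslov grading $m = s + \sigma/2$ and is free of rank $|a_s|$, where $a_s$ is the coefficient of $t^s$ in $\Delta_K(t)$. Since proposition \ref{weavingsignature} gives $\sigma(W(3,n)) = 0$, the ranks of the knot Floer groups of $W(3,n)$ can be read off directly from the coefficients computed in section \ref{Polys}.

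First I would pin down the genus. By the genus-detection theorem for knot Floer homology (Ozsv\'ath--Szab\'o), the Seifert genus of any knot $K$ equals $\max\{\, s : \widehat{HFK}(K,s) \neq 0 \,\}$. For $W(3,n)$ this maximum is the largest $s$ with $a_s \neq 0$; by corollary \ref{degrees1} and equation \eqref{eq:alexanderw3n} the top nonvanishing coefficient is $a_{n-1} = (-1)^n \neq 0$, so $g(W(3,n)) = n-1$. Equivalently, one may invoke the classical theorem of Crowell and Murasugi that for an alternating knot the Seifert genus equals half the breadth of the Alexander polynomial, which here is $\tfrac{1}{2}\cdot 2(n-1) = n-1$, the minimizing surface being the one produced by Seifert's algorithm on the reduced alternating diagram.

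Next I would establish fiberedness using the theorem that knot Floer homology detects fibered knots: a knot $K$ of genus $g$ is fibered if and only if $\widehat{HFK}(K,g) \cong \bZ$, proved by Ghiggini in genus one and by Ni in general. By the previous paragraph $g = n-1$, and by \cite[Theorem 1.3]{OSFloer} together with $\sigma = 0$ the rank of $\widehat{HFK}(W(3,n),\,n-1)$ equals $|a_{n-1}| = |(-1)^n| = 1$. Hence the top group is $\bZ$, so $W(3,n)$ is fibered and its complement fibers over $S^1$ with fiber a minimal-genus Seifert surface of genus $n-1$.

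The steps are short because all the genuinely hard input is imported: the alternating-knot computation of \cite{OSFloer} and the two detection theorems. The main thing to verify is bookkeeping — that the extreme coefficient of $\Delta_{W(3,n)}(t)$ is genuinely $\pm 1$ (exactly the content of \eqref{eq:alexanderw3n}, where the leading and trailing coefficients were pinned down via the palindrome identities of theorem \ref{palindromes} together with propositions \ref{trailing} and \ref{degreeone}), and that the signature vanishes, so that the Floer ranks coincide with the absolute values of the Alexander coefficients. I expect no obstacle beyond invoking the detection results with their correct statements; the one point that warrants care is the genus equality, since in general the degree of the Alexander polynomial only bounds $2g$ from below, and it is the Floer-theoretic genus-detection statement (or, classically, the Crowell--Murasugi theorem for alternating knots) — not the degree alone — that forces $g = n-1$.
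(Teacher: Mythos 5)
Your proposal is correct and follows essentially the same route as the paper: vanishing signature plus \cite[Theorem 1.3]{OSFloer} to read off $\widehat{HFK}$ from the Alexander coefficients, then genus detection (the paper's citation of \cite[Theorem 1.2]{OSFloer2}) and the Ghiggini--Ni fiberedness criterion (the paper's citation of \cite[Theorem 2.5]{Manolescu}) applied to the top group $\widehat{HFK}(W(3,n),n{-}1)\iso\bZ$. The only cosmetic difference is that you spell out the bookkeeping for $a_{n-1}=(-1)^n$ and mention the Crowell--Murasugi alternative, both of which the paper leaves implicit.
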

\begin{proof}
We know the signature   of $W(3,n)$ is zero, by Proposition \ref{weavingsignature}, so 
we apply \cite[Theorem 1.3]{OSFloer} relating the coefficients of the Alexander polynomial and the signature 
of $W(3,n)$ to the ranks of the Heegard-Floer homology groups of $S^3$ associated to $W(3,n)$.  The result is
  \begin{equation*}
    \widehat{HFK}_s(S^3, W(3,n), s) =
    \begin{cases}
      \bZ^{\abs{a_s}}, \quad \text{$0 \leq s \leq n{-}1$},
 \\
      0, \quad \text{else}.
    \end{cases}
  \end{equation*}
By \cite[Theorem 1.2]{OSFloer2}, Seifert genus of $W(3,n)$ is $n{-}1$.  
Since we have explicitly
\begin{equation*}
   \widehat{HFK}_s(S^3, W(3,n), n{-}1) \iso \bZ,
\end{equation*}
 \cite[Theorem 2.5]{Manolescu} says that the complement of  $W(3,n)$ is fibered over $S^1$.
\end{proof}

Turning to the Jones polynomial, we follow a similar scheme, but the details are necessarily more complicated.  To start, the substitutions
\begin{equation*}
  q = t, \quad  z = \frac{t^2}{1+t}, \quad w = \frac{1}{1+t}
\end{equation*}
in \eqref{Heckeoutput} lead to the one-variable Jones polynomial
\begin{multline*}
  V_{W(3,n)}(t) = \frac{t(1{+}t)^2}{t^2} \cdot t^{-n} \cdot
     \Bigl( C_{n,0}(t) + (C_{n,1}(t)+C_{n,2}(t))\cdot \frac{t^2}{1{+}t} + (C_{n,12}(t) + C_{n,21}(t)) \cdot \frac{t^4}{(1{+}t)^2}\Bigr)
 \\
  = t^{-n-1}\cdot\bigl( (1{+}t)^2\cdot C_{n,0}(t) + (1{+}t)\cdot( C_{n,1}(t) + C_{n,2}(t) )\cdot t^2 + (C_{n,12}(t) + C_{n,21}(t))\cdot t^4 \bigr).
\end{multline*}
\begin{example}
  For $W(3,1)$, which is the unknot, we have
  \begin{align*}
 V_{W(3,1)}(t) 
   &=  t^{-2}\cdot\bigl( (1{+}t)^2\cdot C_{1,0}(t) + (1{+}t)\cdot( C_{1,1}(t) + C_{1,2}(t) )\cdot t^2 + (C_{1,12}(t) + C_{1,21}(t))\cdot t^4 \bigr) 
\\
  &=  t^{-2}\cdot\bigl( (1{+}t)^2\cdot 0 + (1{+}t)\cdot(-(t-1) + 0 )\cdot t^2 + (1 + 0 )\cdot t^4 \bigr)
\\
  &= t^{-2}\cdot ( (1{-}t^2) t^2 + t^4 ) = 1.
  \end{align*}
\end{example}
\begin{example} \label{jonesfig8knot}
  For $W(3,2)$, which is the figure-8 knot, we have
  \begin{align*}
    V_{W(3,2)}(t) 
&=t^{-3}\cdot \bigl( (1{+}t)^2\cdot C_{2,0}(t) + (1{+}t)\cdot( C_{2,1}(t) + C_{2,2}(t) )\cdot t^2 
                              + (C_{2,12}(t) + C_{2,21}(t))\cdot t^4 \bigr)
\\
&=t^{-3}\cdot \bigl( (1{+}t)^2\cdot t(t{-}1)^2 +(1{+}t)\cdot( (t{-}1)^3 - t(t{-}1)  ) \cdot t^2
                              +( -(t{-}1)^2+   t ) \cdot t^4 \bigr) 
\\
&= t^{-3}\cdot \bigl( t^5 - t^4 + t^3 -t^2 + t \bigr) = t^2 - t + 1 -t^{-1} + t^{-2}
  \end{align*}
\end{example}

Now we take a closer look at the formal expression 
\begin{multline*}
  V_{W(3,n)}(t) =
 \\
  = t^{-n-1}\cdot\bigl( (1{+}t)^2\cdot C_{n,0}(t) + (1{+}t)\cdot( C_{n,1}(t) + C_{n,2}(t) )\cdot t^2 + (C_{n,12}(t) + C_{n,21}(t))\cdot t^4 \bigr)
\end{multline*}
for the Jones polynomial of the weaving knot $W(3,n)$. 

Incorporating the formal expansions given in equations \eqref{Cexpansions}, we have
\begin{align}
 V_{W(3,n)}(t) 
&= t^{-n-1}\cdot \bigl( (1{+}t)^2\cdot C_{n,0}(t) + (t^2{+}t^3)\cdot( C_{n,1}(t) + C_{n,2}(t) ) + t^4 \cdot (C_{n,12}(t) + C_{n,21}(t)) \bigr) \notag
\\
\begin{split}
  &=  t^{-n-1}\cdot \Biggl( (1{+}t)^2\cdot \biggl(\sum_{i=1}^{2n-1} c_{n,0,i}t^i\biggr) 
\\
     & \hspace{0.15\linewidth} + (t^2{+}t^3)\cdot \Bigl( \sum_{i=0}^{2n-1} c_{n,1,i}t^i + \sum_{i=1}^{2n-2} c_{n,2,i}t^i \Bigr) 
\\
        & \hspace{0.30\linewidth} + t^4 \cdot \biggl(\sum_{i=0}^{2n-2} c_{n,12,i}t^i + \sum_{i=1}^{2n-3} c_{n,21,i}t^i\biggr) \Biggr). \label{Vformal1}
\end{split} 
\\
&= t^{-n-1}\cdot P(t) =  t^{-n-1}\cdot( p_0 + p_1\,t + p_2 \, t^2 + p_3\, t^3 + p_4\, t^4 + \cdots). \label{Vformal2}
\end{align}
The first piece of information about the Jones polynomial $V_{W(3,n)}(t)$ we obtain by using results about the $C_{n,-}$ is the following
fact, due to Kauffmann \cite[Theorem~2.10]{States}.
\begin{proposition} \label{span}
  The span of the Jones polynomial $V_{W(3,n)}(t)$ is $2n$, and the trailing and leading coefficients are $(-1)^n$.
\end{proposition}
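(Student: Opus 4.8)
The plan is to work entirely with the polynomial $P(t)$ defined by $V_{W(3,n)}(t) = t^{-n-1}P(t)$ in \eqref{Vformal2}, since multiplying by the unit $t^{-n-1}$ shifts every exponent by a fixed amount and therefore preserves the span as well as the trailing and leading coefficients. Thus it suffices to show that $P(t)$ has trailing term $(-1)^n\,t$ and leading term $(-1)^n\,t^{2n+1}$; this gives span $(2n+1)-1 = 2n$ and locates the two extreme coefficients of $V_{W(3,n)}(t)$ at $t^{-n}$ and $t^n$, both equal to $(-1)^n$.

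First I would pin down the trailing term. In the three summands of \eqref{Vformal1} the prefactors $(1+t)^2$, $t^2+t^3$, and $t^4$ force lowest possible exponents $1$, $2$, and $4$ respectively (using that $C_{n,0}$ and $C_{n,21}$ have no constant term, while $C_{n,1}$ and $C_{n,12}$ do). Hence only the first summand contributes at the bottom, and its lowest term is $c_{n,0,1}\,t = (-1)^n\,t$ by Proposition \ref{degreeone}. This settles the trailing coefficient with no cancellation to worry about.

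The leading term is the delicate point, and I expect it to be the main obstacle. Each of the second and third summands \emph{a priori} reaches degree $2n+2$: Corollary \ref{degrees1} gives $\deg C_{n,1} = 2n-1$ and $\deg C_{n,12} = 2n-2$, so $(t^2{+}t^3)C_{n,1}$ and $t^4 C_{n,12}$ both have top degree $2n+2$. Using the skew-palindrome identity for $C_{n,1}$ to get $c_{n,1,2n-1} = -c_{n,1,0} = (-1)^n$ and the palindrome identity for $C_{n,12}$ to get $c_{n,12,2n-2} = c_{n,12,0} = (-1)^{n-1}$ (Theorem \ref{palindromes} together with Proposition \ref{trailing}), the coefficient of $t^{2n+2}$ in $P(t)$ is $(-1)^n + (-1)^{n-1} = 0$. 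So the two nominal leading terms cancel, and one is forced to descend one degree.

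Finally I would compute the coefficient of $t^{2n+1}$ in $P(t)$ by collecting contributions from all three summands. This is where both Proposition \ref{degreeone} and the identities of Theorem \ref{palindromes} become indispensable: palindromy and skew-palindromy convert the required high-degree coefficients (such as $c_{n,0,2n-1}$, $c_{n,1,2n-2}$, $c_{n,2,2n-2}$, $c_{n,12,2n-3}$, $c_{n,21,2n-3}$) into the low-degree data $c_{n,0,1}$, $c_{n,1,1}$, $c_{n,2,1}$, $c_{n,12,1}$, $c_{n,21,1}$ already computed in Propositions \ref{trailing} and \ref{degreeone}. After the bookkeeping, the $n$-dependent contributions (the terms proportional to $n+1$) cancel in pairs between the second and third summands, leaving the coefficient of $t^{2n+1}$ equal to $(-1)^n$. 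Since this is nonzero, $\deg P = 2n+1$ with leading coefficient $(-1)^n$, which completes the proof. The only genuine labor is tracking the $t^{2n+1}$ coefficient carefully; the conceptual content is the single cancellation at degree $2n+2$ that forces us to look one step further down.
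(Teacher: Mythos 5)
Your proposal is correct and follows essentially the same route as the paper's own proof: both reduce to the polynomial factor $P(t)$, identify the trailing coefficient $p_1 = c_{n,0,1} = (-1)^n$, observe that the nominal top term at degree $2n{+}2$ cancels because $c_{n,1,2n-1} + c_{n,12,2n-2} = -c_{n,1,0} + c_{n,12,0} = 0$, and then use the identities of Theorem \ref{palindromes} together with Propositions \ref{trailing} and \ref{degreeone} to evaluate the coefficient of $t^{2n+1}$ as $(-1)^n$. The only difference is presentational: the paper writes out the six-term sum for that coefficient explicitly, whereas you describe the same bookkeeping in prose.
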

\begin{proof}
We observe that $  p_0 = P(0) = 1\cdot c_{n,0,0} = 0 $ by proposition \ref{trailing}, so the lowest non-zero term in $V_{W(3,n)}(t)$ is
$t^{-n-1}\cdot p_1\,t$. Clearly, $p_1 = c_{n,0,1} = (-1)^{n}$ by proposition \ref{degreeone}. 

One identifies the top degree term in the polynomial factor of \eqref{Vformal1} as the term of degree $2n{+}2$ with coefficient
\begin{equation*}
  c_{n,1,2n-1} + c_{n,12,2n-2} = -c_{n,1,0} + c_{n,12,0} = -(-1)^{n-1} + (-1)^{n-1} = 0,
\end{equation*}
where we use the palindromic equations \eqref{coeffidentities} and the table of trailing coefficients
\eqref{trailingcoefftable} to do the computation. 

Turning to the term of degree $2n{+}1$, we find the coefficient is
\begin{align*}
  c_{n,0,2n-1} &+ c_{n,1,2n-1} + c_{n,1, 2n-2} + c_{n,2,2n-2} + c_{n,12, 2n-3} + c_{n,21,2n-3}
\\
 &= c_{n,0,1} - c_{n,1,0} - c_{n,1,1} -c_{n,2,1} + c_{n,12,1} + c_{n,21,1}
\\
 &= (-1)^n - (-1)^{n-1} - (-1)^n(n{+}1) - (-1)^n + (-1)^nn + (-1)^n 
\\
 &= (-1)^n,
\end{align*}
using first \eqref{coeffidentities} and then the tables \eqref{trailingcoefftable} and \eqref{degonetable} to complete the evaluation.
\end{proof}
Thus, we may write the Jones polynomial in the form 
\begin{equation*}
V_{W(3,n)}(t) = (-1)^n\, t^{-n} + v_{-n+1}t^{-n+1} + v_{-n+2}t^{-n+2} + \cdots + v_{n-2}t^{n-2}  + v_{n-1}t^{n-1} + (-1)^n\,t^n, 
\end{equation*}
where it is known that $v_{-n+i} = v_{n-i}$, since $W(p,q)$ for $p$ odd is amphicheiral.
The twist number of $W(3,n)$ is $\abs{v_{-n+1}}{+}\abs{v_{n-1}}$ according to \cite{DasbachLin}.
 We will now recompute the twist number for $n \geq 3$ from the information we have gathered
 about the coefficients of the Jones polynomial. 
First, observe that
\begin{equation}\label{vfirst}
v_{-n+1}t^{-n+1} = t^{-n-1}\cdot p_2 \, t^2,
\end{equation}
and $p_2\,t^2$ is computed from
\begin{equation*}
  (1+2t)\cdot(c_{n,0,1}\,t + c_{n,0,2}\,t^2) + t^2\cdot c_{n,1,0} = c_{n,0,1}\, t + (c_{n,0,2}{+}2\,c_{n,0,1}{+}c_{n,1,0})\cdot t^2,
\end{equation*}
and no other terms from the expansion \eqref{Vformal1}, because $c_{n,2,0} = 0$ by proposition \ref{trailing}.
We have 
\begin{align*}
  p_2 &= c_{n,0,2}+2\,c_{n,0,1}+c_{n,1,0} = c_{n,0,2} + 2 \cdot (-1)^n + (-1)^{n-1}
\intertext{by propositions \ref{trailing} and \ref{degreeone},}
      &= c_{n,0,2} + (-1)^n.
\end{align*}
We identify $c_{n,0,2}$ by reducing the recursive description \eqref{cn0} mod $q^3$, obtaining
\begin{align*}
  c_{n,0,0} + c_{n,0,1} \, q + c_{n,0,2}\, q^2 &\equiv q^2\cdot (c_{n-1, 21,0}) + (- q^2 + q)\cdot(c_{n-1,1,0} + c_{n-1,1,1}\, q)
\\
&\equiv  c_{n-1,1,0}\,q + (c_{n-1,21,0} +c_{n-1,1,1}{-}c_{n-1,1,0})\cdot q^2 \mod q^3
\end{align*}
After extracting the coefficient of $q^2$, 
\begin{align}
  c_{n,0,2} &= c_{n-1,21,0} +c_{n-1,1,1}{-}c_{n-1,1,0} \notag
\\
  &= 0 + (-1)^{n-1}((n{-}1)+1) - (-1)^{n-2} = (-1)^{n-1}(n+1), \quad \text{for $n \geq 3$,} \label{cn0deg2}
\end{align}
by propositions \ref{trailing} and \ref{degreeone}. Since we have used the formula for $c_{n-1,1,1}$ in \eqref{degonetable},
we must assume $n{-}1 \geq 2$. 
Referring to \eqref{vfirst},
\begin{equation*}
v_{-n+1} =  p_2= c_{n,0,2}+(-1)^n = (-1)^{n-1}n + (-1)^{n-1} + (-1)^n = (-1)^{n-1}n.
\end{equation*}
Thus, we have reproved the following formula given in theorem 5.1 of \cite{DasbachLin}.
\begin{proposition} \label{twistnumber}
For $n \geq 3$, the twist number of $W(3,n)$ is $\abs{v_{-n+1}}{+}\abs{v_{n-1}} {=} n {+} n {=} 2n$. \qed
\end{proposition}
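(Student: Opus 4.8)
The plan is to assemble the computations that immediately precede the statement. The genuine work has already been carried out in deriving $v_{-n+1} = (-1)^{n-1}n$ for $n \geq 3$: that identity rests on the value $c_{n,0,2} = (-1)^{n-1}(n+1)$, which is extracted by reducing the recursion \eqref{cn0} modulo $q^3$ and then feeding in the trailing and degree-one coefficient data from propositions \ref{trailing} and \ref{degreeone}. Taking absolute values gives $\abs{v_{-n+1}} = n$ at once.

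The second ingredient I would invoke is the amphicheiral symmetry $v_{-n+i} = v_{n-i}$ of the Jones polynomial of $W(3,n)$, which holds because $W(p,q)$ is amphicheiral for $p$ odd. Specializing to $i = 1$ yields $v_{n-1} = v_{-n+1} = (-1)^{n-1}n$, so that $\abs{v_{n-1}} = n$ as well. Alternatively, one could recompute $v_{n-1}$ directly from the top of the expansion \eqref{Vformal1} using the palindromic identities \eqref{coeffidentities} and the coefficient tables, but appealing to amphicheirality is far quicker and avoids repeating the leading-order bookkeeping already done for the span in proposition \ref{span}.

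Summing the two absolute values gives the twist number $\abs{v_{-n+1}} + \abs{v_{n-1}} = n + n = 2n$, which is the assertion. There is essentially no obstacle remaining at this stage, since the hard extraction of $c_{n,0,2}$ is what supplies $v_{-n+1}$; the one point that must be respected is the hypothesis $n \geq 3$. It enters because the closed form $c_{n,0,2} = (-1)^{n-1}(n+1)$ was obtained using the formula for $c_{n-1,1,1}$ from \eqref{degonetable}, which is established only for $n-1 \geq 2$. Hence the clean value $\abs{v_{-n+1}} = n$, and with it the statement of the proposition, is guaranteed precisely once $n \geq 3$; the small cases $W(3,1)$ and $W(3,2)$ are the unknot and the figure-eight knot and are handled separately by the explicit examples above.
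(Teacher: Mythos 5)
Your proposal is correct and follows the paper's own route exactly: the paper likewise obtains $v_{-n+1} = (-1)^{n-1}n$ by computing $p_2 = c_{n,0,2} + (-1)^n$ and extracting $c_{n,0,2} = (-1)^{n-1}(n+1)$ from the mod-$q^3$ reduction of \eqref{cn0} (with the $n\geq 3$ restriction entering through $c_{n-1,1,1}$ just as you say), and then invokes the amphicheirality symmetry $v_{-n+i}=v_{n-i}$ to conclude $\abs{v_{-n+1}}+\abs{v_{n-1}}=2n$. No substantive difference from the paper's argument.
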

\section{Higher Twist Numbers and Volume} \label{TwistNumbersVolume}
In \cite{DasbachLin}, Dasbach and Lin define higher twist numbers of a knot in terms of the Jones polynomial, 
with the idea that these invariants also correlate with the hyperbolic volume of the knot complement.
If 
\begin{equation*}
  V_K(t) = \lambda_{-m}t^{-m} + \lambda_{-m+1}t^{-m+1} + \cdots + \lambda_{n-1}t^{n-1} + \lambda_nt^n,
\end{equation*}
then the $j$th twist number of $K$ is $T_j(K) = \abs{\lambda_{-m+j}} + \abs{\lambda_{n-j}}$. 
Note that twist numbers $T_j(K)$ are only defined for $j$ within the span of the Jones polynomial. In the
case of weaving knots $W(3,n)$, the relevant twist numbers are defined for $1 \leq j \leq n{-}1$.
In proposition \ref{twistnumber} we have recomputed the first twist number of $W(3,n)$ using our results from section \ref{Hecke}.
In theorems \ref{twistnumber2} and \ref{twistnumber3} we extend the technique to compute the second and third twist numbers.

In the appendix to \cite{DasbachLin} one finds a scatter plot generated from a table of alternating knots of 14
crossings by plotting along a horizontal axis the higher twist numbers of the knots and along the vertical axis
the volume of the complement.  The authors also construct similar plots starting from a table of non-alternating
knots of 14 crossings.  In both cases, there appears to be some correlation between these combinatorial invariants and the 
geometric invariant.  We are going to explore how well higher twist numbers and volume correlate as the number of crossings
increases.

We supplement our rigorous calculations of $T_2\bigl(W(3,n)\bigr)$ and $T_3\bigl(W(3,n)\bigr)$ 
with some conjectural calculations in the following table of higher twist numbers. 
To obtain these results, we use {\em Mathematica} or {\em Maple}
to extract the coefficients $\lambda_{-n+k}$ of $t^{-n+k}$ in $V_{W(3,n)}(t)$ for $k = 4$, $5$, $6$, and $7$
associated to weaving knots $W(3,n)$ starting near $n = 2k$.  
We conjecture that the $k$th twist number $T_k\bigl(W(3,n)\bigr)$ is a polynomial in $n$ of degree $k$.
Taking iterated differences of the coefficient sequences, we find they are
consistent with the conjecture as long as $n$ is sufficiently large.   The following formulas for the twist numbers 
$T_k\bigl(W(3,n)\bigr) = 2 \abs{\lambda_{-n+k}}$ for $k = 4$, $5$, $6$, and $7$ were produced by fitting polynomials 
to sufficiently large selections of coefficients $\lambda_{-n+k}$ and comparing polynomial
values with computed coefficients for different values of $n$.
 \begin{table}[h!]
   \caption{Higher twist numbers for $W(3,n)$}
    \label{highertwists}
    \centering  \renewcommand{\arraystretch}{1.25}
  \begin{tabular}[h]{|l|l|}\hline
  $k$  & $T_k\bigl(W(3,n)\bigr)$
\\  \hline
    2 & $-n+n^2$ 
\\  \hline
    $3$ & $ n ( n{-}1 ) ( n{-}2 )/3 +2\,n$  
\\  \hline
    $4$ & $-(9/2)\,n+(35/12)\,{n}^{2} -(1/2)\,{n^{3}}+(1/12)\,{n^{4}}$  
\\ \hline
    $5$ & $ (42/5) \,n - (35/6)\,{n}^{2} + (19/12)\,{n}^{3} - (1/6)\,n^{4} + (1/60)\,{n}^{5}$ 
\\  \hline
    $6$ & $- (52/3) \,n + (2237/180) \,n^2 -  (29/8) \,n^3  +  (41/72) \,n^4 - (1/24)\,n^{5} +  (1/360) \, n^{6} $  
\\ \hline
    $7$ & $(254/7) \,n  -  (413/15) \,n^2 +  (1541/180)\,n^3 $ 
\\
        & $\qquad \qquad \qquad  -  (35/24)\,n^4 +  (11/72) \,n^5 - (1/120)\, n^6 + (1/2520)\, n^7$ \\ \hline
  \end{tabular}
\end{table}
Figures \ref{fig:secondtwistvsvolume}, \ref{fig:thirdtwistvsvolume}, and \ref{fig:fourthtwistvsvolume} 
plot horizontally values of the twist numbers $T_2$, $T_3$, and the conjectured $T_4$ and vertically
values of the volume of the link complement.  We used the program SnapPy \cite{SnapPy} to compute estimates of
the volume. 

In addition, 
one can ask how efficient are the bounds given in \eqref{CKPbounds} for the volume  of weaving knots $W(p,q)$.
For weaving knots $W(3,n)$ the bounds simplify to 
\begin{equation*}
  v_{{\rm oct}}\,n\,\biggl(1 - \frac{(2\pi)^2}{n^2}\biggr)^{3/2} \leq {\rm vol}(W(3,n))  <  4\,v_{{\rm tet}}\cdot n.
\end{equation*}
If we consider the volume relative to the crossing number ${\rm vol}(W(3,n))/2n$, then we have the chain
\begin{equation}  \label{CKPboundspecialrelative}
  \frac{v_{{\rm oct}}}{2}\biggl(1 - \frac{(2\pi)^2}{n^2}\biggr)^{3/2} \leq \frac{{\rm vol}(W(3,n))}{2n}  <  2\,v_{{\rm tet}}
\end{equation}
For a fixed value of $n$ there is a gap between the upper and lower bounds. 
We can ask whether or not better bounds on the relative volume of weaving knots $W(3,n)$ can be teased out of the
higher twist numbers of these knots.  
To obtain some information on the question we appeal to algorithms in the program SnapPy to generate estimates of the
volume of these knots. 

We perform the following manipulations on the formula for $T_k\bigl(W(3,n)\bigr)$.  First take the $k$th root 
of the expression and then divide by the crossing number $2n$ to obtain an expression whose limit as
$n$ tends to infinity is finite.  Then multiply by a normalization constant $C_k$ so that 
\begin{equation*}
  \lim_{n \ra \infty}C_k \frac{\sqrt[k]{T_k\bigl(W(3,n)\bigr)}}{2n} = 2 \, v_{{\rm tet}}.
\end{equation*}
In figure \ref{fig:comparingbounds}
we show the upper bound from equation \eqref{CKPboundspecialrelative} as a horizontal line at the top of the
plot and the lower bound as the lowest curve.   Values ${\rm vol}\bigl(W(3,n)\bigr)/2n$ according to SnapPy
are plotted as points.  
We also plot $C_k \cdot \sqrt[k]{T_k\bigl(W(3,n)\bigr)}/2n$ for $k=2$, $3$, and $4$.  We see that all three of these
curves provide better lower bounds on the relative volume than the lower bound given in \eqref{CKPboundspecialrelative}.
Indeed, for $n$ sufficiently large the lower bound from $T_2$ is better than the bound from $T_3$, which is, in turn,
better than the bound from $T_4$. 
\begin{figure}[h!]
  \centering
  \includegraphics[width=0.75\linewidth]{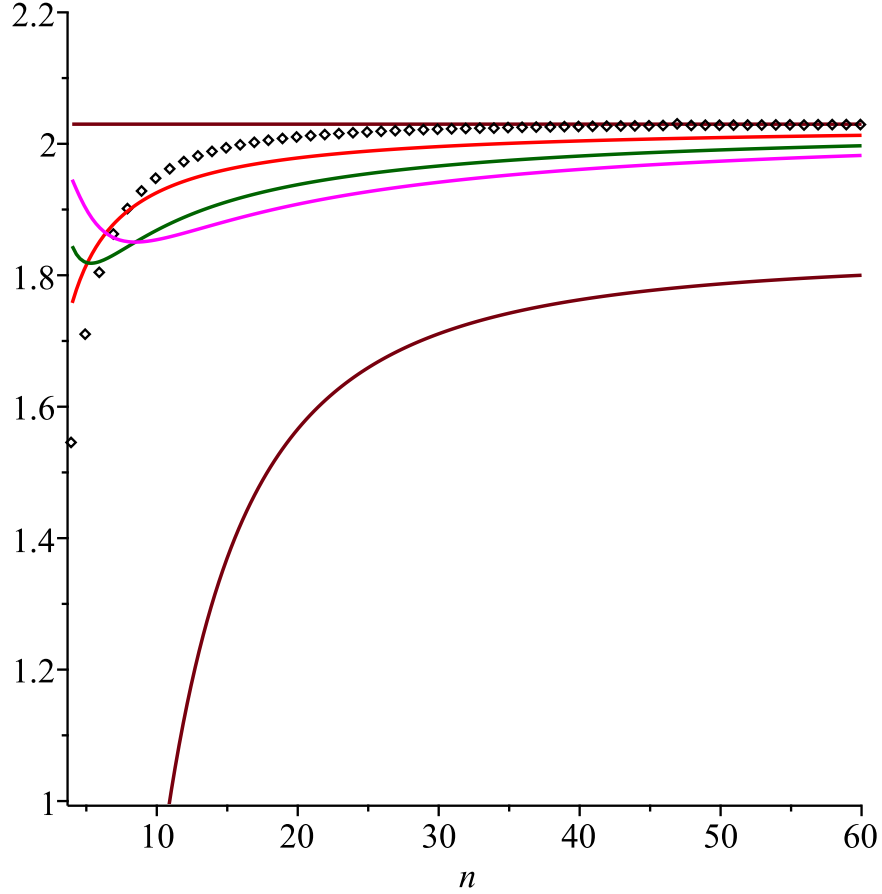}
  \caption{Comparing bounds}
  \label{fig:comparingbounds}
\end{figure}
\begin{theorem}
  \label{twistnumber2}
For $n \geq 5$, the second twist number of $W(3,n)$ is $n(n-1)$.
\end{theorem}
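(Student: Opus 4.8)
The plan is to mirror the computation of the first twist number in Proposition \ref{twistnumber}, now extracting one coefficient further into the Jones polynomial. Since $W(3,n)$ is amphicheiral, the coefficients satisfy $v_{-n+2} = v_{n-2}$, so the second twist number is $T_2\bigl(W(3,n)\bigr) = \abs{v_{-n+2}} + \abs{v_{n-2}} = 2\abs{v_{-n+2}}$, and it suffices to evaluate $v_{-n+2}$. Writing $V_{W(3,n)}(t) = t^{-n-1}P(t)$ as in \eqref{Vformal2}, the coefficient $v_{-n+2}$ equals $p_3$, the coefficient of $t^3$ in $P(t)$.

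First I would read off $p_3$ from the expansion \eqref{Vformal1}. The factor $t^4\cdot(C_{n,12}(t)+C_{n,21}(t))$ starts in degree $4$ and so makes no contribution, while the first two groups contribute
\begin{equation*}
  p_3 = \bigl(c_{n,0,3} + 2\,c_{n,0,2} + c_{n,0,1}\bigr) + \bigl(c_{n,1,1} + c_{n,2,1} + c_{n,1,0}\bigr).
\end{equation*}
Every term here except $c_{n,0,3}$ is already recorded: $c_{n,0,1}$, $c_{n,1,1}$, $c_{n,2,1}$ come from Proposition \ref{degreeone}, $c_{n,1,0}$ from Proposition \ref{trailing}, and $c_{n,0,2}$ from \eqref{cn0deg2}. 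Substituting these collapses the second group together with the two lower-order terms of the first group to $(-1)^{n-1}n$, leaving $p_3 = c_{n,0,3} + (-1)^{n-1}n$.

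The remaining task, and the crux of the argument, is to determine the degree-three coefficient $c_{n,0,3}$. Reducing the recursion \eqref{cn0} modulo $q^4$ gives $c_{n,0,3} = c_{n-1,21,1} - c_{n-1,1,1} + c_{n-1,1,2}$, which introduces the genuinely new quantity $c_{n,1,2}$, the degree-two coefficient of $C_{n,1}$. This coefficient is not in the existing tables, so I would develop it separately: reducing \eqref{cn1} modulo $q^3$ and feeding in the known trailing and degree-one data produces the first-order recurrence $c_{n,1,2} = (-1)^{n-1}n - c_{n-1,1,2}$, valid once $n$ is large enough that the closed forms for the lower coefficients apply.

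The main obstacle is solving this recurrence correctly. Its inhomogeneous term $(-1)^{n-1}n$ is resonant with the homogeneous solution $(-1)^n$, so the particular solution is \emph{quadratic} rather than linear in $n$; one finds $c_{n,1,2} = (-1)^{n-1}(n^2+n+2)/2$, where the additive constant must be pinned down from a directly computed base value (for instance $c_{3,1,2}=7$) rather than from the generic degree-two formula, since that formula only becomes valid after an index shift. With $c_{n,1,2}$ in hand, back-substitution gives $c_{n,0,3} = (-1)^n\,n(n+1)/2$, hence $p_3 = v_{-n+2} = (-1)^n\,n(n-1)/2$, and finally $T_2\bigl(W(3,n)\bigr) = 2\abs{v_{-n+2}} = n(n-1)$. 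The hypothesis $n \geq 5$ is exactly what guarantees that every closed-form coefficient invoked after the index shifts in \eqref{cn0} and \eqref{cn1} lies within its range of validity, so that no low-degree boundary coincidences corrupt the computation.
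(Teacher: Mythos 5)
Your proposal is correct and follows essentially the same route as the paper's proof: extract $p_3 = c_{n,0,3} + (-1)^{n-1}n$ from the truncated expansion, reduce \eqref{cn0} modulo $q^4$ to express $c_{n,0,3}$ through $c_{n-1,1,2}$, derive the recurrence $c_{n,1,2} + c_{n-1,1,2} = (-1)^{n-1}n$ from \eqref{cn1}, pin the closed form $c_{n,1,2} = (-1)^{n-1}\bigl(n(n+1)/2+1\bigr)$ with the base value $c_{3,1,2}=7$, and back-substitute to get $v_{-n+2} = (-1)^n n(n-1)/2$. The only cosmetic differences are that the paper obtains the recurrence by differentiating \eqref{cn1} twice at $q=0$ rather than reducing mod $q^3$, and solves it by a telescoping sum rather than by your homogeneous-plus-resonant-particular-solution argument; all of your intermediate formulas agree with the paper's.
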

\begin{proof}
  Comparing \eqref{Vformal1} with \eqref{Vformal2}, and noting that $C_{n,12}(t)$ and $C_{n,21}(t)$ start in degrees $0$ and $1$, respectively,
 we want to compute the term $p_3\,t^3$ from the truncated expansion
  \begin{multline*}
    p_1\, t + p_2\,t^2 + p_3\,t^3 
\\
= (1+t)^2(c_{n,0,0} + c_{n,0,1}t + c_{n,0,2}\,t^2 + c_{n,0,3}\,t^3) 
              + (t^2+t^3)\bigl((c_{n,1,0} + c_{n,1,1}\,t) + (c_{n,2,0} + c_{n,2,1}\,t)\bigr).
  \end{multline*}
Extracting the coefficient of $t^3$ and substituting from \eqref{trailingcoefftable} and \eqref{degonetable} as well as 
equation \eqref{cn0deg2} yields
\begin{multline}   \label{p3}
p_3 =   (c_{n,0,3} + 2 c_{n,0,2} + c_{n,0,1}) + (c_{n,1,0} + c_{n,1,1}) + (c_{n,2,0} + c_{n,2,1})
\\
  =( c_{n,0,3} + 2(-1)^{n-1}(n+1)+ (-1)^n) + \bigl( (-1)^{n-1} + (-1)^n(n+1)  \bigr) + \bigl(0+ (-1)^{n} \bigr)
\\
  = c_{n,0,3} + (-1)^{n-1}n. \hspace{12em}
\end{multline}
We reduce  the recursive formula \eqref{cn0} modulo $t^4$ to compute $c_{n,0,3}$.
\begin{align*}
  C_{n,0}(t) &= t^2\cdot C_{n-1,21}(t) - t(t-1)\cdot C_{n-1,1}(t)
\\ 
   & \equiv t^2(c_{n-1,21,0} + c_{n-1,21,1}\, t) - t^2(c_{n-1,1,0} + c_{n-1,1,1}\,t) + t(c_{n-1,1,0} + c_{n-1,1,1}\,t + c_{n-1,1,2}\,t^2)
\\
   &\quad \mod t^4,
\end{align*}
 so after extracting the coefficient of $t^3$, we have
\begin{multline}  \label{cn0deg3}
  c_{n,0,3} = c_{n-1,21,1} - c_{n-1,1,1} + c_{n-1,1,2}
\\
    = (-1)^{n-1}-(-1)^{n-1}n + c_{n-1,1,2} = (-1)^{n}(n{-}1) + c_{n-1,1,2}.
\end{multline}
Thus, we need a formula for $c_{n-1,1,2}$. For this return to the recursive formula \eqref{cn1} and differentiate twice, obtaining
\begin{equation*}
C_{n,1}^{(2)}(q) = -2\cdot C_{n-1,1}(q) - 4(q-1)\cdot C_{n-1,1}'(q) - (q-1)^2\cdot C_{n-1,1}^{(2)}(q) - 2 C_{n-1,0}'(q) -(q-1)\cdot C_{n-1,0}^{(2)}(q).
\end{equation*}
Substituting $q=0$, we get
\begin{align*}
  2 c_{n,1,2} &= -2 c_{n-1,1,0} + 4 c_{n-1,1,1} - 2 c_{n-1,1,2} - 2 c_{n-1,0,1} +2 c_{n-1,0,2},
\\
 &=-2\cdot (-1)^{n-2} + 4 \cdot (-1)^{n-1}n - 2 c_{n-1,1,2} - 2(-1)^{n-1} + 2 (-1)^{n-2}\,n,
\end{align*}
applying propositions \ref{trailing} and \ref{degreeone} and formula \eqref{cn0deg2}.  Note that the use of 
\eqref{cn0deg2} requires $n{-}1 \geq 3$, so we have to have $n\geq 4$. 
Rewriting this expression, we get
\begin{equation}
  c_{n,1,2} + c_{n-1,1,2} = (-1)^{n-1}n, \quad \text{for $n \geq 4$.}  \label{cn1deg2recursion}
\end{equation}
Now we create a closed form expression for $c_{n,1,2}$ by forming a telescoping sum.
\begin{align*}
  c_{n,1,2} + c_{n-1,1,2} &= (-1)^{n-1}n
\\
 - c_{n-1,1,2} - c_{n-2,1,2} &= (-1)^{n-1}(n{-}1)
\\   &\cdots
\\
 (-1)^k c_{n-k,1,2} + (-1)^k c_{n-k-1,1,2} &= (-1)^k(-1)^{n-k-1}(n{-}k)
\\
     &\cdots
\\
(-1)^{n-5} c_{5,1,2} + (-1)^{n-5}c_{4,1,2} &= (-1)^{n-1}5
\\
(-1)^{n-4} c_{4,1,2} + (-1)^{n-4}c_{3,1,2} &= (-1)^{n-1}4.
\end{align*}
Adding these equations yields
\begin{multline*}
  c_{n,1,2} + (-1)^{n-4}c_{3,1,2} = (-1)^{n-1}\sum_{k=4}^n k 
\\
= (-1)^{n-1}\biggl( \frac{n(n+1)}{2} - \frac{3(3+1)}{2} \biggr)
  = (-1)^{n-1}\bigl( n(n+1)/2 - 6\bigr) .
\end{multline*}
From section \ref{PolynomialsExtra},  
$C_{3,1}(q) = 1 - 4\,q + 7\,q^2 - 7\,q^3 + 4 \, q^4 - q^5$, so $c_{3,1,2} = 7$ and  we obtain
\begin{equation}
  \label{cn1deg2}
  c_{n,1,2} = (-1)^{n-1}\Bigl( \frac{n(n+1)}{2}+1\Bigr), \quad \text{for $n \geq 4$.}
\end{equation}
Substituting the results of \eqref{cn0deg3} and \eqref{cn1deg2} into \eqref{p3}, we obtain
\begin{align}
  \label{p3final}
v_{-n+2} =  p_3  &= (-1)^{n-1}n + c_{n,0,3}  \notag
\\
 &= (-1)^{n-1}n  + \bigl( (-1)^{n}(n{-}1) + c_{n-1,1,2}\bigr)  \notag
\\
 &=  \bigl( (-1)^{n-1}n  + (-1)^{n}(n{-}1)\bigr) + (-1)^{n-2}(n-1)n/2 +(-1)^{n-2}  \notag
\\
 &= (-1)^n(n-1)n/2.
\end{align}
for $n{-}1 \geq 4$, or $n\geq 5$. 
We conclude the second twist number for $W(3,n)$ is $\abs{v_{-n+2}} + \abs{v_{n-2}}= n(n-1)$.
\end{proof}
\begin{theorem} \label{twistnumber3}
  For $n \geq 5$ the coefficient of $t^{-n+3}$ in the Jones polynomial $V_{W(3,n)}(t)$ is
\begin{equation*}
v_{-n+3} = (-1)^{n-1}\bigl(n(n-1)(n-2)/6 + n\bigr),
\end{equation*}
so third twist number for $W(3,n)$ is $\abs{v_{-n+3}} + \abs{v_{n-3}}= n(n-1)(n-2)/3 + 2n$.
\end{theorem}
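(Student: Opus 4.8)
The plan is to follow the proof of theorem \ref{twistnumber2} almost verbatim, but to extract one higher coefficient of the Jones polynomial. Writing $V_{W(3,n)}(t) = t^{-n-1}P(t)$ as in \eqref{Vformal2}, the coefficient $v_{-n+3}$ is precisely $p_4$, the coefficient of $t^4$ in $P(t)$. Reading $p_4$ off the expansion \eqref{Vformal1}, and recalling that $C_{n,12}(t)$ and $C_{n,21}(t)$ are multiplied by $t^4$ and begin in degrees $0$ and $1$ respectively, I obtain
\[
  p_4 = \bigl(c_{n,0,4} + 2c_{n,0,3} + c_{n,0,2}\bigr) + \bigl(c_{n,1,2} + c_{n,1,1}\bigr) + \bigl(c_{n,2,2} + c_{n,2,1}\bigr) + c_{n,12,0}.
\]
Every coefficient here except $c_{n,0,4}$ is already in hand: $c_{n,0,2}$, $c_{n,0,3}$ and $c_{n,1,2}$ come from \eqref{cn0deg2}, \eqref{cn0deg3} and \eqref{cn1deg2}; the coefficients $c_{n,1,1}$, $c_{n,2,1}$, $c_{n,12,0}$ come from propositions \ref{degreeone} and \ref{trailing}; and $c_{n,2,2} = c_{n-1,1,1}$ by \eqref{cn2}, again known from proposition \ref{degreeone}. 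So the whole computation reduces to determining the single new coefficient $c_{n,0,4}$.

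To find $c_{n,0,4}$ I reduce the recursion \eqref{cn0} modulo $q^5$ and extract the coefficient of $q^4$, which gives
\[
  c_{n,0,4} = c_{n-1,21,2} - c_{n-1,1,2} + c_{n-1,1,3}.
\]
This recruits two coefficients not yet computed, namely the degree-three coefficient $c_{n,1,3}$ of $C_{n,1}(q)$ and the degree-two coefficient $c_{n,21,2}$ of $C_{n,21}(q)$. For the first, I reduce \eqref{cn1} modulo $q^4$ (using $C_{n-1,121} \equiv 0$ from proposition \ref{C121}); the coefficient of $q^3$ produces a first-order recurrence
\[
  c_{n,1,3} + c_{n-1,1,3} = -c_{n-1,1,1} + 2c_{n-1,1,2} - c_{n-1,0,2} + c_{n-1,0,3},
\]
whose right side is a known polynomial in $n$ of alternating sign. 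For the second, I reduce \eqref{cn21} modulo $q^3$; its coefficient of $q^2$ gives
\[
  c_{n,21,2} + c_{n-1,21,2} = c_{n-1,2,2} - c_{n-1,2,1} + c_{n-1,12,1} + 2c_{n-1,21,1},
\]
whose right side is likewise known via \eqref{cn2}, \eqref{degonetable}, and proposition \ref{trailing}.

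Each of these two recurrences is solved exactly as \eqref{cn1deg2} was obtained in theorem \ref{twistnumber2}: multiply through by $(-1)^n$ to turn the alternating recurrence into a genuine telescoping difference, sum from a small base case, and pin down the constant using an explicit low-degree polynomial from section \ref{PolynomialsExtra}. Substituting the resulting closed forms for $c_{n,1,3}$ and $c_{n,21,2}$ back into $c_{n,0,4}$, then into $p_4$, and simplifying the $(-1)^n$ signs together with the cubic-in-$n$ terms, yields $v_{-n+3} = (-1)^{n-1}\bigl(n(n-1)(n-2)/6 + n\bigr)$. The third twist number is $\abs{v_{-n+3}} + \abs{v_{n-3}}$; by the amphichirality relation $v_{-n+i} = v_{n-i}$ noted after proposition \ref{span}, the two summands coincide, giving $n(n-1)(n-2)/3 + 2n$.

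The main obstacle is not any individual computation but the cascading dependency among the coefficients: obtaining the $k$th twist number requires the degree-$(k{+}1)$ coefficient of $C_{n,0}$, which recruits still higher coefficients of $C_{n,1}$ and $C_{n,21}$, each demanding its own modular reduction and telescoping solution. The genuinely delicate point is controlling the range of validity. Every closed-form coefficient formula invoked along the way---for instance \eqref{cn0deg2} and \eqref{cn1deg2}---carries its own lower bound on $n$, and these bounds degrade at each nested use; tracking them carefully through the recursion is exactly what forces the hypothesis $n \geq 5$.
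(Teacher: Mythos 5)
Your proposal is correct and follows essentially the same route as the paper's own proof: the same extraction of $p_4$ from the expansion \eqref{Vformal1}, the same reductions of \eqref{cn0} mod $q^5$, \eqref{cn1} mod $q^4$, and \eqref{cn21} mod $q^3$ to produce the recurrences for $c_{n,0,4}$, $c_{n,1,3}$, and $c_{n,21,2}$, the same telescoping-sum resolution anchored by the explicit polynomials $C_{3,21}(q)$ and $C_{4,1}(q)$ from section \ref{PolynomialsExtra}, and the same bookkeeping of validity ranges that forces $n \geq 5$. The only cosmetic difference is order of substitution (the paper eliminates $c_{n,1,2}$ via the recursion \eqref{cn1deg2recursion} before introducing $c_{n,0,4}$, while you carry closed forms throughout), which does not change the argument.
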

\begin{proof}
  The essential point is to compute the coefficient $p_4$ in the expansion \eqref{Vformal2}.  Starting from the 
truncated polynomial expression
\begin{multline*}
  (1+2t+t^2)(c_{n,0,1}t+c_{n,0,2}t^2+ c_{n,0,3}t^3 + c_{n,0,4}t^4) 
\\
 + (t^2 + t^3)\bigl[(c_{n,1,0} + c_{n,1,1}t + c_{n,1,2}t^2) + (c_{n,2,0} + c_{n,2,1}t + c_{n,2,2}t^2)\bigr]
\\
 + t^4(c_{n,12,0}+c_{n,21,0}),
\end{multline*}
we extract the coefficient of $t^4$, obtaining 
\begin{align}
  p_4 &= (c_{n,0,4}+2\,c_{n,0,3} + c_{n,0,2}) + \bigl[(c_{n,1,1}+c_{n,1,2}) + (c_{n,2,1}+c_{n,2,2})\bigr] + (c_{n,12,0}+c_{n,21,0})\notag
\\
     &= (c_{n,0,4}+2\,c_{n,0,3} + c_{n,0,2}) + \bigl[((-1)^n(n+1)+c_{n,1,2}) + ((-1)^n+c_{n,2,2})\bigr] + (-1)^{n-1} \notag
\intertext{by \eqref{trailingcoefftable} and \eqref{degonetable},}
     &= (c_{n,0,4}+ 2\,c_{n,0,3} +(-1)^{n-1}(n{+}1)) + \bigl[((-1)^n(n+1)+c_{n,1,2}) + c_{n,2,2})\bigr]  \notag
\intertext{evaluating $c_{n,0,2} = (-1)^{n-1}(n{+}1)$ for $n \geq 3$ by \eqref{cn0deg2}, }
     &= c_{n,0,4} + 2\, c_{n,0,3} + c_{n,1,2} + c_{n,2,2}  \notag
\\
     &= c_{n,0,4} +2 \bigl( (-1)^{n}(n-1) + c_{n-1,1,2} \bigr) + c_{n,1,2} + c_{n-1,1,1} \notag
\intertext{substituting for $c_{n,0,3}$ from \eqref{cn0deg3} and using \eqref{cn2} which implies $c_{n,2,2}=c_{n-1,1,1}$,   } 
     &= c_{n,0,4} + 2(-1)^{n}(n-1) + c_{n-1,1,2} + (-1)^{n-1}n + (-1)^{n-1}n  \notag
\intertext{since $ c_{n,1,2} + c_{n-1,1,2} = (-1)^{n-1}n$ by \eqref{cn1deg2recursion} and $c_{n-1,1,1} = (-1)^{n-1}n$ by \eqref{degonetable}, }
     &= c_{n,0,4} +c_{n-1,1,2} + 2(-1)^{n-1}  \label{p4v1}
\end{align}
Compute $c_{n,0,4}$ from the recursion formula \eqref{cn0} reduced modulo $q^5$, which yields
\begin{multline*}
  c_{n,0,0} + c_{n,0,1}\,q + c_{n,0,2}\,q^2 + c_{n,0,3}\,q^3 + c_{n,0,4}\,q^4 
\\
  = q^2\bigl( c_{n-1,21,0} + c_{n-1,21,1}\, q + c_{n-1,21,2}\, q^2 \bigr) 
    - q^2\bigl( c_{n-1,1,0} + c_{n-1,1,1}\, q + c_{n-1,1,2}\,q^2 \bigr)
\\
    + q\bigl( c_{n-1,1,0} + c_{n-1,1,1}\, q + c_{n-1,1,2}\,q^2  + c_{n-1,1,3}\,q^3\bigr)
\end{multline*}
Extracting the coefficients of $q^4$ gives
\begin{equation}
  \label{cn04step1}
  c_{n,0,4} = c_{n-1,21,2} - c_{n-1,1,2} + c_{n-1,1,3},
\end{equation}
so we have 
\begin{align}
   p_4  &=   c_{n,0,4} +c_{n-1,1,2} + 2(-1)^{n-1} = (c_{n-1,21,2} - c_{n-1,1,2} + c_{n-1,1,3})+c_{n-1,1,2} + 2(-1)^{n-1} \notag
\\
        &=   c_{n-1,21,2} +c_{n-1,1,3} + 2(-1)^{n-1} \label{p4v2}
\end{align}
We now deal with $c_{n-1,21,2}$
by reducing the recurrence relation \eqref{cn21} mod $q^3$.
We get
\begin{multline*}
  c_{n,21,0} + c_{n,21,1}\,q + c_{n,21,2}\,q^2 
\\
    \equiv (-q{+}1)\bigl(c_{n-1,2,0} + c_{n-1,2,1}\,q + c_{n-1,2,2}\,q^2\bigr) + q \bigl(c_{n-1,12,0}+ c_{n-1,12,1}\, q \bigr)
\\
    -(q{-}1)^2\bigl( c_{n-1,21,0} + c_{n-1,21,1} \, q + c_{n-1,21,2}\, q^2 \bigr) \mod q^3.
\end{multline*}
Extracting the coefficient of $q^2$ gives
\begin{equation*}
  c_{n,21,2} = c_{n-1,2,2} - c_{n-1,2,1} + c_{n-1,12,1} 
   + (-c_{n-1,21,2} + 2\,c_{n-1,21,1} - c_{n-1,21,0})
\end{equation*}
or, since $C_{n-1,2}(q) = q\cdot C_{n-2,1}(q)$ by \eqref{cn2}, we have $c_{n-1,2,2}= c_{n-1,1,1}$, so 
\begin{align}
  c_{n,21,2} &= c_{n-2,1,1}- c_{n-1,2,1} + c_{n-1,12,1} + (-c_{n-1,21,2} + 2\,c_{n-1,21,1} - c_{n-1,21,0}) \notag
\\
           &= (-1)^{n-2}(n-1) - (-1)^{n-1} + (-1)^{n-1}(n-1) - c_{n-1,21,2} + 2(-1)^{n-1} - 0, \notag
\intertext{substituting from \eqref{degonetable} and \eqref{trailingcoefftable},}
           &=  - c_{n-1,21,2} + (-1)^{n-1}.
\end{align}
We compute an alternating sum of another sequence of equalities
\begin{align*}
  c_{n,21,2} + c_{n-1,21,2} &= (-1)^{n-1}
\\
(-1)\bigl(  c_{n,21,2} + c_{n-1,21,2}\bigr) &= (-1)(-1)^{n-2}
\\
  &\cdots
\\
(-1)^{k-1}\bigl(c_{n-k+1,21,2} + c_{n-k,21,2}\bigr) &= (-1)^{k-1}(-1)^{n-k}
\\
(-1)^k\bigl(c_{n-k,21,2} + c_{n-k-1,21,2}\bigr) &= (-1)^k(-1)^{n-k-1}
\end{align*}
Adding the equations we get
\begin{equation*}
  c_{n,21,2} + (-1)^kc_{n-k-1,21,2} = (-1)^{n-1}(k{+}1),
\; \text{or} \;
  c_{n,21,2} + (-1)^{n-j-1}c_{j,21,2} = (-1)^{n-1}(n{-}j),
\end{equation*}
if we write $k=n-j-1$, so that $j=n-k-1$. 
Referring to section \ref{PolynomialsExtra}, the first $j$ for which $c_{j,21,2} \neq 0$ is $j=3$, 
and $C_{3,21}(q) = -q+ 2\,q^2 - q^3$, so $c_{3,21,2} = 2$.  Substituting and rearranging,
\begin{equation}
  \label{cn21deg2}
  c_{n,21,2} = (-1)^{n-1}(n{-}3) - (-1)^nc_{3,21,2} = (-1)^{n-1}\bigl((n-3)+2 \bigr) = (-1)^{n-1}(n-1),
\end{equation}
and this holds for $n \geq 3$.  Substituting into \eqref{p4v2}, we get
\begin{equation}
  \label{p4v3}
  p_4 = c_{n-1,1,3} + (-1)^{n-2}(n-2) + 2(-1)^{n-1} = c_{n-1,1,3} + (-1)^nn + 4(-1)^{n-1}.
\end{equation}
The most straightforward approach to computing $c_{n-1,1,3}$ is through the recursion relation \eqref{cn1}.
Reducing the relation mod $q^4$ gives
\begin{multline*}
  c_{n,1,0} + c_{n,1,1}\, q + c_{n,1,2}\,q^2 + c_{n,1,3}\, q^3
\\
  (-1+ 2q - q^2)(c_{n-1,1,0} + c_{n-1,1,1}\,q + c_{n-1,1,2}q^2 + c_{n-1,1,3}\,q^3)
\\ 
 +   (1-q) (c_{n-1,0,0} + c_{n-1,0,1}\,q + c_{n-1,0,2}\,q^2 + c_{n-1,0,3}\,q^3)  \mod q^4,
\end{multline*}
and extracting the coefficient of $q^3$ gives
\begin{align*}
  c_{n,1,3} &= -c_{n-1,1,3} + 2\, c_{n-1,1,2} - c_{n-1,1,1} + c_{n-1,0,3} - c_{n-1,0,2}
\\&=-c_{n-1,1,3} + 2\, c_{n-1,1,2} + c_{n-1,0,3},
\end{align*}
since $c_{n-1,1,1} + c_{n-1,0,2} = (-1)^{n-1}n + (-1)^{n-2}n = 0$ for $n \geq 4$ by
\eqref{degonetable} and \eqref{cn0deg2}. 
Making the substitution for $c_{n-1,0,3}$ from \eqref{cn0deg3}, 
\begin{align}  \begin{split}
  c_{n,1,3} + c_{n-1,1,3} &= 2\,(-1)^{n-2}\Bigl(\frac{n(n-1)}{2}+1\Bigr)  + c_{n-1,0,3}  
\\  
   &= (-1)^{n-2}\bigl( n(n-1)+2\bigr) + (-1)^{n-1}(n-2) + c_{n-2,1,2}                 
\\
   &= (-1)^{n-2}\bigl( n(n-1)+2\bigr) + (-1)^{n-1}(n{-}2) + (-1)^{n-3}\bigl((n{-}1)(n{-}2)/2 + 1\bigr)
\\
   &= (-1)^n\frac{n^2}{2} + (-1)^{n-1}\frac{n}{2} + (-1)^n2
\end{split}
\end{align}
Now generate a telescoping sum from the chain of equalities
\begin{align*}
  c_{n,1,3} + c_{n-1,1,3} &= (-1)^n\biggl[\frac{n^2}{2} - \frac{n}{2} +2\biggr]
\\
(-1)\bigl[ c_{n-1,1,3} + c_{n-2,1,3}\bigr] &= (-1)(-1)^{n-1}\biggl[\frac{(n{-}1)^2}{2} -\frac{n{-}1}{2} + 2\biggr]
\\
 &\cdots
\\
(-1)^k\bigl[c_{n-k,1,3} + c_{n-k-1,1,3}\bigr] &=(-1)^k (-1)^{n-k}\biggl[\frac{(n{-}k)^2}{2} - \frac{(n{-}k)}{2} + 2 \biggr]
\end{align*}
Adding these equalities gives
\begin{equation*}
  c_{n,1,3} + (-1)^kc_{n-k-1,1,3} = (-1)^n\biggl[\frac{1}{2}\sum_{j=n-k}^n j^2 - \frac{1}{2} \sum_{j=n-k}^n j + (k{+}1)2\biggr]
\end{equation*}
and, if we write $n-k-1 = \ell$, so that $n-k = \ell + 1$ and $k+1 = n - \ell$, 
\begin{align}  
\begin{split}
  c_{n,1,3} + (-1)^{n-\ell -1} c_{\ell,1,3}
    &= (-1)^n\biggl[\frac{1}{2}\sum_{j=\ell+1}^n j^2 - \frac{1}{2} \sum_{j=\ell+1}^n j + (n{-}\ell)2\biggr]
    \\
    &=  (-1)^n\biggl[\frac{1}{2}\frac{n(n{+}1)(2n{+}1)}{6} - \frac{1}{2}\frac{\ell(\ell{+}1)(2\ell{+}1)}{6}
\\  &\hspace{6em}
         - \frac{1}{2} \frac{n(n{+}1)}{2} + \frac{1}{2}\frac{\ell(\ell{+}1)}{2} +  (n{-}\ell)2 \biggr]
\end{split}
\end{align}
Taking $\ell=4$, so that, from section \ref{PolynomialsExtra},
$  C_{4,1}(q) = -1 + 5\,q - 11 \, q^2 + 16\, q^3 - \cdots$ and $c_{4,1,3} = 16$, 
we evaluate
\begin{align*}
\begin{split}
  c_{n,1,3} - (-1)^n(16) &=
  (-1)^n\biggl[\frac{1}{2} \frac{2n^3}{6} + \frac{1}{2}\frac{3n^2}{6} +  \frac{1}{2} \frac{n}{6} - \frac{1}{2}\frac{4\cdot 5 \cdot 9}{6}
\\
    &\hspace{8em}  - \frac{1}{2}\frac{n^2}{2} - \frac{1}{2}\frac{n}{2} + \frac{1}{2}\frac{4\cdot 5}{2} + 2(n-4)\biggr]
\\
&= (-1)^n \biggl[\frac{n^3}{6} - \frac{n}{6} - 10  + 2n -8 \biggr]
\\
&= (-1)^n \biggl[ \frac{n(n-1)(n+1)}{6} + 2n - 18 \biggr]
\end{split}
\end{align*}
Therefore, 
\begin{equation}
  \label{cn1deg3}
  c_{n,1,3} = (-1)^n\biggl[ \frac{n(n-1)(n+1)}{6} + 2n - 18 + 16 \biggr] = (-1)^n\biggl[ \frac{n(n-1)(n+1)}{6} + 2n - 2\biggr],
\end{equation}
a formula valid for $n \geq 4$, but failing for $n = 3$, so 
\begin{align}
   \lambda_{-n+ 3} = p_4 &= c_{n-1,1,3} + (-1)^nn + 4(-1)^{n-1} \notag
\\
&= (-1)^{n-1}\biggl[ \frac{n(n-1)(n-2)}{6} + 2(n-1) - 2\biggr] - (-1)^{n-1}n + 4(-1)^{n-1} \notag
\\
&= (-1)^{n-1}\bigl( n(n-1)(n-2)/6 + n \bigr),   \label{p4v4}
\end{align}
which is consequently valid for $n \geq 5$. This ends the proof.
\end{proof}
\begin{figure}[h!]
  \centering
  \includegraphics[width = 0.3\linewidth]{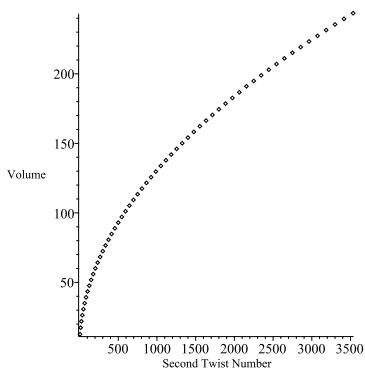}
  \caption{Second twist number versus volume}
  \label{fig:secondtwistvsvolume}
\end{figure}
\begin{figure}[h!]
  \centering
  \includegraphics[width = 0.3\linewidth]{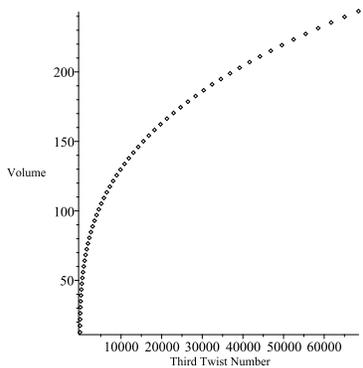}
  \caption{Third twist number versus volume}
  \label{fig:thirdtwistvsvolume}
\end{figure}
\begin{figure}[h!]
  \centering
  \includegraphics[width = 0.3\linewidth]{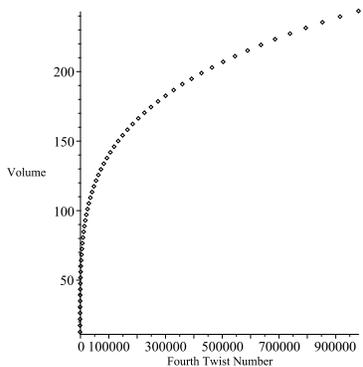}
  \caption{Fourth twist number versus volume}
  \label{fig:fourthtwistvsvolume}
\end{figure}
\section{From the Jones Polynomial to Khovanov homology}  \label{Jones-to-Khovanov}
In this section we amplify Theorem \ref{locateKH}, at least the first part of it. \addtocounter{section}{-3} \addtocounter{theorem}{4}
\begin{theorem}
For a weaving knot $W(2k{+}1,n)$ the non-vanishing Khovanov homology ${\mathcal H}^{i,j}\bigl( W(2k{+}1, n) \bigr)$ lies on the 
lines
\begin{equation*}
  j = 2i \pm 1.
\end{equation*}
For a weaving knot $W(2k, n)$ the non-vanishing Khovanov homology ${\mathcal H}^{i,j}\bigl( W(2k, n) \bigr)$ lies on the lines
\begin{equation*}
  j = 2i + n -1 \pm 1
\end{equation*}
 \end{theorem}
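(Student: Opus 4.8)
The plan is to reprove the theorem by routing the support statement through the categorification picture, since that same bridge is what we need in order to extract the two-variable Poincar\'{e} polynomial later in the section. First I would record the graded Euler characteristic identity
\begin{equation*}
  \sum_{i,j}(-1)^i\,q^j\,\dim_{\bQ}{\mathcal H}^{i,j}(L) = (q+q^{-1})\,V_L(q^2),
\end{equation*}
valid for every knot $L$, which exhibits the (unnormalized) Jones polynomial as the graded Euler characteristic of rational Khovanov homology. This is the only point at which the Jones polynomials computed in Section \ref{Polys} enter, and it fixes the grading conventions ($q^2 = t$ together with the overall factor $q+q^{-1}$) that the rest of the argument must respect.

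The second step supplies the structural input and already proves the location statement as written. Every $W(p,n)$ is alternating, so Theorem \ref{locateKHLee} confines ${\mathcal H}^{i,j}$ to the pair of diagonals $j = 2i - \sigma(L) \pm 1$. Substituting the signatures from Proposition \ref{weavingsignature}, namely $\sigma\bigl(W(2k+1,n)\bigr) = 0$ and $\sigma\bigl(W(2k,n)\bigr) = -n+1$, turns these into the claimed lines $j = 2i \pm 1$ and $j = 2i + n - 1 \pm 1$ respectively. At this point the theorem is complete; the work that constitutes the promised \emph{amplification} of the first part is the passage from the support to the actual ranks along the two diagonals.

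For that amplification I would abbreviate $s = -\sigma(L)$ and set $a_i = \dim_{\bQ}{\mathcal H}^{i,2i+s+1}$ and $b_i = \dim_{\bQ}{\mathcal H}^{i,2i+s-1}$, so that by the second step the homology is carried entirely by the $a_i$ and $b_i$. Reading off the coefficient of a fixed power $q^j$ on the left-hand side of the Euler characteristic identity, one finds that the two diagonals meet that power in homological degrees of opposite parity, so $q^j$ records only the signed difference $a_i - b_{i+1}$ rather than the two ranks separately. Hence the Jones polynomial alone does not determine the homology, and I would close the gap by invoking Lee's finer result that for an alternating knot the rational Khovanov homology splits into knight-move pairs relating ${\mathcal H}^{i,2i+s-1}$ to ${\mathcal H}^{i+1,2(i+1)+s+1}$, together with one exceptional surviving pair forced by the Lee spectral sequence. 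The knight-move relations yield a second family of equations $b_i = a_{i+1}$ away from the exceptional degrees, and solving the two families together expresses each $a_i$ and $b_i$ as an explicit alternating sum of Jones coefficients, which is exactly the Poincar\'{e} polynomial.

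The main obstacle is precisely this collapse in the Euler characteristic: thinness of the support is by itself insufficient to recover the ranks, because distinct homological degrees on a single diagonal enter with alternating signs. The real content is therefore the bookkeeping that combines the Euler-characteristic differences with the knight-move pairing and correctly locates the exceptional pair; in the odd case $\sigma = 0$ this pair sits symmetrically about $j = 2i$, so the amphichirality of $W(2k+1,n)$ supplies an extra symmetry of the coefficients that I would exploit to simplify the resulting linear system. Keeping the $(q+q^{-1})$ normalization and the $q^2 = t$ substitution consistent with the Jones polynomials of Section \ref{Polys} is the other place where care is required.
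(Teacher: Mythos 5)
Your proposal is correct and, for the statement itself, takes exactly the paper's route: the theorem follows by substituting the signature values $\sigma\bigl(W(2k{+}1,n)\bigr)=0$ and $\sigma\bigl(W(2k,n)\bigr)=-n{+}1$ from Proposition \ref{weavingsignature} into the support lines $j=2i-\sigma(L)\pm 1$ given by Theorem \ref{locateKHLee} for alternating knots, which is precisely the paper's one-line proof. The further material on recovering the individual ranks from the Jones polynomial via the graded Euler characteristic and the knight-move pairing is not needed for the statement and simply anticipates what the paper does separately in Section \ref{Jones-to-Khovanov}.
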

We have the following definition of the bi-graded Euler characteristic
associated to Khovanov homology. 
 \begin{equation*}
   Kh(L)(t,Q)  \stackrel{{\rm def}}{=} \sum t^iQ^j \dim {\mathcal H}^{i,j}(L)
 \end{equation*}
\addtocounter{section}{3} \addtocounter{theorem}{-5}
 \begin{theorem}[Theorem 1.1, \cite{Lee_Endo04}] \label{Lee1}
For an oriented link $L$, the graded Euler characteristic
\begin{equation*}
  \sum_{i,j \in \bZ} (-1)^iQ^j  \dim {\mathcal H}^{i,j}(L) 
\end{equation*}
   of the Khovanov invariant ${\mathcal H}(L)$ is equal to $(Q^{-1}{+}Q)$
   times the Jones polynomial $V_L(Q^2)$ of $L$.

In terms of the associated polynomial $Kh(L)$, 
\begin{equation} \label{JonesfromKhovanov}
  Kh(L)(-1, Q) = (Q^{-1}+Q)V_L(Q^2). \qed
\end{equation} 
 \end{theorem}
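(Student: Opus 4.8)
The plan is to prove the identity by the standard route for Khovanov homology: evaluate the graded Euler characteristic at the chain level, where it becomes a state sum over the cube of resolutions, and then invoke the fact that passage to homology preserves the Euler characteristic in each fixed quantum grading.

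First I would recall the Khovanov chain complex of a link diagram $D$ of $L$ with $c$ crossings, of which $n_+$ are positive and $n_-$ negative. To each vertex $v \in \{0,1\}^c$ of the cube of resolutions one associates the system of disjoint circles obtained by applying the $0$- or $1$-smoothing at each crossing according to the coordinates of $v$; if this system has $k_v$ circles, the associated graded vector space is $V^{\ten k_v}$, where $V = \langle v_+, v_- \rangle$ carries the gradings $\deg(v_+) = 1$ and $\deg(v_-) = -1$. With Khovanov's conventions the summand indexed by $v$ sits in homological degree $i = \abs{v} - n_-$ and has its internal grading shifted by $\abs{v} + n_+ - 2n_-$, where $\abs{v} = \sum_\ell v_\ell$.

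Next comes the key structural point: because the Khovanov differential preserves the quantum grading $j$, for each fixed $j$ the rank--nullity theorem applied to the finite-dimensional complex $C^{\bullet,j}(D)$ gives $\sum_i (-1)^i \dim C^{i,j}(D) = \sum_i (-1)^i \dim {\mathcal H}^{i,j}(L)$. Summing over $j$ with weights $Q^j$ therefore reduces the claim to a chain-level computation. Since the graded dimension of $V$ is $Q + Q^{-1}$, that of $V^{\ten k_v}$ is $(Q+Q^{-1})^{k_v}$, and so the chain-level Euler characteristic factors as
\[
  (-1)^{n_-}\,Q^{\,n_+ - 2n_-}\sum_{v \in \{0,1\}^c} (-Q)^{\abs{v}}\,(Q+Q^{-1})^{k_v}.
\]

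Then I would identify this expression with the unnormalized Jones polynomial. The inner sum $\sum_v (-Q)^{\abs{v}}(Q+Q^{-1})^{k_v}$ is precisely the Kauffman-bracket state sum in the variable $Q$, while the prefactor $(-1)^{n_-}Q^{\,n_+-2n_-}$ is the writhe correction that promotes the bracket to a link invariant. The main obstacle is bookkeeping rather than conceptual: one must fix the grading conventions (the shifts by $n_+$ and $n_-$) so that this writhe normalization emerges correctly, and then reconcile the bracket variable with the variable $t$ in which $V_L$ is written here, namely $t = Q^2$ --- a step needing care because in this paper $V_L$ is obtained through the Hecke-algebra trace of Theorem \ref{traces} rather than through the Kauffman bracket. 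The cleanest way to close this gap is to verify that the displayed state sum, divided by $(Q+Q^{-1})$, satisfies the defining Jones skein relation and takes the value $1$ on the unknot, since these two properties characterize $V_L$; this identifies the state sum with $(Q+Q^{-1})V_L(Q^2)$ and completes the proof.
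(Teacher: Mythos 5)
You should first note that the paper contains no proof of this statement to compare against: it is quoted (statement only, with a terminal q.e.d.\ box) as Theorem 1.1 of Lee's paper, and the result is ultimately Khovanov's original categorification theorem. Your blind reconstruction is the standard argument underlying that citation, and it is correct in outline. The two structural points are right: the Khovanov differential preserves the quantum grading $j$, so rank--nullity applied to each finite-dimensional complex $C^{\bullet,j}(D)$ reduces the graded Euler characteristic to a chain-level count; and with Khovanov's shifts ($i=\abs{v}-n_-$, internal degree shifted by $\abs{v}+n_+-2n_-$) that count is exactly $(-1)^{n_-}Q^{\,n_+-2n_-}\sum_{v}(-Q)^{\abs{v}}(Q+Q^{-1})^{k_v}$, as you display. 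Your final step --- dividing by $(Q+Q^{-1})$ and pinning down the result by the skein relation together with the unknot normalization --- is also the right way to connect with this paper specifically, since here $V_L$ is defined through the Hecke-algebra trace of Theorem \ref{traces}, and Jones's construction is characterized by exactly those two properties.

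One convention will bite you when you actually run the skein verification, so it is worth naming. The normalized state sum satisfies $Q^{-2}\hat V(L_+)-Q^{2}\hat V(L_-)=(Q^{-1}-Q)\hat V(L_0)$, which is the Jones relation $t^{-1}V_+-tV_-=(t^{1/2}-t^{-1/2})V_0$ under the substitution $t^{1/2}=-Q$, not $t^{1/2}=+Q$. (A quick check on the positive Hopf link, whose normalized state sum is $Q^5+Q$ while $V=-t^{5/2}-t^{1/2}$, confirms the sign.) For knots --- in particular every $W(3,n)$ --- the Jones polynomial involves only integer powers of $t$, so the substitution $t=Q^2$ is unambiguous and your identification stands as written; for links with an even number of components the half-integer powers make the choice of square root part of the statement, and Lee's formulation must be read with that convention. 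This is a bookkeeping caveat to address in your step (iii), not a gap in the approach.
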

 \begin{theorem}[Compare Theorem 1.4 and subsequent remarks from \cite{Lee_Endo04}] \label{Lee4}
   For an alternating knot $L$, its Khovanov invariants ${\mathcal H}^{i,j}(L)$ of 
degree difference $(1,4)$ are paired except in the $0$th cohomology group. 
\qed  \end{theorem}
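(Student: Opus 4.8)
The plan is to read the statement as an instance of the structure of Lee's deformation of rational Khovanov homology, combined with the thinness provided by Theorem~\ref{locateKHLee}. Recall that Lee introduced a deformed differential on the rational Khovanov complex whose homology, the \emph{Lee homology} of a knot, is isomorphic to $\bQ^2$ with both generators concentrated in cohomological degree $0$. Filtering the Lee complex by the quantum grading yields a spectral sequence whose $E_1$-page is the rational Khovanov homology $\mathcal{H}^{*,*}(L)$ and which converges to Lee homology; its $r$-th differential $d_r$ has bidegree $(1,4r)$. In particular $d_1$ has bidegree $(1,4)$, exactly the degree difference named in the statement, so the theorem is a statement about the behaviour of this first differential.

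First I would record how a $(1,4r)$ shift interacts with the two support lines $j = 2i - \sigma(L) \pm 1$ from Theorem~\ref{locateKHLee}. A class on the lower line sits at $(i, 2i - \sigma - 1)$, and $d_r$ sends it to bidegree $(i+1, 2i - \sigma - 1 + 4r) = (i+1, 2(i+1) - \sigma + (4r - 3))$. For $r = 1$ this is $2(i+1) - \sigma + 1$, which lies on the upper line at cohomological degree $i+1$; thus $d_1$ carries the lower diagonal into the upper diagonal. For $r \geq 2$ we have $4r - 3 \geq 5$, so the target lies strictly off both support lines, forcing $d_r = 0$. The same bookkeeping shows $d_1$ vanishes on the upper line (its image would land at $2(i+1) - \sigma + 3$, off support). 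Hence only $d_1$ can be nonzero, and the spectral sequence collapses at the $E_2$-page.

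Consequently $E_2 = E_\infty$ is the homology of the cochain complex $(\mathcal{H}^{*,*}(L), d_1)$, and this must agree with Lee homology, namely $\bQ^2$ concentrated in cohomological degree $0$. A finite-dimensional complex over $\bQ$ whose homology is concentrated in a single degree splits as a direct sum of its homology together with two-term acyclic pieces $0 \to V \xrightarrow{\;\cong\;} V' \to 0$; each such piece is precisely a $d_1$-pairing of a generator in bidegree $(i,j)$ with one in bidegree $(i+1, j+4)$, i.e.\ a knight-move pair. Since the homology lives only in degree $0$, every generator of $\mathcal{H}^{i,j}(L)$ with $i \neq 0$ belongs to one such pair, while the only unpaired classes are the two Lee survivors in $\mathcal{H}^0$. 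This is the asserted $(1,4)$-pairing outside the $0$th cohomology group.

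The main obstacle I anticipate is not in the argument itself but in the input I would borrow rather than reprove: that Lee homology of a knot is exactly two-dimensional and concentrated in cohomological degree $0$, with its surviving classes identified in the correct quantum degrees ($j = -\sigma \pm 1$, using $s = -\sigma$ for alternating knots). Granting that, everything reduces to the bidegree bookkeeping above, where thinness (Theorem~\ref{locateKHLee}) is what kills all the higher differentials; the only delicate point is verifying that the two survivors genuinely sit in $\mathcal{H}^0$, so that no off-diagonal or higher-homological class is left unaccounted for among the knight-move pairs.
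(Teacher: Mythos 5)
Your argument is correct, but note that the paper does not prove this statement at all: it is quoted from Lee's paper (Theorem 1.4 of \cite{Lee_Endo04}) with an immediate \verb|\qed|, so the only proof on record is Lee's own. What you have written is essentially a faithful reconstruction of that proof — the quantum-filtration spectral sequence from Khovanov homology to Lee homology (which is $\bQ^2$ in homological degree $0$ for a knot), with thinness from Theorem \ref{locateKHLee} killing every differential except the $(1,4)$ one from the lower support line to the upper, so that the splitting of a complex over a field into homology plus two-term acyclic pieces yields exactly the knight-move pairing with the two unpaired survivors in $\mathcal{H}^{0}$.
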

This fact may be expressed in terms of the polynomial $Kh(L)$, as follows. 
There is another polynomial $Kh'(L)$ in one variable and an  equality 
 \begin{equation} \label{polynomialperiodicity}
   Kh(L)(t, Q) = Q^{-\sigma(L)}\bigl\{ (Q^{-1}{+}Q) + (Q^{-1}+tQ^2\cdot Q)\cdot Kh'(L)(tQ^2) \bigr\}
 \end{equation}
When we combine theorems \ref{Lee1} and \ref{Lee4}, we find that the bi-graded Euler characteristic and
the Jones polynomial of an alternating link determine one another.  Obviously, the equality \eqref{JonesfromKhovanov}
shows that one knows $V_L$ if one knows $Kh(t,Q)$. 

To obtain $Kh(t,Q)$ from $V_L(Q^2)$ requires a certain amount of manipulation.  Implementing these 
manipulations in {\em Maple} and {\em Mathematica} is an important step in our experiments.
 Setting $t{=}-1$ in \eqref{polynomialperiodicity} and combining with equation \eqref{JonesfromKhovanov}, one has
\begin{align*}
  (Q^{-1} + Q) \cdot V_L(Q^2) &= Q^{-\sigma(L)}\bigl\{ (Q^{-1}{+}Q) + (Q^{-1}- Q^3)\cdot Kh'(L)(-Q^2) \bigr\}.
\intertext{Consequently,}
 V_L(Q^2) &= Q^{-\sigma(L)}\bigl\{ 1 + \frac{(Q^{-1}- Q^3)}{(Q^{-1}{+}Q)}\cdot Kh'(L)(-Q^2) \bigr\}
\\
&= Q^{-\sigma(L)}\bigl\{ 1 + (1 - Q^2)\cdot Kh'(L)(-Q^2)\bigr\}.
\intertext{Furthermore,}
  Q^{\sigma(L)} \cdot V_L(Q^2) -1 &= (1 - Q^2)\cdot Kh'(L)(-Q^2),
\intertext{or}
Kh'(L)(-Q^2) &= (1 - Q^2)^{-1}\cdot \bigl(Q^{\sigma(L)} \cdot V_L(Q^2) -1\bigr) .
\end{align*}
Replacing $Q^2$ in the last equation by $-tQ^2$ is the last step to obtain $Kh'(L)$ from the Jones polynomial.
Within a computer algebra system, one must first replace $Q^2$ by $-X$ and then replace $X$ by $tQ^2$.  Once one has
$Kh'(L)(tQ^2)$, one obtains $Kh(t,Q)$ directly from equation~\eqref{polynomialperiodicity}.
\begin{example}
  We have computed $V_{W(3,2)}(t) = t^{-2} - t^{-1} + 1 - t + t^2$ in example \ref{jonesfig8knot}, so
  \begin{align*}
    Kh'\bigl(W(3,2)\bigr)(-Q^2) &= (1- Q^2)^{-1} \cdot \bigl( Q^0 \cdot ( Q^{-4} -  Q^{-2} - Q^2 + Q^4) \bigr)
\\
  &= (1-Q^2)^{-1} \cdot \bigl( (1 - Q^2) \cdot ( Q^{-4} - Q^2) \bigr)
\\
&= Q^{-4} - Q^2.
  \end{align*}
It follows that $Kh'\bigl( W(3,2) \bigr) (tQ^2) = t^{-2}Q^{-4} + tQ^2$, and 
\begin{multline*}
  Kh\bigl(W(3,2)\bigr) (t, Q) = (Q+Q^{-1})+ (Q^{-1} + tQ^3)(t^{-2}Q^{-4} + tQ^2)
\\
 = t^{-2}Q^{-5} + t^{-1}Q^{-1} + Q^{-1} + Q + tQ + t^2Q^5.
\end{multline*}
\end{example}
\section{Khovanov homology examples}  \label{Khovanov}
Once one has the Khovanov polynomial one can make a plot of the Khovanov
homology in an $(i,j)$-plane as in this example. The Betti number $\dim {\mathcal H}^{i,j}\bigl( W(3,11) \bigr)$ is plotted
at the point with coordinates $(i,j)$.
\begin{figure}[h]
  \centering
  \includegraphics[width=6in]{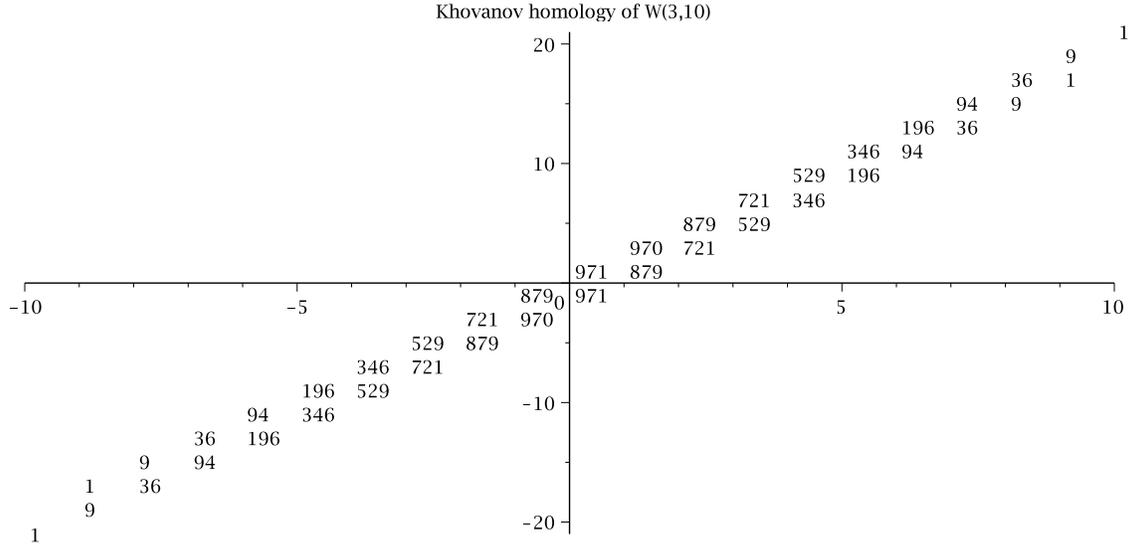}
  \caption{Khovanov homology of $W(3,10)$}
  \label{fig:w310}
\end{figure}
Clearly, as $n$ gets larger, it is going to be harder to make sense of such
plots. Notice that the ``knight move'' $(1,4)$-periodicity
of the Khovanov homology for these knots essentially makes the information on one of the lines $j-2i = \pm 1$ redundant. 

Before we continue to explore Khovanov homology with rational coefficients,
we observe we can also compute  the integral Khovanov homology.   
By Corollary 5 of \cite{Khovanovtorsion} there is only torsion of order 2 in the integral Khovanov homology. 
Even better, there are rules for calculating the number of $\bZ/2\bZ$-summands
present.  We have already noted how  rational Khovanov homology spaces are related by so-called ``knight moves.''  
Except when one lands or starts in a space ${\mathcal H}^{0,*}\bigl(W(3,n)\bigr)$, a move from 
${\mathcal H}^{i,j}\bigl(W(3,n)\bigr)$ to ${\mathcal H}^{i+1,j+4}\bigl(W(3,n)\bigr)$ is a move from one space 
into another space of the same dimension.  
If one reduces the dimensions of the spaces ${\mathcal H}^{0,*}\bigl(W(3,n)\bigr)$ by one, then this phenomenon persists without
qualification on the bidegrees.  Having made this adjustment, 
Shumakovitch \cite[1.G~Definitions]{Khovanovtorsion} provides the following rules for 
computing the  integral Khovanov homology: 
If  $\dim {\mathcal H}^{i,2i+1}\bigl(W(3,n)\bigr) = a \neq 0$, then 
${\mathcal H}^{i,2i+1}\bigl(W(3,n); \bZ \bigr)$ is torsion-free of rank $a$.
Moreover, for every non-zero pair linked by a knight move, 
${\mathcal H}^{i-1,2i-3}\bigl(W(3,n); \bZ \bigr)$ and ${\mathcal  H}^{i,2i+1}\bigl(W(3,n); \bZ \bigr)$ have the same 
rank, but groups along the line $j{-}2i = -1$ may have torsion.  
In fact,  the two-torsion part of ${\mathcal H}^{i,2i-1}\bigl(W(3,n)\;\bZ \bigr)$ 
is an abelian $2$-group $(\bZ/2\bZ)^a$.  The table \ref{IKHW34} shows the rules in operation
for the knot $W(3,4)$, also identified as 8\_18 in standard knot tables. Integer entries along 
the line $j{-}2i = 1$ indicate
the ranks of free abelian groups and an entry $r, a_2$ along the line
$j {-} 2i= -1$ indicates a free abelian group of rank $r$ summed with a 2-group $(\bZ/2\bZ)^a$.
\begin{table}
\caption{Integral Khovanov homology of $W(3,4)$}\label{IKHW34}
\begin{center}
  \begin{tabular}[h]{||c|c|c|c|c|c|c|c|c|c||}  \hline \hline 
    & -4 & -3 & -2 & -1 & 0 & 1 &2 & 3 & 4
\\ \hline
9  &  &  &    &    &    &   &   &  &   1
\\ \hline
7  &  &  &    &    &    &   &   & 3  &  $1_2$
\\  \hline
5  &  &  &  &      &    &    & 3  & 1, $3_2$  &    
\\ \hline
3 &  &  &   &      &    &  4  & 3, $3_2$  &   &    
\\ \hline
1 &  &  & & & 5  & 3 ,$4_2$  &   &   &   
\\  \hline
-1 &  &  &  & 3 & 5, $4_2$   &    &   &   &    
\\ \hline
-3 & &  & 3  & 4, $3_2$ &    &    &   &   &    
\\  \hline
-5 & & 1 &  3, $3_2$ & &    &    &   &   &  
\\  \hline
-7 &  & 3, $1_2$ &  &    &    &    &   &   &  
\\  \hline
-9 & 1  &  &  &    &    &    &   &   &  
\\  \hline \hline
  \end{tabular}
\end{center}
\end{table}

Returning to rational Khovanov homology,
we take advantage of the ``knight move'' periodicity and simplify by recording the Betti numbers from only along the line $j- 2i = 1$. 
In order to study the asymptotic behavior of Khovanov homology we have to normalize the data.  This is done
by computing the total rank of the Khovanov homology along the line and dividing each Betti number by the
total rank.  We obtain normalized Betti numbers that sum to one.  

This raises the possibility of approximating the distribution of normalized Betti numbers by a probability distribution.
For our baseline experiments we choose to use the normal $N(\mu, \sigma^2)$ probability density function
\begin{equation*}
  f_{\mu, \sigma^2}(x) = \frac{1}{\sigma \sqrt{2\pi}} \exp \Bigl( - \frac{(x-\mu)^2}{2\sigma^2} \Bigr) 
\end{equation*}
Fit a quadratic function 
$q_n(x)= -(\alpha \, x^2 - \beta\, x + \delta)$ 
to the logarithms of the normalized Khovanov dimensions along the line $j=2i+1$
and exponentiate the quadratic function. 
Since the total of the normalized dimensions is 1, we normalize the
exponential, obtaining 
\begin{equation*}
  \rho_n(x) = A_n e^{q_n(x)} \quad \text{satisfying} \quad \int_{-\infty}^{\infty} \rho_n(x) \; dx = 1.
\end{equation*}
To obtain a formula for $A_n$, complete the square
\begin{equation*}
  q_n(x) = -\alpha \cdot \bigl( x - (\beta/2\alpha) \bigr)^2 +\bigl( (\beta^2/4\alpha) - \delta\bigr).
\end{equation*}
Then consider
\begin{align*}
  1 &= A_n \int_{-\infty}^{\infty} \exp q_n(x) \; dx
\\
    &= A_n \cdot 
 \int_{-\infty}^{\infty} \exp \bigl((\beta^2/4\alpha) - \delta \bigr) 
        \cdot \exp \bigl( -\alpha \cdot \bigl( x -(\beta/2\alpha) \bigr)^2\bigr)  \; dx
\\
   &= A_n \cdot \exp \bigl((\beta^2/(4\alpha) - \delta \bigr)  \cdot
           \int_{-\infty}^{\infty} \exp \bigl( -\alpha \cdot \bigl( x -(\beta/2\alpha) \bigr)^2\bigr)  \; dx
\\
   &= A_n \cdot  \exp \bigl((\beta^2/4\alpha)  - \delta\bigr)  \cdot  \sqrt{\pi/\alpha}
\end{align*}
Thus, the expression for $A_n$ is 
\begin{equation*}
  A_n =  \exp\bigl( -\bigl((\beta^2/4\alpha) - \delta\bigr)\bigr) \cdot \sqrt{\alpha/\pi}.
\end{equation*}

Equate the expressions
\begin{equation*}
  \rho_n(x) = \frac{1}{\sigma_n \sqrt{2\pi}} \exp \Bigl( - \frac{(x-\mu_n)^2}{2\sigma_n^2} \Bigr) 
\quad \text{and} \quad
\rho_n(x) = A_n \exp ( q_n(x)),
\end{equation*}
and observe $\mu_n = \beta/2\alpha$ by equating the two expressions for the location of the local maximum
of $\rho_n(x)$. Then the efficient way to the parameter $\sigma_n$ is to solve the equation
\begin{equation*}
\frac{1}{\sigma_n \sqrt{2\pi}} = \rho_n(\beta/2\alpha) = A_n \exp( q_n(\beta/2\alpha)) 
    =  \exp\bigl( -\bigl((\beta^2/4\alpha) - \delta\bigr)\bigr) \cdot \sqrt{\alpha/\pi} \cdot
\exp\bigl( (\beta^2/4\alpha)  - \delta \bigr),
\end{equation*}
obtaining $\sigma_n = 1/ \sqrt{2\alpha} $.

Working this out for $W(3,10)$,  and carrying only 3 decimal places, 
the raw dimensions are
\begin{center}
  \begin{small}
     \begin{tabular}[h!]{c|c|c|c|c|c|c|c|c|c|c}
$i$  & -9 & -8 & -7 & -6 &  -5 &  -4 & -3&    -2 &-1 &  0
\\
$\dim$  &    1& 9& 36& 94& 196& 346& 529& 721& 879& 970
\\
 $i$  & 1& 2& 3& 4& 5& 6& 7& 8& 9& 10  
\\
$\dim$  & 971& 879& 721& 529& 346& 196& 94& 36& 9& 1 
  \end{tabular}
  \end{small}
\end{center}
and, to three significant digits, the logarithms of the normalized dimensions are
\begin{center}
  \begin{small}
    \begin{tabular}[h!]{c|c|c|c|c|c|c|c|c|c|c}
$i$  & -9 & -8 & -7 & -6 &  -5 &  -4 & -3&    -2 &-1 &  0
\\
    &  -17.9& -15.7& -14.3& -13.3& -12.6& -12.0& -11.6& -11.3& -11.1& -11.0
\\
 $i$  & 1& 2& 3& 4& 5& 6& 7& 8& 9& 10  
\\
& -11.0& -11.1& -11.3& -11.6& -12.0& -12.6& -13.3& -14.3& -15.7& -17.9                 
\end{tabular}
  \end{small}
\end{center}
Fitting a quadratic to this information,  we get
\begin{equation*}
          q_{10}(x) =  -10.7 + 0.0720\, x - 0.0720\, x^2,\quad \alpha = \beta =   0.0720, \quad \delta =     10.7.
\end{equation*}
To three significant digits $\mu_{10} = 0.500$ and $\sigma_{10} = 2.64$.

By the symmetry of Khovanov homology, the mean $\mu_n$ approaches $1/2$ rapidly, so this parameter is of little interest.
On the other hand, relating the parameter $\sigma_n$  to some geometric quantity, say, some hyperbolic invariant 
of the complement of the link, is a very interesting problem. 

For $W(3,10)$, the density function is
\begin{multline*}
  \rho_{10}(x) = 11686.8431618280538\,\sqrt {{\pi }^{-1}}  \\
\cdot \exp ({- 10.7018780565714309+ 0.0716848579220777243\,x- 0.0716848579220778631\,{x}^{2}})
\end{multline*}
When placed into standard form, $\mu_{10} =0.5000054030 $ and $\sigma_{10} = 2.640882970 $.
Figure \ref{fig:w310comp} compares the plot of the density function $\rho_{10}$ with a point plot of normalized dimensions.
\begin{figure}[h]
  \centering
  \includegraphics[height=2.0in]{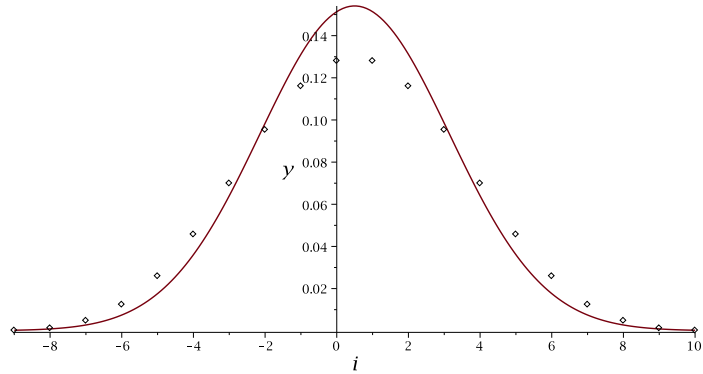}
  \caption{normalized homology of $W(3,10)$ compared with density function}
  \label{fig:w310comp}
\end{figure}
\newline
For the knot $W(3,11)$ 
 the expression for the density function is
\begin{multline*}
  \rho_{11}(x) =29676.8676257830375\,\sqrt {{\pi }^{-1}} \\
\cdot \exp ({- 11.6724860231789886+ 0.0661625395821569817\,x- 0.0661623073574252735\,{x}^{2}})
\end{multline*}
When placed into standard form, $\mu_{11} =0.5000017550 $ and $\sigma_{11} = 2.749031276$.
Figure \ref{fig:w311comp} compares the plot of the density function $\rho_{11}$ with a point plot of normalized dimensions.
\begin{figure}[h!]
  \centering
  \includegraphics[height=2.0in]{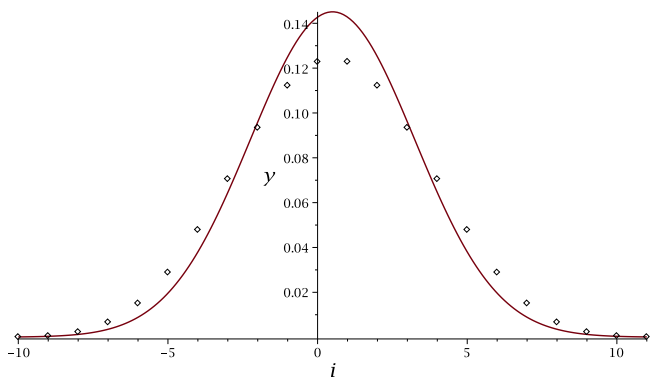}
  \caption{normalized homology of $W(3,11)$ compared with density function}
  \label{fig:w311comp}
\end{figure}
\newline
For $W(3,22)$, the density function is
\begin{multline*}
  \rho_{22}(x) =833596689.149608016\,\sqrt {{\pi }^{-1}} \\
\cdot \exp ({- 22.2219365040983057+ 0.0353061029354434300\,x- 0.0353061029347388616\,{x}^{2}})
\end{multline*}
When placed into standard form, $\mu_{22} =0.500000000 $ and $\sigma_{22} = 3.763224354$.
Figure \ref{fig:w322comp} compares the plot of the density function $\rho_{22}$ with a point plot of normalized dimensions.
\begin{figure}[h!]
  \centering
  \includegraphics[height=2.0in]{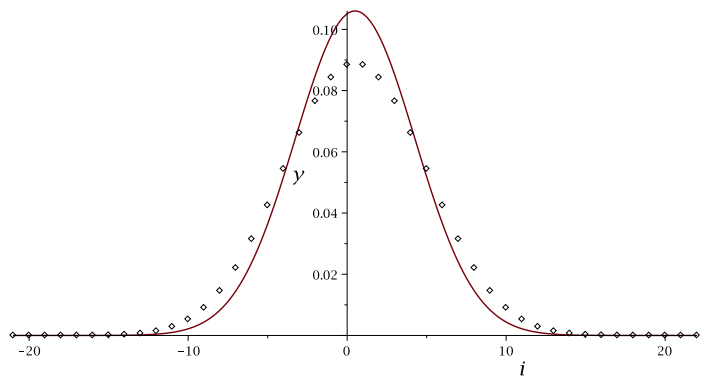}
  \caption{normalized homology of $W(3,22)$ compared with density function}
  \label{fig:w322comp}
\end{figure}
\newline
For $W(3,23)$, the density function is
\begin{multline*}
  \rho_{23}(x) = 2113964949.23002362\,\sqrt {{\pi }^{-1}} \\ 
\cdot exp( {- 23.1731352596503442+ 0.0338545815354610105\,x- 0.0338545815348441914\,{x}^{2}})
\end{multline*}
When placed into standard form, $\mu_{23} =0.5000000000 $ and $\sigma_{23} =3.843052143 $.
Figure \ref{fig:w323comp} compares the plot of the density function $\rho_{23}$ with a point plot of normalized dimensions.
\begin{figure}[h!]
  \centering
  \includegraphics[height=2.0in]{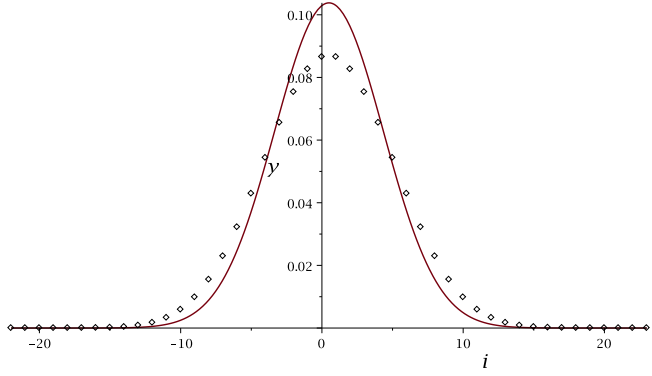}
  \caption{normalized homology of $W(3,23)$ compared with density function}
  \label{fig:w323comp}
\end{figure}
\newline
{\em Maple} worksheets and, later, {\em Mathematica} notebooks will be available at URL \cite{software} prepared by the second-named author.
\section{Data Tables}\label{Data}
This section contains tables of data generated using {\em Maple} to implement some of the results of earlier sections.
The first table collects data for weaving knots $W(3,n)$ with $n \equiv 1 \mod 3$; the second table does the same for
weaving knots $W(3,n)$ with $n \equiv 2 \mod 3$. 
In each table the first column lists the value of $n$; 
the second column lists the total dimension of the Khovanov homology lying along the line $j = 2i{+}1$; and the third
column lists the dimension of the vector space ${\mathcal H}^{0,1}\bigl( W(3,n)\bigr)$. 
Columns four and five display measures of the deviation of the proposed normal distributions from 
the actual distribution of normalized dimensions.

In section \ref{Khovanov} we have approximated a distribution of normalized
Khovanov dimenstions by a standard normal distribution, and we have displayed
graphics comparing an actual distribution with its approximation. 
To quantify those visual impressions, we compute and tabulate an $L^1$- and an $L^2$-deviation.  Let 
\begin{equation*}
  \text{Total dimension} = \sum_{i=-2n}^{2n+1} \dim {\mathcal H}^{i, 2i+1}\bigl( W(3,n) \bigr).
\end{equation*}
For the $L^2$-comparison, we compute
\begin{equation*}
 \Biggl(  \sum_{i = -2n}^{2n+1} \biggl( \rho_n(i)- \frac{\dim {\mathcal H}^{i, 2i+1}\bigl( W(3,n) \bigr)}{\text{Total dimension}} \biggr)^2 \Biggr)^{1/2}
\end{equation*}
For the $L^1$-comparison, we compute 
\begin{equation*}
   \sum_{i = -2n}^{2n+1} \Babs{ \rho_n(i) - \frac{\dim {\mathcal H}^{i, 2i+1}\bigl( W(3,n) \bigr)}{\text{Total dimension} } }
\end{equation*}
The $L^2$ comparisons appear to tend to 0, whereas the $L^1$ comparisons
appear to be growing slowly.  
\begin{table} \caption{Data for $W(3,n)$ with $n \equiv 1 \mod 3$}
  \begin{tabular}[h!]{|c|c|c|c|c|c|}
    $n$ & Total dimension & $\dim {\mathcal H}^{0,1}$  & $\sigma$  &  $L^2$-comparison & $L^1$ comparison
\\
    10 & 7563 &   970 & 2.64088  &   0.040510 &  0.134828
\\
    13 & 135721 & 15418 &  2.95616 & 0.041133 & 0.150599
\\
    16 & 2435423 &  250828 & 3.24564 &  0.040792 & 0.155995
\\
    19  & 43701901 & 4146351 & 3.51339 & 0.040145 & 0.161336
\\
    22  & 784198803 & 69337015 &  3.76322 &  0.039413 & 0.165763
\\
    25  & 14071876561 &  1169613435 & 3.99810 & 0.038678 & 0.167576
\\
    28  &  252509579303 & 19864129051 & 4.22032  & 0.037971 & 0.167790
\\
    31  &  4531100550901 & 339205938364 & 4.43167 & 0.037303 & 0.170736
\\
    34  &  81307300336923 & 5818326037345 & 4.63358 & 0.036676 & 0.172392
\\
    37  & 1459000305513721 &  100173472277125 & 4.82719 & 0.036089 & 0.173119
\\
    40  & 26180698198910063  & 1730135731194046  & 5.01342 & 0.035541 & 0.173178
\\ 
    43  & 469793567274867421 &  29963026081609060  &  5.19306 &  0.035028 & 0.173812
\\
    46  & 8430103512748703523 & 520131503664409798 & 5.36674 & 0.034546 & 0.175052
\\
    49  & $1.51272\cdot 10^{20}$ & $ 9.04765\cdot10^{18}$  &  5.53502 & 0.034093 & 0.175779
\\
    52  &  $ 2.71447\cdot 10^{21}$  &  $ 1.57670\cdot 10^{20} $ & 5.69838 & 0.033667 & 0.176100
\\
    55  &  $ 4.87091\cdot 10^{22} $ & $ 2.75210\cdot 10^{21} $ & 5.85721 & 0.033265 & 0.176098
\\
    58  &  $ 8.74050\cdot 10^{23} $ & $ 4.81071\cdot 10^{22} $   & 6.01187 & 0.032885 & 0.175898
\\
    61  &  $ 1.56842\cdot 10^{25} $& $  8.42017\cdot 10^{23}$ & 6.16267 & 0.032524 & 0.176778
\\
    64  &  $ 2.81441\cdot 10^{26} $ & $ 1.47552\cdot 10^{25} $ & 6.30989 & 0.032182 & 0.177369
\\
    67  &  $5.05026\cdot 10^{27}$  & $ 2.58843\cdot 10^{26} $ & 6.45376 & 0.031857 & 0.177716
\\
    70  &$ 9.06233\cdot 10^{28} $& $4.54520\cdot 10^{27}$ & 6.59451 & 0.031547 & 0.177859
\\
    73  & $1.62617\cdot 10^{30}$ & $7.98842\cdot 10^{28}$ & 6.73233 & 0.031251 & 0.177831
\\
    76  & $ 2.91804\cdot 10^{31}$ & $1.40517 \cdot 10^{30}$ & 6.86740 & 0.030968 & 0.177657
\\
    79  &  $5.23621\cdot 10^{32}$ & $ 2.47359 \cdot 10^{31}$ & 6.99986 & 0.030697 & 0.177995
\\
    82  & $9.39600\cdot 10^{33} $&  $4.35747 \cdot 10^{32}$& 7.12988 & 0.030437 & 0.178445
\\
    85  &  $1.68604\cdot 10^{35} $&  $7.68116 \cdot 10^{33}$ & 7.25757 & 0.030188 & 0.178746
\\
    88  & $ 3.02548\cdot 10^{36} $ &  $1.35483 \cdot 10^{35}$ & 7.38305 & 0.029948 & 0.178918
\\
    91  &  $ 5.42901\cdot 10^{37} $ &  $  2.39106 \cdot 10^{36} $ & 7.50645 & 0.029718 & 0.178976
\\
   94  & $ 9.74196\cdot 10^{38}$ & $ 4.22211 \cdot 10^{37} $ & 7.62786 & 0.029496 & 0.178935
\\
    97 &  $ 1.74812\cdot 10^{40} $ & $7.45910  \cdot 10^{38}$ & 7.74736 & 0.029282 & 0.178807
\\
100  & $3.13688\cdot 10^{41} $ &  $1.31840   \cdot 10^{40} $& 7.86506 & 0.029075 & 0.178890
\\
121  &  $1.87923\cdot 10^{50} $&   $ 7.18477\cdot 10^{48} $ & 8.64424 & 0.027805 & 0.179577
\\
142  &  $1.12580\cdot 10^{59} $ & $ 3.97500 \cdot 10^{57} $ & 9.35886& 0.026769& 0.180247
\\
163  & $6.74436\cdot 10^{67} $   &  $2.22337 \cdot 10^{66}$ & 10.0227& 0.025900& 0.180596
\\
184  & $4.04037\cdot 10^{76}$ & $ 1.25398 \cdot 10^{75} $ & 10.6453& 0.025156& 0.180629
\\
205  & $ 2.42049\cdot 10^{85}$ &  $7.11854 \cdot 10^{83} $ & 11.2334& 0.024508& 0.180907
\\
247 & $8.68689\cdot 10^{102}$ & $2.32816 \cdot 10^{101} $ & 12.3258& 0.023423& 0.181027
\\
289 & $3.11764\cdot 10^{120} $ & $7.72623 \cdot 10^{118} $ & 13.3289& 0.022542& 0.181268
  \end{tabular}
\end{table}
\begin{table} \caption{Data for $W(3,n)$ with $n \equiv 2 \mod 3$}
\begin{tabular}{c|c|c|c|c|c}
    $n$ & Total dimension & $\dim {\mathcal H}^{0,1}$  & $\sigma$  &    $L^2$-comparison & $L^1$ comparison
\\
    11 & 19801&2431& 2.74903& 0.040906&  0.141925
\\
  14&355323&38983& 3.05533& 0.041079&  0.153170
\\
 17&6376021&637993& 3.33710& 0.040595& 0.156595
\\
20&114413063&10591254& 3.59850& 0.039905& 0.163190
\\
23&2053059121&177671734& 3.84305& 0.039166& 0.166596
\\
26&36840651123&3004390818& 4.07348& 0.038438& 0.167789
\\
29&661078661101&51124396786& 4.29190& 0.037744& 0.168941
\\
 32&11862575248703&874400336044& 4.49997 & 0.037089& 0.171411
\\
35&212865275815561&15018149469823& 4.69899& 0.036476& 0.172723
\\
38&3819712389431403&258853011125599& 4.89004& 0.035903& 0.173203
\\
41&68541957733949701&4474997964407374& 5.07400& 0.035366& 0.173083
\\
44&1229935526821663223& 77563025486587315& 5.25158& 0.034864& 0.174290
\\
47&22070297525055988321& 1347390412214087833 & 5.42341 & 0.034392& 0.175346
\\
50& $ 3.96035\cdot 10^{20} $& $ 2.34525 \cdot 10^{19} $ & 5.59000& 0.033949& 0.175926
\\
53& $ 7.10657\cdot 10^{21} $& $ 4.08927 \cdot 10^{20} $& 5.75181 & 0.033531 & 0.176131
\\
56& $ 1.27522\cdot 10^{23} $& $ 7.14133\cdot 10^{21} $& 5.90921 & 0.033136 & 0.176037
\\
59& $ 2.28829\cdot 10^{24} $& $ 1.24888 \cdot 10^{23} $& 6.06255& 0.032763 & 0.176227
\\
62& $ 4.10617\cdot 10^{25} $& $ 2.18679\cdot 10^{24} $& 6.21213& 0.032408& 0.177005
\\
65& $ 7.36823\cdot 10^{26} $& $ 3.83347\cdot 10^{25} $& 6.35821& 0.032072& 0.177510
\\
68& $ 1.32218\cdot 10^{28} $& $ 6.72713\cdot 10^{26} $& 6.50102& 0.031752& 0.177785
\\
71& $ 2.37255\cdot 10^{29} $& $ 1.18163\cdot 10^{28} $& 6.64077 & 0.031446 & 0.177867
\\
74& $ 4.25736\cdot 10^{30} $& $ 2.07736\cdot 10^{29} $& 6.77765 & 0.031155 & 0.177787
\\
77& $ 7.63953\cdot 10^{31} $& $ 3.65504 \cdot 10^{30} $& 6.91183 & 0.030876 & 0.177602
\\
80& $ 1.37086\cdot 10^{33} $& $ 6.43571 \cdot 10^{31} $& 7.04347 & 0.030609 & 0.178163
\\
83& $ 2.45990\cdot 10^{34} $& $ 1.13397 \cdot 10^{33} $& 7.17269 & 0.030353 & 0.178561
\\
86& $ 4.41412\cdot 10^{35} $& $ 1.99933 \cdot 10^{34} $& 7.29963 & 0.030107 & 0.178817
\\
89& $ 7.92082\cdot 10^{36} $& $ 3.52717\cdot 10^{35} $& 7.42441 & 0.029871 & 0.178949
\\
92& $ 1.42133\cdot 10^{38} $& $ 6.22605\cdot 10^{36} $& 7.54714 & 0.029643& 0.178972
\\
95& $ 2.55048\cdot 10^{39} $& $ 1.09958\cdot 10^{38} $& 7.66790 & 0.029424 & 0.178901
\\
98& $ 4.57665\cdot 10^{40} $& $ 1.94290\cdot 10^{39} $& 7.78679 & 0.029212 & 0.178747
\\
 119& $ 2.74175\cdot 10^{49} $& $ 1.05696\cdot 10^{48} $& 8.57308& 0.027914& 0.179650
\\
140& $ 1.64251\cdot 10^{58} $& $ 5.84051 \cdot 10^{56} $& 9.29316& 0.026859& 0.180257
\\
161& $ 9.83989\cdot 10^{66} $& $ 3.26385\cdot 10^{65} $& 9.96138& 0.025977& 0.180552
\\
182& $ 5.89483\cdot 10^{75} $& $ 1.83951\cdot 10^{74} $& 10.5875& 0.025223& 0.180539
\\
203& $ 3.53144\cdot 10^{84} $& $ 1.04367\cdot 10^{83} $& 11.1787& 0.024566& 0.180926
\\
245& $ 1.26740\cdot 10^{102} $& $ 3.41053\cdot 10^{100} $& 12.2759& 0.023469& 0.181064
\\
287& $ 4.54858\cdot 10^{119} $& $ 1.13115\cdot 10^{118} $& 13.2829& 0.022580& 0.181221
\\
329& $ 1.63244\cdot 10^{137} $& $ 3.79224\cdot 10^{135} $& 14.2187& 0.021838& 0.181399
\end{tabular}  
\end{table}
\newpage
\section{More on  Polynomials} \label{PolynomialsExtra}
First we  collect basic values of polynomials $C_{n,*}(q)$, some of which are
referred to in sections  \ref{Hecke}, \ref{Polys}, and \ref{TwistNumbersVolume}.  
Recall the initilization values:
\begin{equation*} 
  C_{1,0}(q) = 0, \; C_{1,1}(q) = -(q{-}1), \; C_{1,2}(q) = 0, \; C_{1,12}(q) = 1,
                   \;  C_{1,21}(q) = 0, \; \text{and} \; C_{1,121}(q) = 0.
\end{equation*}
\begin{align*}
    C_{2,0}(q) &=  q(q{-}1)^2 = q - 2\,q^2 + q^3   &   C_{3,0}(q) &= -q + 4\, q^2 - 5\,q^3 + 4\,q^4 - q^5   
\\
  C_{2,1}(q) &= (q{-}1)^3 =  - 1 + 3\, q  - 3\,q^2 +  q^3   &   C_{3,1}(q) &= 1 - 4\, q + 7\,q^2 - 7q^3 + 4\, q^4 - q^5 
\\
  C_{2,2}(q) &= -q(q{-}1) = q - q^2     &   C_{3,2}(q) &= - q + 3\, q^2 - 3\, q^3 + q^4  
\\
  C_{2,12}(q) &= -(q{-}1)^2 =  -1 + 2\,q -q^2   &   C_{3,12}(q) &= 1 - 3\, q + 4\,q^2 - 3\,q^3 + q^4  
\\
  C_{2,21}(q) &= q    &    C_{3,21}(q) &= -q + 2\,q^2 - q^3  
\end{align*}
\begin{align*}
  C_{4,0}(q) &= q- 5\,q^2+ 10 \,q^3 -12\,q^4 +10\,q^5 - 5\,q^6 + q^7
\\
  C_{4,1}(q) &= -1 + 5\,q - 11 \, q^2 + 16\, q^3 - 16\, q^4 + 11\, q^5 - 5 \, q^6 + q^7
\\
  C_{4,2}(q) &= q - 4\,q^2 + 7\,q^3 - 7\,q^4 + 4\,q^5 - q^6
\\
  C_{4,12}(q) &= -1 + 4\,q - 7\,q^2 + 9\,q^3 - 7\,q^4 + 4 \,q^5 - q^6
\\
  C_{4,21}(q) &= q - 3\,q^2 + 4\,q^3 - 3 \, q^4 + q^5
\end{align*}
  \begin{table}[h!]
   \caption{Alexander polynomials for $W(3,n)$}
    \label{alexpolys}
    \centering  \renewcommand{\arraystretch}{1.25}
    \begin{tabular}{|c|c|} \hline
      $n$ & $\Delta_{W(3, n)}(t)$
\\ \hline
4 &  $-t^{3}+5\,t^{2}-10\,t+13-10\,t^{-1}+5\,t^{-2}-t^{-3}$
\\ \hline
5 &  $t^{4}-6\,t^{3}+15\,t^{2}-24\,t+29-24\,t^{-1}+15\,t^{-2}-6\,t^{-3}+t^{-4}$
\\ \hline
10 &  $ -t^{9}+11\,t^{8}-55\,t^{7}+174\,t^{6}-409\,t^{5}+777\,t^{4} -1243\,t^{3}$
\\ 
   & $+1716\,t^{2}-2073\,t+2207-2073\,t^{-1}+1716\,t^{-2}$
\\
&  $-1243\,t^{-3}+777\,t^{-4}-409\,t^{-5}+174\,t^{-6}-55\,t^{-7}+11\,t^{-8}-t^{-9}$
\\ \hline
11 &  $t^{10}-12\,t^{9}+66\,t^{8}-230\,t^{7}+593\,t^{6}-1232\,t^{5}+2157\,t^{4}-3268\,t^{3}$
\\
   & $+4356\,t^{2}-5158\,t+5455 -5158\,t^{-1}+4356\,t^{-2}$
\\
   & $ -3268\,t^{-3}+2157\,t^{-4}-1232\,t^{-5} +593\,t^{-6}-230\,t^{-7}+66\,t^{-8}-12\,t^{-9}+t^{-10}$
\\ \hline
    \end{tabular}
   \end{table}
\begin{table}[h!]
  \caption{Jones polynomials for $W(3,n)$}
  \label{Jonesw3n}
  \centering   \renewcommand{\arraystretch}{1.25}
  \begin{tabular}{|c|c|} \hline
$n$  &   $V_{W(3,n)}(t)$ 
\\  \hline
4 &  $t^{4}-4\,t^{3}+6\,t^{2}-7\,t+9-7\,t^{-1}+6\,t^{-2}-4\,t^{-3}+t^{-4}$
\\   \hline
5 &  $-t^{5}+5\,t^{4}-10\,t^{3}+15\,t^{2}-19\,t+21-19\,t^{-1}+15\,t^{-2}-10\,t^{-3}+5\,t^{-4}-t^{-5}$
\\  \hline
10 &  $t^{10}-10\,t^{9}+45\,t^{8}-130\,t^{7}+290\,t^{6}-542\,t^{5}+875\,t^{4}$
\\  
&   $-1250\,t^{3}+1600\,t^{2}-1849\,t+1941-1849\,t^{-1} +1600\,t^{-2}-1250\,t^{-3}$ 
\\  
 &   $+875\,t^{-4}-542\,t^{-5}+290\,t^{-6}-130\,t^{-7}+45\,t^{-8}-10\,t^{-9}+t^{-10}$
\\ \hline
11 &  $-t^{11}+11\,t^{10}-55\,t^{9}+176\,t^{8}-429\,t^{7}+869\,t^{6}-1518\,t^{5}+2343\,t^{4}$
\\
 & $-3245\,t^{3}+4070\,t^{2}-4652\,t+ 4863-4652\,t^{-1}+4070\,t^{-2}-3245\,t^{-3}$
\\
 & $+2343\,t^{-4}-1518\,t^{-5}+869\,t^{-6}-429\,t^{-7}+176\,t^{-8}-55\,t^{-9}+11\,t^{-10}-t^{-11}$
\\ \hline
  \end{tabular}
\end{table}
\newpage
\begin{table}[h!]
  \caption{HOMFLY-PT polynomials for $W(3,n)$}
  \label{homflyptw3n}
  \centering  \renewcommand{\arraystretch}{1.25}
  \begin{tabular}{|c|c|}  \hline
    $n$  &   $H_{W(3,n)}(a, z)$ 
\\  \hline
4  &  $a^2\bigl(  {z}^{4}+{z}^{2}-1 \bigr)
   + \bigl(-{z}^{6}-3\,{z}^{4}-{z}^{2}+3   \bigr)
+ a^{-2}\bigl( {z}^{4}+{z}^{2}-1 \bigr)$
\\  \hline
5 & $ a^2\bigl( -{z}^{6}-2\,{z}^{4}+{z}^{2}+2  \bigr)   
 +  \bigl( {z}^{8}+4\,{z}^{6}+3\,{z}^{4}-4\,{z}^{2}-3\bigr)  
  + a^{-2}\bigl( -{z}^{6}-2\,{z}^{4}+{z}^{2}+2  \bigr)$
\\ \hline
10 & $a^2\bigl( {z}^{16}+7\,{z}^{14}+14\,{z}^{12}-2\,{z}^{10}-29\,{z}^{8}-11\,{z}^{6}+ 18\,{z}^{4}+6\,{z}^{2}-3  \bigr)$  
\\
   & $+  \bigl( -{z}^{18}-9\,{z}^{16}-28\,{z}^{14}-26\,{z}^{12}+33\,{z}^{10}+69\,{z}^{8}+4\,{z}^{6}-42\,{z}^{4}-9\,{z}^{2}+7\bigr)$
\\
   &$  + a^{-2}\bigl( {z}^{16}+7\,{z}^{14}+14\,{z}^{12}-2\,{z}^{10}-29\,{z}^{8}-11\,{z}^{6}+ 18\,{z}^{4}+6\,{z}^{2}-3  \bigr)$
\\ \hline
11 &  $a^2\bigl( -{z}^{18}-8\,{z}^{16}-20\,{z}^{14}-6\,{z}^{12}+40\,{z}^{10}+34\,{z}^{8}-25\,{z}^{6}-24\,{z}^{4}+6\,{z}^{2}+4  \bigr)   $
\\
 &  $+  \bigl( {z}^{20}+10\,{z}^{18}+36\,{z}^{16}+46\,{z}^{14}-28\,{z}^{12}-114\,{z}^{10}-43\,{z}^{8}+74\,{z}^{6}+42\,{z}^{4}-16\,{z}^{2}-7 \bigr) $
\\
 & $ + a^{-2}\bigl( -{z}^{18}-8\,{z}^{16}-20\,{z}^{14}-6\,{z}^{12}+40\,{z}^{10}+34\,{z}^{8}-25\,{z}^{6}-24\,{z}^{4}+6\,{z}^{2}+4  \bigr)$
\\ \hline
  \end{tabular}
\end{table}
\section{Notes on Computing} 
\label{ComputerNotes}
This file contains some remarks on the roles played by {\em Mathematica} and
{\em Maple} experiments in generating data, conjectures, and results.

Our initial interest was in the Khovanov homology of weaving knots, which we knew was determined in a straightforward 
manner by the Jones polynomials.  It also turns out that the Khovanov homology of our knots can be determined by knowing
half of the Khovanov homology, essentially.  Instead of having to keep track of a bigraded object, the study of Khovanov
homology of weaving knots is reduced to the study of a graded object.  We normalized our examples by dividing each dimension
in the graded object by the total dimension and plotted the results for a large number of the knots.  In the plots
bell-shaped curves appear as envelopes of the plots of the normalized dimensions.  First, this led us to conjecture 
that the standard deviations of the bell curves may be an interesting invariant for the family of knots $W(3, n)$. 

As mentioned, the Jones polynomial of a weaving knot $W(3,n)$ determines the two-variable Khovanov polynomial of
the bi-graded Khovanov homology.  To simplify matters, we studied the Jones polynomial on its own terms.  
We knew that the Jones polynomials have the form
\begin{equation*}
  V_{W(3,n)}(t) = \pm t^{-n} + \lambda_{-n+1}t^{-n+1} + \cdots \lambda_{n-1}t^{n-1} + \pm t^n,
\end{equation*}
so we conjectured that $\lambda_{-n+k} = \lambda_{n-k}$ is a polynomial
function of degree $k$ in $n$. The basis for this conjecture is the well-known binomial distribution approximating
the standard normal distribution.  To investigate
this conjecture further, we detoured through another round of experiments. 

During a visit to the University of Osnabr\"{u}ck in Germany, the second-named author was tutored in {\em Mathematica} 
by Prof.~Dr.~Karl-Heinz Spindler.  During the demonstrations of techniques for manipulating polynomials, Dr.~Spindler 
asked if we knew explanations of the patterns we were observing.  These questions led to the formulation and 
proof of the palindromic properties of the building  block polynomials $C_{n,-}(q)$ stated in theorem \ref{palindromes}.

For a large sample of computed Jones polynomials, we extracted the coefficients $\lambda_{-n+k}$ obtaining sequences of 
integers upon which {\em Mathematica} routines computed iterated differences.  In accordance with the conjectured 
behavior, we observed the differences vanishing after the expected number of iterations.  From these experiments
it was  possible to generate formulas for the numbers $\lambda_{-n+2}$ and $\lambda_{-n+3}$
eventually proved in theorems \ref{twistnumber2} and \ref{twistnumber3}.

One may also obtain expressions for 
two-variable HOMFLY-PT polynomial $H_{(W(3,n)}(a, z)$ normalized to $H({\rm Unknot})(a,z) = 1$.
This amounts to applying a different sequence of substitutions to the Hecke algebra output
$ V_{(\sigma_1\sigma_2^{-1})^n}(q,z) $ given in \eqref{Heckeoutput}.
Since we have no immediate use for these gadgets, we offer the brief table \ref{homflyptw3n}. 

Turning to the volume computations, we thank Ilya Kofman for a significant
improvement of our first script for computing volumes using SnapPy \cite{SnapPy}.  According to our 
data, the volume of the complement of $W(3,n)$ is growing roughly linearly with $n$, so it is not so 
surprising that the volume is strongly correlated with the higher twist numbers.   Another feature 
of the SnapPy data is that, although the volume is growing linearly, the number of simplices used 
by SnapPy to compute the volume is growing quite irregularly.  
\bibliographystyle{plain}
\bibliography{knotinvariantslist}
\end{document}